\title{On the Assouad dimension of projections}
\author{Tuomas Orponen}
\keywords{Projections, Assouad dimension, Fractals}
\address{University of Helsinki, Department of Mathematics and Statistics}
\subjclass[2010]{28A80 (Primary)}
\thanks{T.O. is supported by the Academy of Finland via the project \emph{Quantitative rectifiability in Euclidean and non-Euclidean spaces}, grant No. 309365.}
\email{tuomas.orponen@helsinki.fi}
\newcommand{\R}{\mathbb{R}}
\newcommand{\N}{\mathbb{N}}
\newcommand{\Z}{\mathbb{Z}}
\newcommand{\calT}{\mathcal{T}}
\newcommand{\calD}{\mathcal{D}}
\newcommand{\calH}{\mathcal{H}}
\newcommand{\calB}{\mathcal{B}}
\newcommand{\calG}{\mathcal{G}}
\newcommand{\calS}{\mathcal{S}}
\newcommand{\calR}{\mathcal{R}}
\newcommand{\spt}{\operatorname{spt}}
\newcommand{\Hd}{\dim_{\mathrm{H}}}
\newcommand{\Pd}{\dim_{\mathrm{p}}}
\newcommand{\diam}{\operatorname{diam}}
\newcommand{\card}{\operatorname{card}}
\newcommand{\Ad}{\dim_{\textsc{A}}}
\newcommand{\calM}{\mathcal{M}}
\numberwithin{equation}{section}
\theoremstyle{plain}
\newtheorem{thm}[equation]{Theorem}
\newtheorem{conjecture}[equation]{Conjecture}
\newtheorem{lemma}[equation]{Lemma}
\newtheorem{proposition}[equation]{Proposition}
\theoremstyle{definition}
\newtheorem{definition}[equation]{Definition}
\theoremstyle{remark}
\newtheorem{remark}[equation]{Remark}
\newcommand{\nref}[1]{(\hyperref[#1]{#1})}
\begin{document}

\begin{abstract} Let $F \subset \R^{2}$, and let $\Ad$ stand for Assouad dimension. I prove that $$\Ad \pi_{e}(F) \geq \min\{\Ad F,1\}$$ for all $e \in S^{1}$ outside of a set of Hausdorff dimension zero. This is a strong variant of Marstrand's projection theorem for Assouad dimension, whose analogue is not true for other common notions of fractal dimension, such as Hausdorff or packing dimension. \end{abstract}

\maketitle

\tableofcontents

\section{Introduction}

\subsection{The main result and previous work} For $F \subset \R^{2}$, let $\Ad F$ be the \emph{Assouad dimension} of $F$, see Definition \ref{Adim}. For $e \in S^{1}$, write $\pi_{e} \colon \R^{2} \to \R$ for the projection map $\pi_{e}(x) = x \cdot e$. Here is the main result of the paper:

\begin{thm}\label{main} Let $F \subset \R^{2}$. Then, 
\begin{displaymath} \Hd \{e \in S^{1} : \Ad \pi_{e}(F) < \min\{\Ad F,1\}\} = 0. \end{displaymath}
\end{thm}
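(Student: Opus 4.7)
The plan is to combine the weak-tangent characterisation of Assouad dimension with a Bourgain-type projection theorem possessing a Hausdorff-dimension-zero exceptional set. The key input is the Mackay--Tyson theorem: $\Ad F$ equals the supremum of $\Hd T$ over all weak tangents $T$ of $F$, where each $T$ arises as a Hausdorff limit of isotropic rescalings $\lambda_{n}(F - x_{n}) \cap B(0,1)$. Since orthogonal projection commutes with isotropic dilations, whenever $T$ is a weak tangent of $F$ the set $\pi_{e}(T)$ is a weak tangent of $\pi_{e}(F)$, so
\begin{displaymath} \Ad \pi_{e}(F) \geq \Hd \pi_{e}(T) \qquad (e \in S^{1}). \end{displaymath}

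Fix $\alpha < \min\{\Ad F, 1\}$ (to be sent to $\min\{\Ad F, 1\}$ along a countable sequence at the end) and choose $\alpha' \in (\alpha, \min\{\Ad F, 1\})$. Using the weak-tangent characterisation I would select $T$ with $\Hd T > \alpha'$ and, by Frostman's lemma, equip $T$ with a compactly supported probability measure $\mu$ satisfying $\mu(B(x,r)) \leq C r^{\alpha'}$. The heart of the argument is then a Bourgain-type projection theorem for $\mu$ of the form: for every $\kappa > 0$ there exists $\eta = \eta(\kappa, \alpha') > 0$, with $\eta \to 0$ as $\kappa \to 0^{+}$, such that
\begin{displaymath} \Hd\{e \in S^{1} : \Hd \pi_{e}\mu < \alpha' - \eta\} \leq \kappa. \end{displaymath}

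For $\kappa$ small enough that $\eta(\kappa, \alpha') < \alpha' - \alpha$, any $e$ with $\Ad \pi_{e}(F) \leq \alpha$ satisfies $\Hd \pi_{e}\mu \leq \Hd \pi_{e}(T) \leq \Ad \pi_{e}(F) \leq \alpha < \alpha' - \eta$, and therefore lies in the exceptional set above. Consequently $\Hd\{e : \Ad \pi_{e}(F) \leq \alpha\} \leq \kappa$; sending $\kappa \to 0^{+}$ and taking a countable union over $\alpha \nearrow \min\{\Ad F, 1\}$ completes the argument.

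The main obstacle is the Bourgain-type input described above: the classical Marstrand--Kaufman--Falconer bounds on projection exceptional sets are essentially sharp for Hausdorff dimension and do not yield dim-zero exceptional sets for arbitrary Frostman measures. What makes a dim-zero conclusion accessible at the level of Assouad dimension is precisely the freedom, granted by Mackay--Tyson, to replace $F$ by a weak tangent carrying a Frostman measure of exponent arbitrarily close to $\min\{\Ad F, 1\}$; on such a measure one can invoke a discretised projection theorem (after Bourgain, and its refinements by Shmerkin and others) and drive the exceptional-set dimension to zero by sending $\kappa \to 0^{+}$. I expect the technical heart of the paper to lie in deploying and/or strengthening such a discretised projection theorem and carefully tracking parameters through the $\kappa \to 0^{+}$ limit.
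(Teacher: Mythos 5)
The proposal correctly identifies the weak-tangent reduction as the right starting point, but the ``Bourgain-type projection theorem'' it invokes as the main technical input does not exist in the stated generality, and in fact is false for general Frostman measures. Your claimed input, after unwinding the quantifiers, implies that for any $\alpha'$-Frostman measure $\mu$ on $\R^{2}$ and any $s < \alpha'$ one has $\Hd\{e : \Hd\pi_{e}\mu < s\} = 0$. But Kaufman's bound $\Hd\{e : \Hd\pi_{e}(K) < s\} \leq s$ is essentially sharp by the Kaufman--Mattila construction (cited in the paper): there are compact sets $K$ of any dimension $t \in [0,1]$ for which the exceptional set has Hausdorff dimension equal to $t$, and measure versions of this example have exceptional sets of positive dimension. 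The Bourgain--Shmerkin--He discretised projection theorems for Frostman measures only give a gain over the trivial exponent $\alpha'/2$ outside a small exceptional set; they do not drive the projection dimension close to the ambient Frostman exponent.

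What rescues the argument in the paper is an ingredient your outline drops: the weak tangent produced by K\"aenm\"aki--Ojala--Rossi is not merely of Hausdorff dimension $\Ad F$ but also satisfies $\calH^{\Ad F}(E) > 0$ with $\Ad E = \Ad F$, so a Frostman measure on $E$ is \emph{$d$-quasiregular} --- its support has Assouad dimension matching the Frostman exponent. This two-sided control (an approximate $d$-regularity, Lemma \ref{lemma1}) is what separates the situation from the Kaufman--Mattila counterexamples and is exploited systematically throughout the paper: the blow-up $\nu$ is chosen so that $\pi_{e}(\nu)$ is near-exact-dimensional and dimension-conserving (in Furstenberg's sense) for $\sigma$-most $e$, the intersections $K_{e} \cap K_{e_{1}}$ are reorganised into a quasi-product measure $\mu_{h} \times \mu_{v}$, and only \emph{then} does Shmerkin's inverse theorem enter. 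In short, the projection theorem you are invoking as an off-the-shelf input is, for quasiregular measures, precisely what the paper sets out to prove; for general Frostman measures it is unavailable and cannot be made available.
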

Theorem \ref{main} improves on an earlier result of Fraser and the author \cite[Theorem 2.1]{FO}, where it was proven that
\begin{equation}\label{FOResult} \mathcal{H}^{1}(\{e \in S^{1} : \Ad \pi_{e}(F) < \min\{\Ad F,1\}) = 0. \end{equation}
To put Theorem \ref{main} into proper context, I briefly list below the main existing projection theorems concerning a general compact set $K \subset \R^{2}$, and I also discuss their sharpness. 
\subsubsection*{Hausdorff dimension} Let $\Hd$ stand for Hausdorff dimension. Marstrand in 1954, see \cite[Theorem II]{Mar}, proved that the set 
\begin{displaymath} E := E(K) := \{e \in S^{1} : \Hd \pi_{e}(K) < \min\{\Hd K,1\}\} \end{displaymath}
has $\calH^{1}(E) = 0$. In 1968, Kaufman \cite{Ka} improved this in the case $\Hd K < 1$ by showing that $\Hd E \leq \Hd K$. Kaufman's bound is sharp in the following sense: Kaufman and Mattila \cite[Theorem 5]{KM} constructed a compact set $K \subset \R^{2}$ with any Hausdorff dimension $\Hd K \in [0,1]$ such that $\Hd E = \Hd K$.
\subsubsection*{Packing and box dimensions} Let $\Pd$ stand for packing dimension. Then, there are compact sets $K \subset \R^{2}$ such that
\begin{displaymath} \{e \in S^{1} : \Pd \pi_{e}(K) < \min\{\Pd K,1\}\} = S^{1}. \end{displaymath}
Such sets were first constructed by J\"arvenp\"a\"a \cite{Ja}. However, positive results can be obtained by considering instead $\{e \in S^{1} : \Pd \pi_{e}(K) < s\}$ for various $0 < s < \Pd K$. For sharp results, both positive and negative, see the work \cite{FH} of Falconer and Howroyd. The situation is similar for box dimension(s), see the references above.
\subsubsection*{Assouad dimension} Theorem \ref{main} evidently gives a sharp result for Assouad dimension, although one could further ask if 
\begin{displaymath} \Pd \{e \in S^{1} : \Ad \pi_{e}(K) < \min\{\Ad K,1\}\} = 0. \end{displaymath}
The proof in this paper does not seem to give this improvement. It would also be interesting to know if $\{e \in S^{1} : \Ad \pi_{e}(K) < \min\{\Ad K,1\}\}$ can be uncountable. 

Theorem \ref{main} does not imply that $\Ad \pi_{e}(K) = \min\{\Ad K,1\}$ for all $e \in S^{1}$ outside of a small set of exceptions. In fact, such a statement is far from true. It was already observed in \cite{FO} that the map $e \mapsto \Ad \pi_{e}(K)$ can be essentially non-constant. This observation was recently strengthened by Fraser and K\"aenm\"aki \cite{FK}: if $\phi \colon S^{1} \to [0,1]$ is any upper semicontinuous function with $\phi(e) = \phi(-e)$, then there exists a compact set $K \subset \R^{2}$ with $\Ad K = 0$ such that $\Ad \pi_{e}(K) = \phi(e)$ for all $e \in S^{1}$.
\subsubsection*{A mixed problem} What about the set
\begin{displaymath} E' := \{e \in S^{1} : \Pd \pi_{e}(K) < \min\{\Hd K,1\}\}? \end{displaymath} 
Since packing dimension an upper bound for Hausdorff dimension, the theorems of Kaufman and Marstrand imply that $\calH^{1}(E') = 0$ and $\Hd E' \leq \Hd K$. This is unlikely to be sharp: I am not aware of a compact set $K \subset \R^{2}$ with $\Hd E' > 0$! In contrast, $\Pd E'$ can take values arbitrarily close to $1$, see \cite[Theorem 1.17]{Or1}.

\subsubsection*{Sets with additional structure} If $K \subset \R^{2}$ is self-similar, then
\begin{equation}\label{hochman} \Pd \{e \in S^{1} : \Hd \pi_{e}(K) < \min\{\Hd K,1\}\} = 0. \end{equation}
This is a result of Hochman \cite[Theorem 1.8]{Ho} in the case where $K$ contains no irrational rotations. In the presence of irrational rotations, one can further improve \eqref{hochman} to
\begin{equation}\label{PeresShmerkin} \{e \in S^{1} : \Hd \pi_{e}(K) < \min\{\Hd K,1\}\} = \emptyset, \end{equation}
which is an earlier result of Peres and Shmerkin \cite{PS}. Under suitable irrationality hypotheses, too lengthy to explain here, the conclusion \eqref{PeresShmerkin} is also known for self-conformal sets \cite{BJ}, and several classes of self-affine sets, see \cite{FFS,BHR}.

\subsection{Outline of the proof}\label{outline} Before explaining the main steps in the proof of Theorem \ref{main}, I need to describe two initial reductions. 

\subsubsection{Initial reductions}\label{reductions} First, since Assouad dimension is invariant under taking closures, it suffices to prove Theorem \ref{main} for closed sets $F$. Second, it suffices to prove Theorem \ref{main} for compact sets $F \subset \R^{2}$ with $\calH^{\Ad F}(F) > 0$. The proof of this reduction is the same as the proof of \cite[Theorem 2.9]{Fr}, but I sketch the idea briefly; see \cite{Fr} for more details. By a result of K\"aenm\"aki, Ojala, and Rossi \cite[Proposition 5.7]{KOR}, any closed set $F \subset \R^{2}$ has a \emph{weak tangent} $E \subset \R^{2}$ with $\Ad E = \Hd E = \Ad F$, and even
\begin{equation}\label{KORResult} \calH^{\Ad E}(E) > 0. \end{equation}
The authors of \cite{KOR} omit mentioning \eqref{KORResult}, but this is what they prove (see also the discussion after \cite[Theorem 1.3]{Fr}). Now, for any $e \in S^{1}$, the projection $\pi_{e}(E)$ turns out to be a \textbf{subset} of some weak tangent $W_{e}$ of $\pi_{e}(F)$ (see the proof of \cite[Theorem 2.9]{Fr} for details), and hence 
\begin{displaymath} \Ad \pi_{e}(F) \geq \Ad W_{e} \geq \Ad \pi_{e}(E) \end{displaymath}
where the first inequality is \cite[Proposition 6.1.5]{MT}. Consequently,
\begin{displaymath} \{e \in S^{1} : \Ad \pi_{e}(F) < \min\{\Ad F,1\}\} \subset \{e \in S^{1} : \Ad \pi_{e}(E) < \min\{\Ad E,1\}\}, \end{displaymath}
and Theorem \ref{main} now follows if one manages to prove that the set on the right has zero Hausdorff dimension. Recalling that $E$ satisfies \eqref{KORResult}, this completes the proof of the second reduction.

\subsubsection{The main argument}\label{mainOutline} Let $F \subset \R^{2}$ be a compact set satisfying $\calH^{d}(F) > 0$, where $d := \Ad F$. These are reasonable assumptions by the previous discussion. Then, let $\mu$ be a $d$-dimensional Frostman measure supported on $F$, and assume with no loss of generality that $F = \spt \mu$. The measure $\mu$ is not quite $d$-regular, but not too far from it either, precisely because $\Ad F$ matches the Frostman exponent of $\mu$. For a way to quantify this, see Lemma \ref{lemma1}.

The measure $\mu$ itself is still too general to work with, so we need to pass to another tangent $\nu = \mu^{B}$, where $B$ is a ball with $\mu(B) \approx \diam(B)^{d}$. Most balls have this property by the near-$d$-regularity of $\mu$. To list the (less trivial) properties required of $\nu$, start with a counter assumption: $\Hd S > \epsilon > 0$, where $S = \{e : \Ad \pi_{e}(F) < D\}$ and $0 < D < \min\{1,d\}$. Then, locate an $\epsilon$-dimensional Frostman measure $\sigma$ on $S$. The properties needed of $\nu$ are now -- very roughly speaking! -- the following: there is a constant $0 \leq s \leq D$ such that
\begin{itemize}
\item[(a)] $\pi_{e}(\nu)$ is exact dimensional with dimension $s$ for $\sigma$ almost every $e \in S$,
\item[(b)] The projections $\pi_{e}$ are dimension conserving relative to $\nu$ (in the sense of Furstenberg \cite{Fu}) for $\sigma$ almost every $e \in S$.
\end{itemize}
The second requirement means that the measure $\nu$ conditioned on a $\pi_{e}(\nu)$-generic fibre $\pi_{e}^{-1}\{x\}$ is at least $(d - s)$-dimensional. It is possible that a tangent $\nu = \mu^{B}$ satisfying (a)-(b) \textbf{literally} could be extracted by the theory of CP-chains, see \cite[Section 6]{Fu}, \cite[Theorem 1.22]{Ho1}, and \cite[Theorem 1.30]{Ho1}. However, the requirements (a)-(b) should not be interpreted literally: what we really need are certain $\delta$-discretised versions of (a)-(b); for a precise statement (which is admittedly difficult to decipher with the current background), see \nref{K1}-\nref{K2} in Section \ref{refining}. So, instead of applying the theory of CP-chains, the proof below only relies on combinatorial argument, notably the pigeonhole principle. 

After $\nu$ has been found, we start looking for a contradiction to the hypothesis that $\Hd \sigma > 0$. This constitutes the main effort in the paper. Note that $\nu = \mu^{B}$ is still near-$d$-regular, because $\mu(B) \approx \diam(B)^{d}$, and $\Ad F = d$. So, what we roughly need to prove is the following:
\begin{conjecture}\label{conjecture1} Assume that $\nu$ is a near-$d$-regular measure on $\R^{2}$, $0 \leq s < d$, and $\sigma$ is a Borel probability measure on $S^{1}$ such that (a)-(b) are satisfied. Then $\Hd \sigma = 0$.
\end{conjecture}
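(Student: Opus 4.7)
My plan is to derive a contradiction from the combination of $\Hd \sigma > 0$ and properties (a)--(b). The strategy is to encode (a)--(b) as a discretised incidence configuration at a fine scale $\delta$, and then to apply a Furstenberg-set / discretised incidence estimate to conclude that $\spt \nu$ would be forced to have covering numbers incompatible with near-$d$-regularity. By passing to a Frostman subset, it is enough to fix $\epsilon > 0$ and derive a contradiction under the stronger assumption that $\sigma$ is $\epsilon$-Frostman on $S^{1}$.

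First I would fix a small $\delta > 0$ and cover $\spt \nu$ by $\approx \delta^{-d}$ balls of radius $\delta$; near-$d$-regularity makes this the correct order up to logarithmic factors. Next, I would extract a $\delta$-separated subset $\Sigma \subset \spt \sigma$ with $|\Sigma| \approx \delta^{-\epsilon}$. Property (a), discretised at scale $\delta$, says that for each $e \in \Sigma$ the projection $\pi_{e}(\nu)$ is essentially concentrated on $\approx \delta^{-s}$ intervals of length $\delta$, whose preimages are the \emph{heavy tubes} $T_{e,1},\ldots,T_{e,M_{e}}$ of width $\delta$ in direction $e^{\perp}$, with $M_{e} \approx \delta^{-s}$. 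Property (b), similarly discretised, guarantees that within each heavy tube $T_{e,j}$ the mass of $\nu$ is in turn concentrated on $\approx \delta^{-(d-s)}$ of the $\delta$-cover balls. This identifies a $(d-s)$-dimensional piece of $\spt \nu$ inside a $\delta$-tube pointing in direction $e^{\perp}$, for every direction $e$ in the $\epsilon$-Frostman set $\Sigma$.

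Now I would bring in a Furstenberg-type bound. The configuration just produced is precisely a $\delta$-discretised $(\epsilon, d-s)$-Furstenberg set: an $\epsilon$-dimensional set of directions, each carrying a line with a $(d-s)$-dimensional piece of $\spt \nu$. Invoking a sufficiently sharp discretised Furstenberg-set estimate (of the type developed in the author's work and in joint work with Shmerkin, improving on Wolff's classical bounds), the union of these heavy tubes should have $\delta$-covering number at least $\delta^{-(d-s)-\eta(\epsilon,s,d)}$ with $\eta > 0$ strictly positive whenever $\epsilon > 0$ and $s < d$. Since this union sits inside $\spt \nu$, whose $\delta$-covering number is $\approx \delta^{-d}$, one obtains $d \geq (d-s)+\eta+s$ plus an additional gain; iterating, or combining with an inverse theorem for small $L^{q}$ dimensions of projections, pushes this gain to force $\epsilon = 0$.

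The main obstacle I expect is the interface between the idealised conditions (a)--(b) and the incidence estimate: one needs the number $M_{e}$ of heavy tubes, the cardinality $\approx \delta^{-(d-s)}$ of $\nu$-balls per tube, and the Frostman exponents to be controlled \emph{uniformly} in $e \in \Sigma$ and \emph{across a range of scales}, rather than only in an almost everywhere or pointwise sense. This is presumably the reason the paper replaces (a)--(b) with the $\delta$-discretised avatars \nref{K1}--\nref{K2}, and performs the extraction of $\nu$ by a combinatorial pigeonholing argument in place of the cleaner CP-chain machinery. A secondary difficulty is that the classical Kaufman-type bound for the exceptional set of projections is not strict enough; one genuinely needs a Furstenberg-type estimate that is strictly positive-gain for every $\epsilon > 0$, which is where the recent advances in discretised incidence geometry must enter.
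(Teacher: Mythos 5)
The first thing to flag is that the paper does \emph{not} prove Conjecture~\ref{conjecture1}. Immediately after stating it the author writes: ``Conjecture~\ref{conjecture1} seems plausible, but I do not claim to prove it here.'' What is actually proved is a $\delta$-discretised surrogate (encoded in \nref{K1}--\nref{K2}) that also uses two pieces of information \emph{not} contained in (a)--(b): the set-theoretic porosity of $\pi_{e}(\spt\nu)$ inherited from the assumption $\Ad\pi_e(F) < D$, and the multi-scale variant (a') of condition (a). So there is no ``paper's own proof'' of this exact statement to compare against; the conjecture is explicitly left open.

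Setting that aside, the Furstenberg-union step in your sketch does not close, for a quantitative reason. You extract $\approx\delta^{-\epsilon}$ directions, and for each direction $\approx\delta^{-s}$ heavy $\delta$-tubes, each carrying $\approx\delta^{-(d-s)}$ balls of $\spt\nu$. But already for a \emph{single} direction these tubes are pairwise disjoint and contain $\approx\delta^{-s}\cdot\delta^{-(d-s)}=\delta^{-d}$ balls --- that is, essentially all of $\spt\nu$. The union over all $\delta^{-\epsilon}$ directions is a subset of $\spt\nu$ and hence still has covering number $\lesssim\delta^{-d}$; no Furstenberg-type lower bound can push it past this. Your displayed inequality $d\ge(d-s)+\eta+s$ would indeed be a contradiction, but it does not follow from the estimate you quote: a Furstenberg estimate of the form $N(\text{union},\delta)\gtrsim\delta^{-(d-s)-\eta}$ is \emph{weaker} than the trivial bound $\delta^{-d}$ whenever $\eta<s$, and the extra ``$+s$'' is exactly what is already consumed by having $\delta^{-s}$ parallel tubes per direction. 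More structurally, the dimension-conservation hypothesis (b) plays no genuine role in a union/incidence count --- it only fixes the cardinality of each fibre --- yet (b) is precisely the rigidity that makes the conjecture plausible; (a) alone is far from enough. The mechanism that uses (b) needs to be \emph{multiplicative}: the paper builds a quasi-product measure $\mu_h\times\mu_v$ from $\nu$ (Sections~\ref{s:productStructure}--\ref{productMeasures}), shows $\mu_h\approx\pi_{e_1}(\nu)$ is porous at all scales while $\mu_v$ is spread out (using (a') and \nref{K2}), and then applies Shmerkin's inverse theorem to the convolution $\mu_h\ast\theta\mu_v$ to force $e$ to be extremely close to $e_1$. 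Discretised incidence/Furstenberg estimates live in the same circle of ideas --- indeed they are often proved \emph{via} such inverse theorems --- but a covering-number count of unions is not an adequate substitute here.
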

Conjecture \ref{conjecture1} seems plausible, but I do not claim to prove it here. In fact, recalling that the our $\nu$ only satisfies approximate variants of (a)-(b), Conjecture \ref{conjecture1} would not be literally useful in the present context. However, the underlying point in Section \ref{core} is to prove a version of Conjecture \ref{conjecture1}, using the "real" information we have about $\nu$, and hence contradict the positive-dimensionality of $\sigma$. 

To be honest, this "real" information contains some pieces not contained in (a)-(b). First, we have $S \subset \{e : \Ad \pi_{e}(F) < D\}$, which in particular implies a quantitative -- and useful -- porosity property for $\spt \pi_{e}(\nu)$, $e \in S$. With additional effort, one might be able to work with the weaker measure-theoretic porosity of $\pi_{e}(\nu)$ implied by (a) alone, but the set-theoretic porosity of $\spt \pi_{e}(\nu)$ is certainly more pleasant to apply. A second, and more crucial, piece of additional information is
\begin{itemize}
\item[(a')] property (a) also for all tangents of $\nu$ of "at moderate scales". 
\end{itemize}
This roughly means that if $\delta > 0$ is the smallest scale where all the action happens, and $\delta < \Delta \leq \delta^{\kappa}$ for some suitable (small) constant $\kappa = \kappa(\epsilon) > 0$, then the renormalised restriction of $\nu$ to any $\Delta$-ball centred at $\spt \nu$ has roughly $s$-dimensional projections at scale $\delta/\Delta$ for most directions $e \in S$. This is vital in Section \ref{core}, but makes virtually no difference in the construction of $\nu$. I do not know how to derive -- or even formulate -- an analogous statement from/within the theory of CP-chains.

At the end, the proof of our (discretised and watered-down version of) Conjecture \ref{conjecture1} rests on an application of Shmerkin's inverse theorem \cite[Theorem 2.1]{Sh}. This theorem is the latest quantification of the following phenomenon, initially discovered by Bourgain \cite{Bo1,Bo2}, and later developed by Hochman \cite{Ho}: if $\nu_{1} \times \nu_{2}$ is a product measure on $\R^{2}$, $e \in S^{1}$ is at positive distance from $\{(1,0),(0,1)\}$, and the $\delta$-entropies of $\pi_{e}(\nu_{1} \times \nu_{2})$ and $\dim \nu_{1}$ are comparable for some $0 < \delta \ll 1$, then all the scales between $\delta$ and $1$ can be split into two disjoint groups: those where $\nu_{1}$ is "uniform", and those where $\nu_{2}$ is "singular".

In our setting, there are no product measures to begin with. However, assuming that $(1,0) \in S$ without loss of generality, a scheme introduced in \cite{Or} allows one to derive from $\nu$ -- using (a') -- a product measure $\nu_{1} \times \nu_{2}$ with the properties that
\begin{enumerate}
\item $\nu_{1} \approx \pi_{(1,0)}(\nu)$,
\item $\nu_{2} \approx \nu$ conditioned on a $\pi_{(1,0)}(\nu)$-generic fibre $\pi_{(1,0)}^{-1}\{x\}$,
\item $\dim \pi_{e}(\nu_{1} \times \nu_{2}) \approx \dim \pi_{e}(\nu)$ for $e \in S$ sufficiently close to $(1,0)$.
\end{enumerate}
This step is accomplished in Sections \ref{s:productStructure}-\ref{productMeasures}. The main geometric idea is that if $\delta > 0$ is a scale, $T = J \times [0,1] \subset [0,1]^{2}$ is a vertical $\delta^{1/2}$-tube, where $J \subset [0,1]$ is a $\delta^{1/2}$-interval, and $P \subset T$ is any $\delta$-separated set, then there always exists a "quasi-product" set $\bar{P} \subset T$ of the form
\begin{displaymath} \bar{P} = \bigcup_{h \in \calD} \bar{P}_{h} \times \{h\} \end{displaymath}
such that the $\delta$-covering numbers of $\pi_{e}(P)$ and $\pi_{e}(\bar{P})$ are comparable for all $e \in S^{1}$ with $|e - e_{0}| \leq \delta^{1/2}$, where $e_{0} := (1,0)$. Here $\calD \subset [0,1]$ is a $\delta^{1/2}$-net in $\pi_{(0,1)}(P)$, and each $\bar{P}_{h}$ is a $\delta$-separated subset of $J$. This idea already appeared in \cite{Or}, and \cite[Section 1.3]{Or} contains a little more explanation. To remove the word "quasi", we would need to know that the sets $\bar{P}_{h}$ are the same for (nearly) all $h \in \mathcal{D}$. This is generally not true, but a reasonable substitute holds in a situation where the $\delta$-covering number of $\pi_{e_{0}}(\bar{P})$ is comparable to $|\bar{P}_{h}|$ for nearly all $h \in \mathcal{D}$. Since $\bar{P}_{h} \subset \pi_{e_{0}}(\bar{P})$ for all $h \in \mathcal{D}$, this situation implies that $|\bar{P}_{h} \cap \bar{P}_{h'}| \approx |\bar{P}_{h}|$ for nearly all pairs $h,h' \in \mathcal{D}$. This information is almost as good as knowing that $\bar{P}_{h} = \bar{P}_{h'}$ for all $h,h' \in \mathcal{D}$. In conclusion, seriously cutting corners, we might say that $\bar{P}$ is a product set whose projections in directions $|e - e_{0}| \leq \delta^{1/2}$ have $\delta$-covering numbers comparable to those of $\pi_{e}(P)$. I hope this sounds remotely like (3) above. Finally, the crucial comparability of $|\bar{P}_{h}|$ and $\pi_{e_{0}}(\bar{P})$ in our concrete situation is based on (a').

The claims (1)-(3) should not be taken literally; the first one in particular is quite far from reality. Let us, nevertheless, argue that having them would be useful in completing the proof of Theorem \ref{main}. For $e \in S \, \setminus \, \{(1,0),(0,1)\}$, we have
\begin{displaymath} \dim \pi_{e}(\nu_{1} \times \nu_{2}) \approx \dim \pi_{e}(\nu) = s = \dim \pi_{(1,0)}(\nu) \approx \dim \nu_{1}, \qquad e \in S. \end{displaymath}
This would be useless if $S = \{(1,0)\}$, but the counter assumption $\dim S > 0$ allows us to pick $e$, as above, at a reasonable distance from $\{(1,0),(0,1)\}$. Hence, Shmerkin's inverse theorem describes the structure of $\nu_{1}$ and $\nu_{2}$. Since $\nu_{1} \approx \pi_{(1,0)}(\nu)$ has the quantitative porosity property alluded to above, $\nu_{1}$ cannot be "uniform" on \textbf{any} scales, and hence $\nu_{2}$ is "singular" on all scales. This forces $\dim \nu_{2} \approx 0$. But it follows from the second bullet point above, and (b), that actually $\dim \nu_{2} \geq d - s > 0$. This gives the desired contradiction.

The detailed proof given below is completely elementary and self-contained, except for the application of Shmerkin's inverse theorem at the end.

\section{Acknowledgements}

I thank Tom Kempton for useful discussions during an early phase of the project. I also thank the anonymous reviewer for reading the paper carefully, and for making a large number of helpful suggestions. 

\section{Finding a good blow-up}  

I will now start to implement the strategy outlined in Section \ref{outline}. Non-zero Radon measures supported on a set $E \subset \R^{2}$ will be denoted by $\calM(E)$. If $E \subset \R^{2}$ is bounded, and $r > 0$, the notation $N(E,r)$ stands for the smallest number of open balls of radius $r$ needed to cover $E$. All balls in the paper will be open, unless otherwise specified. The notation $|P|$ will refer to the cardinality of a finite set $P$. For $A,B > 0$, and a parameter "$p$", the notation $A \lesssim_{p} B$ means that there exists a constant $C \geq 1$, depending only on $p$, such that $A \leq CB$. The notation $A \lesssim B$ means that the constant $C$ is absolute. The notation $A \gtrsim B$ is equivalent to $B \lesssim A$, and $A \sim B$ is shorthand notation for $A \lesssim B \lesssim A$. 

\begin{definition}[Assouad dimension]\label{Adim} Let $n \geq 1$ and $F \subset \R^{n}$. The \emph{Assouad dimension} of $F$ is the infimum of the numbers $s \geq 0$ to which there corresponds a constant $C = C_{s} > 0$ as follows: 
\begin{displaymath} N(F \cap B(x,R),r) \leq C\left(\frac{R}{r} \right)^{s}, \qquad x \in \R^{n}, \: 0 < r \leq R < \infty. \end{displaymath}

\end{definition}
\begin{definition}[$d$-quasiregular measures] Let $d \in [0,2]$. A measure $\mu \in \calM(\R^{2})$ is called \emph{$d$-quasiregular} if $\mu$ is a $d$-Frostman measure with $\Ad (\spt \mu) \leq d$. In other words, for every $\epsilon > 0$ there is a constant $C_{\epsilon} \geq 1$ such that
\begin{equation}\label{form39} \mu(B(x,r)) \leq r^{d} \quad \text{and} \quad N([\spt \mu] \cap B(x,R),r) \leq C_{\epsilon}\left(\frac{R}{r} \right)^{d + \epsilon} \end{equation}
for all $x \in \R^{2}$ and $0 < r < R < \infty$. \end{definition}

In the sequel, I write $B_{0} := B(0,1)$. Recalling the argument in Section \ref{reductions}, Theorem \ref{main} is a consequence of following statement:

\begin{thm}\label{mainTechnical} Let $0 \leq d \leq 2$, and let $\mu \in \calM(B_{0})$ be a $d$-quasiregular measure. Then
\begin{displaymath} \Hd \{e \in S^{1} : \Ad \pi_{e}(\spt \mu) < \min\{d,1\}\} = 0. \end{displaymath}
\end{thm}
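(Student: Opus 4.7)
Following the outline in Section \ref{outline}, I would proceed by contradiction. Fix $D < \min\{d,1\}$ and $\epsilon > 0$, assume
\[
\Hd \{e \in S^{1} : \Ad \pi_{e}(\spt \mu) < D\} > \epsilon,
\]
and, via Frostman's lemma, pick a Borel probability measure $\sigma$ on the exceptional set $S$ satisfying $\sigma(B(e,r)) \lesssim r^{\epsilon}$. Using $d$-quasiregularity of $\mu$, I would select a scale $r$ and a centre $x \in \spt \mu$ with $\mu(B(x,r)) \gtrsim r^{d}$, and let $\nu = \mu^{B(x,r)}$ be the corresponding blow-up; such points are typical by the Frostman bound, so $\nu$ inherits the near-$d$-regularity of $\mu$. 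A multi-step pigeonhole argument then produces a dimension parameter $s \in [0,D]$, a subset $S' \subset S$ of $\sigma$-mass $\gtrsim 1$, and a range of moderate scales $\delta \leq \Delta \leq \delta^{\kappa}$ such that the $\delta$-entropy of $\pi_{e}(\nu)$ is approximately $s\log(1/\delta)$ for every $e \in S'$, and, crucially, so is the $\delta/\Delta$-entropy of $\pi_{e}$ applied to the renormalised restriction of $\nu$ to any $\Delta$-ball meeting $\spt \nu$. This is the discretised form of properties (a) and (a') from the outline. A separate pigeonhole on fibrewise entropies, combined with near-$d$-regularity, extracts the discretised dimension-conservation statement (b), lower-bounding generic fibre-entropies by roughly $(d-s)\log(1/\delta)$.

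The second stage converts $\nu$ into a near-product structure. After rotating so that $(1,0) \in S'$, for each vertical $\delta^{1/2}$-tube $T = J \times [0,1]$ I would replace the relevant $\delta$-separated support of $\nu|_{T}$ by a quasi-product set
\[
\bar{P} = \bigcup_{h \in \calD} \bar{P}_{h} \times \{h\},
\]
whose projections in directions $|e - (1,0)| \leq \delta^{1/2}$ have $\delta$-covering numbers matching those of $\nu|_{T}$. Property (a') guarantees that $|\bar{P}_{h}|$ is comparable to $|\pi_{(1,0)}(\bar{P})|$ for most $h \in \calD$, so the sets $\bar{P}_{h}$ almost coincide as $h$ varies, which is the combinatorial substitute for a genuine product. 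Assembling contributions over all tubes yields a product measure $\nu_{1} \times \nu_{2}$ satisfying the approximations (1)-(3) of the outline.

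To conclude, since $\dim \sigma > 0$ I could pick $e \in S'$ at a definite distance from $\{(1,0),(0,1)\}$, and apply Shmerkin's inverse theorem \cite[Theorem 2.1]{Sh} to the product $\nu_{1} \times \nu_{2}$ in direction $e$. Because the Assouad dimension of $\spt \pi_{(1,0)}(\nu)$ is strictly less than $1$, this support is uniformly porous, and $\nu_{1} \approx \pi_{(1,0)}(\nu)$ inherits that porosity; hence the Shmerkin dichotomy forbids $\nu_{1}$ from being "uniform" at any scale, and forces $\nu_{2}$ to be atomic on every scale. Consequently the $\delta$-entropy of $\nu_{2}$ is $o(\log(1/\delta))$, contradicting the lower bound $(d-s)\log(1/\delta) \geq (d-D)\log(1/\delta) > 0$ supplied by (b).

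I expect the main obstacle to be the first stage: quantifying (a), (a'), and (b) simultaneously at the $\delta$-discretised level, uniformly across the whole range of moderate scales $\delta \leq \Delta \leq \delta^{\kappa}$ and across a positive-$\sigma$-mass set of directions. The clean CP-chain route of Furstenberg and Hochman is not readily available since (a') has no clear CP-chain counterpart, so the argument must proceed by explicit multi-scale pigeonholing; getting all three discretised statements to hold together, with $\kappa = \kappa(\epsilon)$ depending only on the Frostman exponent of $\sigma$, is the delicate bookkeeping that ultimately enables the application of Shmerkin's inverse theorem.
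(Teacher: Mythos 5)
The proposal is a faithful summary of the heuristic in Section \ref{outline}, but it glosses over the two points that make the actual proof delicate, and in doing so introduces an internal inconsistency. In Stage~2 you correctly note that the quasi-product comparison $\dim \pi_e(\nu_1 \times \nu_2) \approx \dim \pi_e(\nu)$ is only available for $|e - (1,0)| \leq \delta^{1/2}$. In Stage~3, however, you want to ``pick $e \in S'$ at a definite distance from $\{(1,0),(0,1)\}$'' in order to invoke Shmerkin's inverse theorem. These two requirements are incompatible as stated: if $e$ lies at a constant distance from $e_1 = (1,0)$ the product comparison has broken down, and if $|e - e_1| \leq \delta^{1/2}$ then $e$ is not at definite distance and the inverse theorem is applied in a degenerate direction. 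The paper resolves this by the affine renormalisation $A(x,y) = (\Delta^{-p}x, y)$ of a single heavy tube $T_0 \in \calT^{th}(e_1)$: after rescaling, a direction $e$ with $\Delta^{q-\epsilon_1} \leq |e - e_1| \lesssim \Delta^p$ becomes a direction $e'$ with $\Delta^{p-\epsilon_1} \lesssim |e' - e_1| \lesssim 1$, and it is \emph{this} $e'$ that goes into the Shmerkin hypothesis. The conclusion of the Shmerkin step is not a single contradiction but the confinement estimate $S_1 \subset B(e_1, \Delta^{q - \epsilon_1})$ of Proposition \ref{prop2}; this by itself does not contradict the Frostman bound for $\sigma$ when $\epsilon_0$ is small, since $q - \epsilon_1$ is tiny. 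The contradiction only emerges after iterating the confinement across the $N - 1$ scales $q_2, \ldots, q_N$ in Section \ref{conclusion}, which is what the dyadic cascade $Q = \{0, 2^{-2N}, 2^{-N+2}, \ldots, 1\}$ and the choice $10N \cdot 2^{-N} < \epsilon_0$ are built for. This iterative arc-shrinking is absent from your proposal and cannot be dispensed with.

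Two further points. First, ``select a scale $r$ and a centre $x$ with $\mu(B(x,r)) \gtrsim r^d$'' understates the blow-up: the paper's $\nu$ is reached by an inductive process of up to $O(1/\alpha)$ blow-ups, driven by a stopping rule (the ``bad ball'' condition \nref{form11}) which is exactly what forces property (a') to hold uniformly over the moderate scales $\Delta^{q_i}$ for a positive-$\sigma$-mass set of directions simultaneously; a one-shot selection gives neither (a') nor the multi-scale branching \nref{B1}--\nref{B3}. Second, there is no separate pigeonhole for fibre entropies: the dimension-conservation ingredient (b) enters only implicitly, through the non-concentration estimates \nref{NC2}/\nref{K2} and the resulting bound \eqref{form73} on the measure $\mu_v$, not as an independent discretised statement. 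You correctly flag Stage~1 as a hazard, but the structural gap you should be more worried about is the rescaling/iteration in Stage~3.
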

What follows is a proof of Theorem \ref{mainTechnical}. For the rest of the paper, fix $0 < d \leq 2$, and a $d$-quasiregular measure $\mu \in \calM(B_{0})$. Write $K := \spt \mu$. 

\subsection{Blow-ups and their (quasi)regularity} I now define what is meant by \emph{blowing up} of a measure in $\calM(\R^{2})$.

\begin{definition}[Blow-ups] Let $\nu \in \calM(\R^{2})$, and let $B = B(x,r) \subset \R^{2}$ be a ball. Let $T_{B}(y) = (y - x)/r$ be the unique homothetic map taking $B$ to $B_{0}$, and define $\nu^{B} \in \calM(\R^{2})$ as
\begin{displaymath} \nu^{B} := \nu^{B,d} := \frac{T_{B}(\nu)}{r^{d}}. \end{displaymath}
\end{definition}
In general, the definition above does not guarantee that $\nu^{B}(B_{0}) = 1$. However, we will only use the blow-up procedure in balls $B(x,r)$ on which the measure "$\nu$" in question has mass roughly $r^{d}$. I record the following "chain rule"
\begin{equation}\label{chain} (\mu^{B})^{B'} = \mu^{(T_{B'} \circ T_{B})^{-1}(B_{0})}, \end{equation} 
which is valid for all balls $B,B'\subset \R^{2}$ with radii $r,r' > 0$, and follows by noting that
\begin{displaymath} (\mu^{B})^{B'}(A) = \frac{\mu_{B}(T_{B'}^{-1}(A))}{(r')^{d}} = \frac{\mu((T_{B'} \circ T_{B})^{-1}(A))}{(rr')^{d}} = \frac{\mu(T_{(T_{B'} \circ T_{B})^{-1}(B_{0})}^{-1}(A))}{(rr')^{d}}, \qquad A \subset \R^{2}. \end{displaymath}

The next lemma verifies that $d$-quasiregularity is preserved under blow-ups.

\begin{lemma}\label{lemma2} If $B = B(x_{0},r_{0}) \subset \R^{2}$, then $\mu^{B}$ satisfies \eqref{form39} with the same constants as $\mu$:
\begin{displaymath} \mu^{B}(B(x,r)) \leq r^{d} \quad \text{and} \quad N(\spt \mu^{B} \cap B(x,R),r) \leq C_{\epsilon} \left(\frac{R}{r} \right)^{d + \epsilon} \end{displaymath}
for all $x \in \R^{2}$, $\epsilon > 0$, and $0 < r < R < \infty$. \end{lemma}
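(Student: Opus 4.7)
The strategy is to unpack the definitions and reduce both inequalities to the corresponding statements for $\mu$ via the change of variables $T_{B}(y) = (y-x_{0})/r_{0}$. Observe that for any Borel set $A \subset \R^{2}$ one has $\mu^{B}(A) = \mu(T_{B}^{-1}(A))/r_{0}^{d}$, and that $T_{B}$ is a homothety of ratio $1/r_{0}$, so in particular $T_{B}^{-1}(B(x,r)) = B(x_{0} + r_{0}x, r_{0}r)$. Note also that $\spt \mu^{B} = T_{B}(\spt \mu)$.

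For the first (Frostman) bound, I would simply write
\begin{displaymath} \mu^{B}(B(x,r)) = \frac{\mu(B(x_{0} + r_{0}x, r_{0}r))}{r_{0}^{d}} \leq \frac{(r_{0}r)^{d}}{r_{0}^{d}} = r^{d}, \end{displaymath}
using the hypothesis that $\mu$ satisfies the first half of \eqref{form39}.

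For the second (Assouad-type covering) bound, the key point is that a cover of a set $E$ by balls of radius $\rho$ pulls back under $T_{B}^{-1}$ to a cover of $T_{B}^{-1}(E)$ by balls of radius $r_{0}\rho$, and conversely; hence $N(T_{B}(E), \rho) = N(E, r_{0}\rho)$. Combining this with the identity $T_{B}(\spt \mu) \cap B(x, R) = T_{B}(\spt \mu \cap B(x_{0} + r_{0}x, r_{0}R))$, I would conclude
\begin{displaymath} N(\spt \mu^{B} \cap B(x,R), r) = N(\spt \mu \cap B(x_{0} + r_{0}x, r_{0}R), r_{0}r) \leq C_{\epsilon} \left(\frac{r_{0}R}{r_{0}r}\right)^{d+\epsilon} = C_{\epsilon} \left(\frac{R}{r}\right)^{d+\epsilon}, \end{displaymath}
where the second half of \eqref{form39} for $\mu$ is applied to the ball $B(x_{0} + r_{0}x, r_{0}R)$. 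The ratio $R/r$ is invariant under the homothety, which is exactly why the constants $C_{\epsilon}$ are preserved with no loss.

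There is no genuine obstacle here; this is a routine scale-invariance computation, and the lemma really just records the fact that both conditions in \eqref{form39} are invariant under homotheties (including the trivial normalization by $r_{0}^{d}$ built into the definition of $\mu^{B}$). The only small thing worth being careful about is making sure that the $d$ in the definition of $\mu^{B} = T_{B}(\mu)/r_{0}^{d}$ matches the exponent of $r_{0}$ introduced by rescaling the Frostman bound; this is precisely what makes the two bounds pass through with the \emph{same} constants rather than with constants depending on $r_{0}$.
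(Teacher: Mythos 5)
Your proposal is correct and takes essentially the same route as the paper: both proofs unwind the definition $\mu^{B}(A) = \mu(T_{B}^{-1}(A))/r_{0}^{d}$ and use the scale-invariance of the covering number under homothety to reduce each inequality to the corresponding bound for $\mu$. You are slightly more explicit than the paper in spelling out the identity $N(T_{B}(E),\rho) = N(E,r_{0}\rho)$ and the set equality for the support, but the computation is the same.
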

\begin{proof} For $x \in \R^{2}$, $\epsilon > 0$ and $0 < r < R < \infty$, note that
\begin{displaymath} \mu^{B}(B(x,r)) = \frac{\mu(B(x_{0} + r_{0}x,rr_{0}))}{r_{0}^{d}} \leq r^{d}, \end{displaymath}
and also
\begin{align*} N(\spt \mu^{B} \cap B(x,R),r) & = N(K \cap B(x_{0} + r_{0}x,r_{0}R),rr_{0}) \leq C_{\epsilon}\left(\frac{r_{0}R}{rr_{0}} \right)^{d - \epsilon} = C_{\epsilon} \left(\frac{R}{r} \right)^{d - \epsilon}, \end{align*}
recalling the notation $K = \spt \mu$. \end{proof}

Our life would be an $\epsilon$ easier if $\mu$ were $d$-regular, and not just $d$-quasiregular. However, the conditions \eqref{form39} together guarantee that "$\mu(B(x,r)) \approx r^{d}$ for most balls $B(x,r)$":

\begin{lemma}\label{lemma1} Let $\mu \in \calM(\R^{2})$ be a measure satisfying \eqref{form39}, and let $E \subset B_{0}$. Then, for $\epsilon > 0$ and $0 < r \leq R \leq 1$, the set
\begin{displaymath} E_{r,\epsilon}(R) := \{x \in E : \mu(E \cap B(x,R)) \leq r^{\epsilon}R^{d} \} \end{displaymath}
has measure $\mu(E_{r,\epsilon}(R)) \lesssim C_{\epsilon/2}r^{\epsilon/2}$.
\end{lemma}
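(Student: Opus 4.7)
The strategy is to combine the two halves of the $d$-quasiregularity condition \eqref{form39}: the Frostman upper bound supplies a mass estimate on each ball of radius $R$, while the Assouad-type covering bound controls how many such balls are needed to cover $E_{r,\epsilon}(R) \cap \spt \mu$. By definition, $E_{r,\epsilon}(R)$ consists of centers of balls whose $\mu$-mass falls below the Frostman prediction $R^{d}$ by a factor $r^{\epsilon}$, and this $r^{\epsilon}$ saving should turn into the desired $r^{\epsilon/2}$ after paying a factor $R^{-\epsilon/2}$ to the $\epsilon$-slack in the Assouad bound; the hypothesis $r \leq R$ ensures this exchange is profitable.

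Concretely, the plan is to first replace $E_{r,\epsilon}(R)$ by $E_{r,\epsilon}(R) \cap \spt \mu$ (which is harmless because $\mu(\R^{2} \setminus \spt \mu) = 0$), and then extract a maximal $R$-separated subset $\{x_{1}, \ldots, x_{M}\} \subset E_{r,\epsilon}(R) \cap \spt \mu$. Maximality forces $\bigcup_{i} B(x_{i},R) \supset E_{r,\epsilon}(R) \cap \spt \mu$, while the balls $B(x_{i}, R/2)$ are pairwise disjoint and all centred in $\spt \mu \cap B(0,1)$; consequently the second half of \eqref{form39}, applied with tolerance $\epsilon/2$ inside $B(0,1)$, yields a cardinality bound of the form $M \lesssim C_{\epsilon/2}\, R^{-d-\epsilon/2}$.

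Summing the defining inequality $\mu(E \cap B(x_{i},R)) \leq r^{\epsilon}R^{d}$ over $i$ then gives
\begin{displaymath}
\mu(E_{r,\epsilon}(R)) \leq \sum_{i=1}^{M} \mu(E \cap B(x_{i},R)) \leq M \cdot r^{\epsilon} R^{d} \lesssim C_{\epsilon/2}\, r^{\epsilon} R^{-\epsilon/2},
\end{displaymath}
and the closing observation is the purely arithmetic identity $r^{\epsilon} R^{-\epsilon/2} = r^{\epsilon/2} (r/R)^{\epsilon/2} \leq r^{\epsilon/2}$, valid because $0 < r \leq R \leq 1$.

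I do not expect a genuine obstacle: the lemma is an elementary pigeonhole between the Frostman and Assouad halves of $d$-quasiregularity. The only choices that matter are (i) to center the covering balls \emph{on} the deficit set so that the defining inequality applies, and (ii) to split the Assouad slack symmetrically as $\epsilon/2$, which is forced by the calculation above and explains why the constant $C_{\epsilon/2}$ (rather than $C_{\epsilon}$) appears in the statement.
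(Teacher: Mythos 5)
Your proof is correct and follows essentially the same route as the paper: both arguments bound the number of $R$-balls needed via the Assouad half of \eqref{form39} with slack $\epsilon/2$, sum the defining inequality, and close with $r^{\epsilon}R^{-\epsilon/2}\leq r^{\epsilon/2}$ from $r\leq R$. The only cosmetic difference is that the paper invokes the $5R$-covering theorem with centres in $E_{r,\epsilon}(R)$ and then discards balls missing $\spt\mu$, whereas you extract a maximal $R$-separated net inside $E_{r,\epsilon}(R)\cap\spt\mu$ directly; these are interchangeable implementations of the same packing estimate.
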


\begin{proof}[Proof of Lemma \ref{lemma1}] Using the $5R$-covering theorem, see \cite[Theorem 1.2]{MR1800917}, choose a finite collection $\calB_{0}$ of balls of the form $B(x_{i},R)$ with $x_{i} \in E_{r,\epsilon}(R) \subset B_{0}$ which cover $E_{r,\epsilon}(R)$ and such that the balls $B(x_{i},R/5)$ are disjoint. Let 
\begin{displaymath} \calB := \{B(x_{i},R) \in \calB_{0} : B(x_{i},R) \cap \spt \mu \neq \emptyset\}, \end{displaymath}
and note that $\mu$ almost all of $E_{r,\epsilon}(R)$ is contained in $\cup \calB$. It now follows from \eqref{form39} that $|\calB| \lesssim C_{\epsilon/2}R^{-d - \epsilon/2}$. Indeed, start with $R$-ball cover $\calB'$ of $\spt \mu \cap B_{0}$ with $|\calB'| \leq C_{\epsilon/2}R^{-d - \epsilon/2}$. Then note that the balls $5B$ with $B \in \calB'$ cover the balls $B(x_{i},R/5)$ with $B(x_{i},R) \in \calB$, and conclude that $|\calB'| \gtrsim |\calB|$ by the disjointness of the balls $B(x_{i},R/5)$. Since the intersections $E \cap B(x_{i},R)$ with $B(x_{i},R) \in \calB$, cover $\mu$ almost all of $E_{r,\epsilon}(R) \subset E$, we infer that
\begin{displaymath} \mu(E_{r,\epsilon}(R)) \leq \sum_{B \in \calB} \mu(E \cap B(x_{i},R)) \leq |\calB| \cdot r^{\epsilon}R^{d} \lesssim C_{\epsilon/2}r^{\epsilon/2}, \end{displaymath} 
using $r \leq R$ in the last estimate. \end{proof}

\subsection{The measure $\sigma$ and the key constants of the paper} Let $\sigma$ be an arbitrary Borel probability measure on $S^{1}$. The reader is advised to think that $\sigma$ is a Frostman measure in the set
\begin{equation}\label{form79} \{e \in S^{1} : \Ad \pi_{e}(K) < \min\{d,1\}\}, \end{equation}
but no Frostman condition will be required for a long time to come. Write
\begin{displaymath} 2^{-\N} := \{\tfrac{1}{2},\tfrac{1}{4},\ldots\}. \end{displaymath}
For $r > 0$ and $e \in S^{1}$, an \emph{$(r,e)$-tube} stands for a set of the form $B_{0} \cap \pi_{e}^{-1}(I)$, where $I \subset \R$ is an interval of length $r$. The tube $B_{0} \cap \pi_{e}^{-1}(I)$ is \emph{dyadic} if 
\begin{displaymath} I \cap \pi_{e}(B_{0}) \neq \emptyset \quad \text{and} \quad I \in \calD, \end{displaymath}
where $\calD$ is the (standard) dyadic system in $\R$. We might also write \emph{$r$-tube} or \emph{$e$-tube} if the other parameter is clear from the context. We separately emphasise that tubes are always intersected with $B_{0}$; this is because we will be considering measures (satisfying \eqref{form39}) whose support is not contained in $B_{0}$, and we wish to avoid writing "$\nu(B_{0} \cap T)$" all the time.

Here are the main constants in the coming proof:
\begin{displaymath} 0 < \tau \ll \alpha < 1 \quad \text{and} \quad N \geq 1. \end{displaymath}
At the end of the day:
\begin{itemize}
\item $N$ is chosen first. It will depend on a counter assumption that the set in \eqref{form79} has Hausdorff dimension $\epsilon_{0} > 0$.
\item $\alpha$ can be chosen independently of $N$, and it will be chosen small enough to mitigate the evils caused by a large $N$.
\item $\tau$ can be chosen independently of both $\alpha$ and $N$, and it will be chosen small enough mitigate the evils caused by a small $\alpha$ and a large $N$.
\end{itemize}

It might be more illustrative to write "$\epsilon$" in place of "$\tau$" here, but $\epsilon$ is already reserved for the use in \eqref{form39}. We will adopt the following notation: $A \lessapprox_{p} B$ if there exist constants $C_{p},C_{\tau,p} \geq 1$ such that the following inequality holds for all $0 < \delta < 1$:
\begin{displaymath} A \leq C_{\tau,p}\delta^{-C_{p}\tau}B. \end{displaymath}
The notation $A \gtrapprox_{p} B$ means that $A \lessapprox_{p} B$, and the two-sided inequality $A \lessapprox_{p_{1}} B \lessapprox_{p_{2}} A$ will be abbreviated to $A \approx_{p_{1},p_{2}} B$. 

The constant $N$ is, in fact, the number (minus one) of elements in a fixed collection of dyadic rationals
\begin{displaymath} Q := \{q_{0},q_{1},\ldots,q_{N}\} \subset 2^{-\N} \cup \{0\} \quad \text{with} \quad 0 = q_{0} < q_{1} < \ldots < q_{N} = 1. \end{displaymath}
To be accurate, the final choice of the constant $\alpha > 0$ will depend on $Q$, and not only $N$. However, in the eventual application, $Q$ will have the form
\begin{displaymath} Q = \{q_{0},q_{1},\ldots,q_{N}\} = \{0,2^{-2N},2^{-N + 2},2^{-N + 3},\ldots,2^{-1},1\}, \end{displaymath}
so the statements "$\alpha$ will only depend on $N$" and "$\alpha$ will only depend on $Q$" are equivalent in the end. Finally, I also fix some (rapidly) increasing function
\begin{displaymath} f \colon \N \to \N \quad \text{with} \quad f(0) = 1. \end{displaymath}
The necessary rate of increase will be established during the proof below, rather implicitly, but $f(n + 1) > f(n)$ can always be chosen so that
\begin{displaymath} f(n + 1) \lesssim_{\alpha,Q} f(n). \end{displaymath}
In particular, the growth rate of $f$ \textbf{will not depend on $\tau$}, and hence any factors of the form $C_{\alpha,Q}f(n)\tau$, with $n \lesssim_{\alpha,Q} 1$, can eventually be made negligible by choosing $\tau > 0$ small in a manner depending only on $\alpha,Q$. During most of the proof (until the time we actually need to worry about choosing them), the quantities $\alpha$ and $Q$ will be regarded as "absolute", and I will abbreviate
\begin{equation}\label{lessapprox} A \lessapprox_{\alpha,Q} B \quad \text{to} \quad A \lessapprox B. \end{equation} 
To summarise, $A \lessapprox B$ means that $A \leq C\delta^{-C\tau} B$, where $C \geq 1$ is some constant depending on $\alpha$ and $Q$.

\subsection{Finding the good blow-up $\nu$}\label{tangentSection} Fix $e \in S^{1}$. This section contains an inductive construction -- or rather selection -- of a ball $B(x,R)$ with $x \in B_{0}$ and $0 < R \leq 1$, and a scale $0 < r < R$. In over-simplistic, but hopefully illustrative, terms, the selection will be done so that the quantity
\begin{displaymath} H_{e}(K \cap B(x,R),r) := \frac{\log N(\pi_{e}[K \cap B(x,R)],r)}{\log (R/r)} \end{displaymath}  
\textbf{cannot} be (substantially) decreased by replacing the triple $(x,R,r)$ by another triple $(x',R',r')$ satisfying $B(x',R') \subset B(x,R)$ and $1 \ll R'/r' \ll R/r$. A "substantial decrease" roughly means that 
\begin{equation}\label{form80} H_{e}(K \cap B(x',R'),r') \leq H_{e}(K \cap B(x,R),r) - \alpha \end{equation}
for some admissible triple $(x',R',r')$. Since $H_{e}(K \cap B(0,1),\delta) \leq 1$, we note that \eqref{form80} can only happen on $\lesssim 1/\alpha$ consecutive iterations. So, after $\lesssim 1/\alpha$ steps, one lands with a ball $B(x,R)$ and a scale $0 < r < R$ satisfying the opposite of \eqref{form80} for all admissible triples $(x',R',r')$. There were two great cheats in this discussion, which we briefly comment on. First, in reality, the quantity to be minimised is something more robust than $H_{e}(K \cap B(x,R),r)$. More accurately, we will (approximately) minimise a number "$s$" so that a large fraction of $\mu(B(x,R))$ can be covered by $\leq (R/r)^{s}$ tubes of width $r$ in direction perpendicular to $e$. The second cheat concerns this direction $e \in S^{1}$. As described above, the selection of $(x,R,r)$ is completely dependent on the initial choice of $e \in S^{1}$. However, in practice we need a uniform choice of $(x,R,r)$ for "$\sigma$-almost all $e \in S^{1}$". This would be easy if $\sigma = \delta_{e}$ for some fixed $e \in S^{1}$. In general, we can only achieve the desired uniformity inside a subset $S \subset S^{1}$ of measure $\sigma(S) \gtrapprox 1$. This is still good enough for practical applications.

\subsubsection{Some preliminaries}\label{preliminaries} Let $\delta > 0$ be a small "scale". At the end of the day, we will need to assume that $\delta > 0$ is small enough relative to $\alpha$ and $Q$, plus certain constants named arising from a quantitative counter assumption to Theorem \ref{mainTechnical}, see \eqref{data}. We also assume that if $g := \gcd Q$, then $\delta^{g^{-n}} \in 2^{-\N}$ for all $0 \leq n \lesssim 1/\alpha$. This is allowed, because we will not be making any claims for all $\delta > 0$; the initial dyadic scale $\delta = \delta_{0}$ simply has to be chosen sufficiently small (and, mainly for notational convenience, of the form $\delta \in 2^{-(g^{n_{\alpha}})\N}$).

We start by recording a frequently used corollary of the pigeonhole principle:
\begin{lemma}\label{pigeon} Let $C \geq 0$, $\delta \in 2^{-\N}$ and $e \in S^{1}$. Let $\nu \in \calM(\R^{2})$. Assume that there exists a collection $\calT$ of dyadic $(\delta,e)$-tubes whose union $E$ satisfies $\delta^{C} \leq \nu(E) \leq 1$. Then, there exists a subcollection $\calT' \subset \calT$, and a constant $0 \leq s \lesssim C$ such that

\begin{equation}\label{form82} \nu\left( \cup \calT' \right) \gtrsim \frac{\nu \left(E\right)}{C \log(1/\delta)} \quad \text{and} \quad \delta^{s} \leq \nu(E \cap T) \leq 2\delta^{s}, \, \: T,T' \in \calT'.\end{equation}
\end{lemma}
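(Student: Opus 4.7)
The plan is a straightforward two-step pigeonhole. Since dyadic $\delta$-intervals partition $\R$, the dyadic $(\delta,e)$-tubes are pairwise disjoint, so
\[ \nu(E) \;=\; \sum_{T \in \calT} \nu(E \cap T). \]
Moreover, there are only $O(1/\delta)$ such tubes, because by definition each one projects under $\pi_{e}$ to a dyadic $\delta$-interval meeting $\pi_{e}(B_{0}) \subset [-1,1]$, and there are $\lesssim 1/\delta$ such intervals.

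First I would throw out tubes of negligible mass. Choosing a threshold $m \sim \delta^{C+1}$ (with a small absolute implicit constant), the tubes with $\nu(E \cap T) \leq m$ contribute in total at most $(1/\delta) \cdot m \lesssim \delta^{C} \leq \nu(E)$, which can be made at most $\nu(E)/2$ by tuning the constant in $m$. The remaining tubes each have $\nu(E \cap T) \in (m, 1]$, a range covered by $k_{0} \lesssim C \log(1/\delta)$ dyadic bins
\[ \calT_{k} \;:=\; \{T \in \calT : 2^{-k-1} < \nu(E \cap T) \leq 2^{-k}\}, \qquad k = 0, \dots, k_{0}. \]
A pigeonhole over these bins then yields some $k$ with $\nu(\cup \calT_{k}) \gtrsim \nu(E)/(C \log(1/\delta))$.

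Taking $\calT' := \calT_{k}$ and defining $s \geq 0$ by $\delta^{s} := 2^{-k-1}$ gives $s = (k+1)\log 2 / \log(1/\delta) \lesssim C$ and $\delta^{s} \leq \nu(E \cap T) \leq 2\delta^{s}$ for every $T \in \calT'$, which is exactly the content of \eqref{form82}. The argument is entirely combinatorial and I do not expect any serious obstacle; the only mild subtlety is the a priori bound $|\calT| \lesssim 1/\delta$, which is what lets the bin count $k_{0}$ be truncated at $O(C \log(1/\delta))$, producing precisely the $\log(1/\delta)$ loss in \eqref{form82} rather than an unbounded one.
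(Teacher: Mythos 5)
Your proof is correct and follows essentially the same route as the paper's: both discard tubes carrying mass below roughly $\delta^{C+1}$ (contributing at most $\nu(E)/2$ in total because $|\calT| \lesssim 1/\delta$), then pigeonhole the remaining mass into $O(C\log(1/\delta))$ dyadic bins. The only cosmetic difference is your choice of half-open bins $(2^{-k-1},2^{-k}]$ versus the paper's closed bins $[2^{-j},2^{-j+1}]$, which changes nothing.
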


\begin{remark} Note that, as a corollary of \eqref{form82}, we have the following estimate for the cardinality of $\calT'$:
\begin{equation}\label{form83} \nu(E) \lessapprox \delta^{s} \cdot |\calT'| \leq \nu\left(E \right). \end{equation}
\end{remark}

\begin{proof}[Proof of Lemma \ref{pigeon}] For $j \in \Z$, let 
\begin{displaymath} \calT^{j} := \{T \in \calT : 2^{-j} \leq \nu(E \cap T) \leq 2^{-j + 1}\}. \end{displaymath}
Note that $\calT^{j} = \emptyset$ for $2^{-j} > \nu(E)$, and in particular for $j < 0$. Second, since $|\calT| \leq 2/\delta$, we have
\begin{displaymath} \nu \left(\bigcup_{2^{-j} < \delta^{C + 1}/16} \cup \calT^{j} \right) \leq \frac{2}{\delta} \sum_{2^{-j} < \delta^{C + 1}/16} 2^{-j + 1} \leq \frac{\delta^{C}}{2} \leq \frac{\nu(E)}{2}.  \end{displaymath}
Consequently, there exists $j \in \Z$ with $0 \leq j \lesssim C\log(1/\delta)$ such that \eqref{form82} holds for $s = \log_{\delta} 2^{-j}$ and $\calT' := \calT^{j}$. \end{proof}

\subsubsection{The induction hypotheses}\label{initialisation} Fix $e \in S^{1}$, and, to start an induction, write $\mu_{0} := \mu$ and $\delta_{0} := \delta$. Recall from \eqref{form39} that $0 < \mu(B_{0}) \leq 1$. Apply Lemma \ref{pigeon} to the collection $\calT$ of all dyadic $(\delta,e)$-tubes. Then, for $\tau > 0$ fixed, if $\delta > 0$ is small enough, there exists a number $0 \leq s_{0}(e) \lesssim 1$ and a collection of $\calT_{0}(e) := \calT' \subset \calT$ with the following properties:
\begin{itemize}
\item[(i)] $\delta_{0}^{-s_{0}(e) + \tau} \leq |\calT_{0}(e)| \leq \delta_{0}^{-s_{0}(e)}$, and
\item[(ii)] $\delta_{0}^{s_{0}(e)} \leq \mu_{0}(T) \leq 2\delta_{0}^{s_{0}(e)}$ for all $T \in \calT_{0}(e)$.
\end{itemize}
The same can be done for every $e \in S^{1}$, but naturally the quantities $s_{0}(e)$ and $|\calT_{0}(e)|$ vary. However, there exists a subset $S_{0} \subset S^{1}$ with $\sigma(S_{0}) \sim_{\alpha} 1$ and a number $s_{0} \in [0,d]$ such that
\begin{equation}\label{form10} |s_{0}(e) - s_{0}| \leq \frac{\alpha}{100}. \end{equation}
for all $e \in S_{0}$. There is nothing we wish -- or can -- do about the collections $\calT_{0}(e)$ varying with $e \in S_{0}$.

\begin{remark} I emphasise the obvious: $\tau > 0$ can be taken arbitrarily small here, just by adjusting the size of $\delta$. The choice of $\tau$ we make here will follow us, hidden in the $\lessapprox$-notation, until the very end of the proof. There we will finally decode the $\lessapprox$-notation to find a constant of the form $C_{\alpha,Q}\tau$. Then, as might be expected, we will need to make $C_{\alpha,Q}\tau$ less than some small number $\epsilon_{0} > 0$. We can indeed do so by returning back right here, and choosing $\delta > 0$ sufficiently small, depending on $\alpha,Q$, and $\epsilon_{0}$. \end{remark}

Next, we will attempt to decrease the number $s_{0}$ as much as we can, by either changing the scale $\delta_{0}$, or passing to a "rough tangent" -- or doing both. We assume inductively that we have already found the following objects for some $n \geq 0$:
\begin{itemize}
\item[(P1)\phantomsection\label{P1}] A scale $\delta_{0} \leq \delta_{n} \in 2^{-\N}$, which for $n \geq 1$ has the form $\delta_{n} = \delta_{n - 1}^{q_{j} - q_{i}}$ for some $q_{i} < q_{j}$,
\item[(P2)\phantomsection\label{P2}] A subset $S_{n} \subset S$ of measure $\sigma(S_{n}) \gtrsim_{\alpha,n,Q} \delta^{\Sigma(n)\tau}$, where $\Sigma(n) = \sum_{k = 0}^{n}f(n)$,
\item[(P3)\phantomsection\label{P3}] A number $s_{n} \leq s_{0}$,
\item[(P4)\phantomsection\label{P4}] For every $e \in S_{n}$, a number $s_{n}(e) \in [0,d]$ with $|s_{n}(e) - s_{n}| \leq \alpha/100$,
\item[(P5)\phantomsection\label{P5}] A measure $\mu_{n} \in \calM(\R^{2})$ of the form $\mu_{n} = \mu^{B(x_{n},r_{n})}$, where $x_{n} \in \R^{2}$ and $r_{n} \leq 1$, so in particular $\mu_{n}$ always satisfies \eqref{form39} by Lemma \ref{lemma2},
\item[(P6)\phantomsection\label{P6}] For every $e \in S_{n}$, a collection $\calT_{n}(e)$ of dyadic $(\delta_{n},e)$-tubes with
\begin{itemize}
\item[(i)] $|\calT_{n}(e)| \geq \delta_{n}^{-s_{n}(e) + f(n)\tau}$,
\item[(ii)] $\delta_{n}^{s_{n}(e)} \leq \mu_{n}(T) \leq 2\delta_{n}^{s_{n}(e)}$ for all $T \in \calT_{n}(e)$.
\end{itemize}
\end{itemize}

Clearly the conditions \nref{P1}-\nref{P6} are satisfied when $n = 0$, since $f(0) = 1$. We now explain, when to continue the induction -- and how -- and when to stop. 

\subsubsection{Bad balls in a fixed direction}\label{badBalls} Fix 
\begin{displaymath} q_{i},q_{j} \in Q \quad \text{with} \quad q_{i} < q_{j}, \end{displaymath}
and consider any ball $B := B(x,\delta_{n}^{q_{i}})$. For $e \in S_{n}$, the ball $B$ is called $(e,n)$-bad relative to the scale $\delta_{n}^{q_{j}}$ if there exists a number
\begin{equation}\label{form14} 0 \leq s_{n + 1}(e) \leq s_{n} - \alpha \end{equation}
and a disjoint collection of $(\delta_{0}^{q_{j} - q_{i}},e)$-tubes $\calT_{n + 1}(e)$ satisfying the properties in \nref{P6} for the index $n + 1$ with the choices
\begin{displaymath} \delta_{n + 1} = \delta_{n}^{q_{j} - q_{i}} \quad \text{and} \quad \mu_{n + 1} = \mu_{n}^{B}. \end{displaymath}
We spell out the conditions explicitly:
\begin{itemize}
\item[(i')] $|\calT_{n + 1}(e)| \geq (\delta_{n}^{q_{j} - q_{i}})^{-s_{n + 1}(e) + f(n + 1)\tau}$,
\item[(ii')] $(\delta_{n}^{q_{j} - q_{i}})^{s_{n + 1}(e)} \leq \mu_{n}^{B}(T) \leq 2(\delta_{n}^{q_{j} - q_{i}})^{s_{n + 1}(e)}$ for all $T \in \calT_{n + 1}(e)$.
\end{itemize}

\begin{remark} The badness of the ball $B$ depends on the choice of $e \in S_{n}$, so the choices of $\delta_{n + 1}$ and $\mu_{n + 1}$ above also depend on $e$. This is something we will deal with in a moment. It is also worth noting that the number $s_{n + 1}(e)$ and the tube family $\calT_{n + 1}(e)$ are far from unique. To emphasise this point, note that even the case $q_{0} = 0$ and $q_{1} = 1$ is allowed. Then $\delta_{n + 1} = \delta_{n}$, and the tube-collection $\calT_{n}(e)$ with roughly $\delta_{n}^{-s_{n}(e)}$ tubes simply gets replaced by another collection of $\calT_{n + 1}(e)$ of $\delta_{n}$-tubes. However, (ii') implies that $|\calT_{n + 1}(e)| \leq \delta_{n}^{-s_{n + 1}(e)}$, and now it follows from the condition \eqref{form14} that $\calT_{n + 1}(e)$ is significantly smaller than $\calT_{n}(e)$. \end{remark}

\subsubsection{Defining $\mu_{n + 1}$ and $\delta_{n + 1}$} Now, we know how to define the number $\delta_{n + 1} = \delta_{n + 1}(e)$, a measure $\mu_{n + 1} = \mu_{n + 1}(e)$ and the tube family $\calT_{n + 1}(e)$ \textbf{if} there exist $q_{i},q_{j} \in Q$ and and an $(e,n)$-bad ball $B(x,\delta^{q_{i}}_{n})$ relative to the scale $\delta_{n}^{q_{j}}$. Next we want to remove the dependence of $\mu_{n + 1}(e)$ and $\delta_{n + 1}(e)$ on the choice of $e \in S_{n}$.

\begin{definition}\label{bad} Let $q_{i},q_{j} \in Q$ with $q_{i} < q_{j}$. A vector $e \in S_{n}$ is called $(q_{i},q_{j})$-bad (a more precise term would be $(n,q_{i},q_{j})$-bad, but the index $n$ should be clear from the context in the sequel) if 
\begin{displaymath} \mu_{n}(\textbf{Bad}(q_{i},q_{j},e)) \geq \delta^{f(n + 1)\tau}, \end{displaymath}
where 
\begin{displaymath} \textbf{Bad}(q_{i},q_{j},e) := \{x \in B_{0} : B(x,\delta_{n}^{q_{i}}) \text{ is $(e,n)$-bad relative to the scale $\delta_{n}^{q_{j}}$}\}. \end{displaymath}
Finally, a vector $e \in S_{n}$ is called \emph{bad} (again, more accurately, \emph{$n$-bad}) if it is $(q_{i},q_{j})$-bad for some $q_{i},q_{j} \in Q$ with $q_{i} < q_{j}$.
\end{definition}

Here is the \emph{stopping condition} for the induction:
\begin{equation}\label{form11} \sigma(\{e \in S_{n} : e \text{ is bad}\}) < \tfrac{\sigma(S_{n})}{2}. \end{equation} 
In this case, we define
\begin{equation}\label{nu} \nu := \mu_{n}, \: \Delta := \delta_{n}, \: S := S_{n}, \: s := s_{n}, \quad \text{and} \quad s(e) := s_{n}(e) \text{ for } e \in S, \end{equation}
and the induction terminates. We will start examining this case in Section \ref{nuProperties}. For now, we discuss how to proceed with the induction if the stopping condition fails, that is,
\begin{displaymath} \sigma(\{e \in S_{n} : e \text{ is bad}\}) \geq \tfrac{\sigma(S_{n})}{2}. \end{displaymath} 
In this case, noting that $|Q \times Q| < \infty$, there exists a fixed pair $(q_{i},q_{j}) \in Q \times Q$ with $q_{i} < q_{j}$ such that
\begin{displaymath} \sigma(\{e \in S_{n} : e \text{ is $(q_{i},q_{j})$-bad}\}) \gtrsim_{Q} \sigma(S_{n}) \stackrel{\textup{\nref{P2}}}{\gtrsim_{\alpha,n,Q}} \delta^{\Sigma(n)\tau}. \end{displaymath}
Fix this pair $(q_{i},q_{j}) \in Q \times Q$. Then, by Fubini's theorem and the definition of $e$ being $(q_{i},q_{j})$-bad, we see that
\begin{align*} \int_{B_{0}} & \sigma(\{e \in S_{n} : x \in \textbf{Bad}(q_{i},q_{j},e)\}) \,d\mu_{n}(x)\\
& \geq \int_{\{e \in S_{n} : e \text{ is $(q_{i},q_{j})$-bad}\}} \mu_{n}(\textbf{Bad}(q_{i},q_{j},e)) \, d\sigma(e) \gtrsim_{\alpha,n,Q} \delta^{f(n + 1)\tau + \Sigma(n)\tau} = \delta^{\Sigma(n + 1)\tau}. \end{align*}
Since $\mu_{n}(B_{0}) \leq 1$, it follows that there exists $x_{0} \in B_{0}$ such that
\begin{equation}\label{form13} \quad \sigma(S_{n + 1}') \gtrsim_{\alpha,n + 1,Q} \delta^{\Sigma(n + 1)\tau}, \end{equation}
where
\begin{displaymath} S_{n + 1}' := \{e \in S_{n} : x_{0} \in \textbf{Bad}(q_{i},q_{j},e)\}. \end{displaymath}
Now, if $e \in S_{n + 1}'$, then $x_{0} \in \textbf{Bad}(q_{i},q_{j},e)$, which means by definition that that $B := B(x_{0},\delta_{n}^{q_{i}})$ is $(e,n)$-bad relative to the scale $\delta_{n}^{q_{j}}$. As explained in Section \ref{badBalls}, this allows us to define the objects
\begin{displaymath} \quad \mu_{n + 1} := \mu_{n}^{B}, \quad \delta_{n + 1} := \delta_{n}^{q_{j} - q_{i}}, \quad \text{and} \quad s_{n + 1}(e), \: \calT_{n + 1}(e) \text{ for } e \in S_{n + 1}. \end{displaymath}
In particular neither the measure $\mu_{n + 1}$ nor the scale $\delta_{n + 1}$ depend on the choice of $e \in S_{n + 1}'$. The condition \nref{P5} follows by the "chain rule" \eqref{chain}:
\begin{displaymath} \mu_{n + 1} = \mu_{n}^{B} = (\mu^{B_{n}})^{B} = \mu^{(T_{B} \circ T_{B_{n}})^{-1}(B_{0})}. \end{displaymath}
Here $B_{n + 1} := (T_{B} \circ T_{B_{n}})^{-1}(B_{0})$ is a ball of radius $r_{n + 1} := \delta_{n}^{q_{i}}r_{n}$. We have now managed to define all the objects mentioned in \nref{P1}-\nref{P6} -- for the index $n + 1$ -- except for the number $s_{n + 1}$. This is easily done: by the pigeonhole principle, there exists a number $s_{n + 1}$, and a further subset of $S_{n + 1} \subset S_{n + 1}'$ of measure $\sigma(S_{n + 1}) \sim_{\alpha} \sigma(S_{n + 1}')$ such that $|s_{n + 1}(e) - s_{n + 1}| \leq \alpha/100$ for all $e \in S_{n + 1}'$. Then \eqref{form13} continues to hold for $S_{n + 1}$ in place of $S_{n + 1}'$, and the induction may proceed.

\subsubsection{How soon is the stopping condition reached?} Recall from \eqref{form14} that $s_{n + 1}(e) \leq s_{n} - \alpha$ whenever the quantity $s_{n + 1}(e)$ is defined, and in particular for all $e \in S_{n + 1}$. So, recalling also that $|s_{n + 1} - s_{n + 1}(e)| \leq \alpha/100$ for all $e \in S_{n + 1}$, the numbers $s_{n}$ satisfy
\begin{displaymath} 0 \leq s_{n} \leq s_{0} - \alpha (n - 1)/2. \end{displaymath}
Since $s_{0} \lesssim 1$, this implies that the the induction can only run $\lesssim 1/\alpha$ steps before terminating. In particular, if $n$ is the index for which the induction terminates, and $S = S_{n}$ (as in \eqref{nu}), then
\begin{equation}\label{form41} \sigma(S) \gtrsim_{\alpha,Q} \delta^{\Sigma(n)\tau}\end{equation} 
with $n \lesssim 1/\alpha$. Here the size of $\Sigma(n)$ depends on $n \lesssim 1/\alpha$ and the growth rate of $f$ (which is further allowed to depend on $\alpha,Q$), so $\Sigma(n) \lesssim_{\alpha,Q} 1$. So, recalling our notational convention "$A \lessapprox B$" from \eqref{lessapprox}, we infer from \eqref{form41} that $\sigma(S) \gtrapprox 1$. 

\begin{remark} It is reasonable to ask: after all these blow-ups, what in the construction guarantees that $\nu$ is not e.g. the zero-measure? After all, the blow-up $\mu^{B}$ involved normalisation by $r(B)^{d}$ which could potentially be a lot larger than $\mu(B)$. However, condition \nref{P6} implies that
\begin{equation}\label{form42} \nu(B_{0}) \geq \Delta^{f(n)\tau} \gtrapprox 1. \end{equation} 
Moreover, since $\nu$ has the form $\mu^{B(x,r)}$ for some ball $B(x,r) \subset \R^{2}$, by \nref{P5}, we deduce that $\mu(B(x,r)) \gtrapprox r^{d}$. So, the definition of the "bad balls" was tailored so that the induction only ever moved along balls with reasonably large $\mu$ measure.
\end{remark}

\subsection{Projecting and slicing $\nu$}\label{nuProperties} We now assume that the induction has terminated at some index $n \lesssim 1/\alpha$, and the objects $\nu,\Delta,S,s$ and $s(e)$ have been defined as in \eqref{nu}. The letter $n$ will stand for this particular index for the rest of the paper. We may assume that $\Delta \in 2^{-\N}$, because by property \nref{P1} $\Delta$ has the form 
\begin{equation}\label{form40} \Delta = \delta^{\prod_{k = 1}^{n} (q_{j_{k}} - q_{i_{k}})} = \delta^{(\gcd Q)^{-n} \prod_{k = 1}^{n} (p_{j_{k}} - p_{i_{k}})} \end{equation}
for some $p_{i_{k}},q_{j_{k}} \in \N$, and we agreed in Section \ref{preliminaries} that $\delta^{(\gcd Q)^{-n}} \in 2^{-\N}$. Moreover, we will frequently need to assume that $\Delta$ is "very small" in a way depending on $\alpha,f(n),Q$. This can be done by selecting $\delta > 0$ small enough (depending on the same parameters), because by \eqref{form40} we have $\Delta \leq \delta^{(\gcd Q)^{-n}}$. 

We recall that the stopping condition \eqref{form11} has been reached at stage $n$, that is,
\begin{displaymath} \sigma(\{e \in S : e \text{ is bad}\}) < \tfrac{\sigma(S)}{2}. \end{displaymath}
By restricting $S$ to a subset of measure at least $\sigma(S)/2$, we may -- and will -- from now on assume that $S$ contains no bad vectors $e$. Let us spell out what this means. If $e \in S$, then $e$ is not $(q_{i},q_{j})$-bad for any $q_{i},q_{j} \in Q$ with $q_{i} < q_{j}$. Thus, for all such $q_{i},q_{j}$, we have
\begin{equation}\label{badMeasure} \nu(\textbf{Bad}(q_{i},q_{j},e)) < \Delta^{f(n + 1)\tau}. \end{equation}
In particular, by \eqref{form42}, for a $\nu$-majority of the points $x \in B_{0}$, the ball $B(x,\Delta^{q_{i}})$ is not bad relative to scale $\Delta^{q_{j}}$: this informally means that a large proportion of the $\nu$-measure in $B(x,\Delta^{q_{i}})$ cannot be captured by \textbf{much fewer} than $(\Delta^{q_{j} - q_{i}})^{-s}$ tubes perpendicular to $e$ and width $\Delta^{q_{j}}$. 

On the other hand, recalling the property \nref{P6} for the measure $\nu = \mu_{n}$, for $e \in S$, there exists $s(e) = s_{n}(e)$ with $|s(e) - s| \leq \alpha/100$, and a collection of dyadic $(\Delta,e)$-tubes $\calT(e) = \calT_{n}(e)$ such that
\begin{itemize}
\item[(i)] $|\calT| \geq \Delta^{-s(e) + f(n)\tau}$, and
\item[(ii)] $\Delta^{s(e)} \leq \nu(T) \leq 2\Delta^{s(e)}$ for all $T \in \calT(e)$.
\end{itemize}
It is crucial that $\nu\left(\cup \calT(e) \right) \geq \Delta^{f(n)\tau}$, which is substantially more than the measure of the points in $\textbf{Bad}(q_{i},q_{j},e)$ by \eqref{badMeasure}. While the precise information in (i) is often needed below, we also record that
\begin{equation}\label{form85} \Delta^{-s(e)} \lessapprox |\calT| \leq \Delta^{-s(e)}, \end{equation}
which follows immediately by combining (i)-(ii) and recalling that $\nu(B_{0}) \leq 1$.

The next aim is to show that $\nu$ is structured in the sense already discussed informally in Section \ref{mainOutline}(a)-(b). We plan to show that for any $e \in S$ there exists a set $K_{e} \subset B_{0}$ of measure $\nu(K_{e}) \gtrapprox 1$ such that $\pi_{e}(\nu|_{K_{e}})$ is "exact dimensional" with dimension $s$, and the restrictions of $\nu|_{(K_{e} \cap T)}$ to many $(\Delta,e)$-tubes $T$ look at least $(1 - s)$-dimensional at the scale $\Delta^{q_{1}} \gg \Delta$. 

Fix $e \in S$. This vector will remain fixed until Section \ref{dimensionSection}. So, until that, it will be convenient to assume that $e = (1,0)$, to call $e$-tubes simply \emph{tubes}, and to abbreviate $\calT_{n}(e) =: \calT$. Then, the tubes in $\calT$ are $\Delta$-neighbourhoods of vertical lines, intersected with $B_{0}$. We now start building the good subset $K_{e}$ mentioned above: it will satisfy $K_{e} \subset \cup \calT$.

\subsubsection{Non-concentration of $\nu$ in $\Delta$-tubes} Recall the dyadic rationals $Q = \{q_{0},\ldots,q_{N}\}$ with $q_{0} = 0$ and $q_{N} = 1$. Write
\begin{displaymath} q := q_{1} \in (0,1) \end{displaymath}
for the smallest non-zero rational in $Q$.
\begin{lemma}\label{NCLemma} The following holds if $f(n + 1)$ is sufficiently large and $\Delta,\tau > 0$ are sufficiently small in terms of $\alpha,f(n),Q$. There exist ("good") subsets $\calT_{G} \subset \calT$ and $K_{G} \subset \cup \calT_{G} \cap (\spt \nu)$ such that
\begin{itemize}
\item[\textup{(NC1)}\phantomsection\label{NC1}] $|\calT_{G}| \approx |\calT|$ and $\nu(K_{G} \cap T) \approx \nu(T)$ for all $T \in \calT_{G}$,
\item[\textup{(NC2)}\phantomsection\label{NC2}] for $T \in \calT_{G}$ and all $x \in \R^{2}$,
\begin{displaymath} \frac{\nu([K_{G} \cap T] \cap B(x,\Delta^{q}))}{\nu(T)} \lessapprox (\Delta^{q})^{d - s(e) - 4\alpha/q}. \end{displaymath}
\end{itemize}
\end{lemma}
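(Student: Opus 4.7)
The plan is to construct $\calT_G$ and $K_G$ by a greedy Vitali-type removal of ``heavy'' $\Delta^{q}$-balls inside each tube, then deduce $|\calT_G| \gtrapprox |\calT|$ by contradiction using the non-badness of $e$ at the pair $(q_{0},q_{1}) = (0,q)$. Concretely, for each $T \in \calT$ I will iteratively select a ball $B(x,\Delta^{q})$ with $\nu(T \cap B(x,\Delta^{q})) > c (\Delta^{q})^{d-s(e)-4\alpha/q}\nu(T)$, record it, and remove its $3$-enlargement from further consideration until no such ball remains. I then set $K_{G} \cap T := (T \cap \spt \nu) \setminus \bigcup (\text{removed enlargements})$ and $\calT_{G} := \{T \in \calT : \nu(K_{G} \cap T) \geq \nu(T)/2\}$. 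Property \textup{(NC2)} holds on $\calT_{G}$ by construction (up to constants absorbed in $\lessapprox$), and the mass-retention half of \textup{(NC1)} is then automatic. Before attacking $|\calT_{G}| \gtrapprox |\calT|$, I will dispose of an easy regime: when $s(e)(1-q) \leq 4\alpha$, the threshold $(\Delta^{q})^{d-s(e)-4\alpha/q}\nu(T)$ exceeds the Frostman ceiling $\nu(B(x,\Delta^{q})) \leq (\Delta^{q})^{d}$, so no heavy ball exists and $\calT_{G} = \calT$ trivially.

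In the remaining regime $s(e)(1-q) > 4\alpha$, I argue by contradiction: suppose $|\calT_{G}| < \Delta^{C\tau}|\calT|$ for a large constant $C$. Then the union of heavy balls across $\calT \setminus \calT_{G}$ carries $\nu$-mass comparable to $\nu(\cup \calT)$, which is $\gtrapprox 1$ by \eqref{form85} and \eqref{form42}. Each heavy $\Delta^{q}$-ball is contained in at most three dyadic $(\Delta^{q},e)$-tubes (its $\pi_{e}$-image is a $2\Delta^{q}$-interval, covered by $\leq 3$ dyadic $\Delta^{q}$-intervals), so the heavy mass is absorbed into a collection $\calU$ of dyadic $(\Delta^{q},e)$-tubes with $\nu(\cup \calU) \gtrapprox 1$. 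Applying Lemma \ref{pigeon} to $\calU$ produces a uniform subfamily $\calU' \subset \calU$ and a parameter $s' \in [0,d]$ with $\nu(U) \in [(\Delta^{q})^{s'}, 2(\Delta^{q})^{s'}]$ for every $U \in \calU'$, and $|\calU'| \gtrapprox (\Delta^{q})^{-s'}$, amply exceeding the cardinality lower bound $(\Delta^{q})^{-s' + f(n+1)\tau}$ demanded by the definition of $(0,q)$-badness.

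The crux, and the main obstacle, will be to show $s' \leq s - \alpha$, after which $\calU'$ witnesses $(0,q)$-badness at every $x_{0}$ in the union of heavy balls, contradicting the stopping-condition bound $\nu(\textbf{Bad}(0,q,e)) < \Delta^{f(n+1)\tau}$. To obtain this exponent bound I would lower-estimate $\nu(U)$ by the combined mass of heavy balls contained in $U$: on average, each $U \in \calU$ contains roughly $|\calT \setminus \calT_{G}|/|\calU|$ many $\Delta$-tubes $T$ with heavy $B_{T} \subset U$, each contributing at least $(\Delta^{q})^{d-s(e)-4\alpha/q}\Delta^{s(e)}$ of $\nu$-mass. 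Unwinding the exponents and invoking the eventual choice $\alpha \gg q$ (recall $q = q_{1} = 2^{-2N}$, while $\alpha$ depends on $N$ and $Q$ and will be taken much larger), I expect the factor $4\alpha/q$ in \textup{(NC2)} to be calibrated precisely so as to convert the heavy-ball mass bound on $\nu(U)$ into the required parameter drop $s' \leq s-\alpha$, while also absorbing the slack $|s - s(e)| \leq \alpha/100$. The bookkeeping is delicate, but this is the only real obstacle; the remainder of the construction is standard Vitali covering plus a single application of Lemma \ref{pigeon}.
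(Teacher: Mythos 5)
Your Vitali construction of $K_{G}$ and $\calT_{G}$ is a reasonable substitute for the paper's rectangle-pigeonhole construction, and the ``easy regime'' observation (if $s(e)(1-q) \leq 4\alpha$ the Frostman bound already forbids heavy balls, so $\calT_{G}=\calT$) is correct. But the contradiction argument for $|\calT_{G}| \gtrapprox |\calT|$ has a genuine gap, and the choice of badness scales $(q_{0},q_{1})=(0,q)$ is the wrong tool.

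The exponent bookkeeping does not close. Under your contradiction hypothesis, each heavy ball carries $\nu$-mass at least of order $(\Delta^{q})^{d-s(e)-4\alpha/q}\Delta^{s(e)} = \Delta^{qd+(1-q)s(e)-4\alpha}$, and the total heavy mass is $\gtrapprox 1$, so $|\calU| \lesssim \Delta^{-qd-(1-q)s(e)+4\alpha}$. Combining this with $|\calU'| \approx (\Delta^{q})^{-s'}$ gives only $s' \leq d + \bigl[(1-q)s(e)-4\alpha\bigr]/q$, which for $q = q_{\mathrm{spec}} = 2^{-2N}$ small is of order $s(e)/q$ -- nowhere near $s-\alpha$. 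Nothing forces the heavy balls to accumulate in few $\Delta^{q}$-tubes; they could be spread so that $s' = s(e)$, and all the constraints coming from heavy-ball counting and pigeonholing are consistent with that. (Your ``on average'' computation is only a self-consistency check: it reduces to $q(d-s(e))\geq 4\alpha$, which holds, but produces no upper bound on $s'$.) The remark ``$\alpha \gg q$'' is also backwards: the paper ultimately requires $\alpha < q_{\mathrm{spec}}(d-D)/3$ (so that $t_{th}<d$ and the exponent $d-s(e)-4\alpha/q_{\mathrm{spec}}$ is positive), hence $\alpha \ll q_{\mathrm{spec}} = q$; with $\alpha\gg q$ the Lemma would be vacuous.

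The paper instead pigeonholes the $\Delta\times\Delta^{q}$ dyadic rectangles inside each tube so that each retained rectangle has $\nu$-mass $\approx m$ with the \emph{same} $m$ across $\calT_{G}$, which yields $|\calT_{G}|\approx|\calT|$ for free, and then proves (NC2) (i.e.\ $m\leq\Delta^{q(d-s(e))+s(e)-4\alpha}$) by contradiction at the pair $(q,1)$: it locates a ball $B=B(x_{0},\Delta^{q})$ with $\nu(K_{G}\cap B)\approx\Delta^{qd}$ and $x_{0}\notin\textbf{Bad}(q,1,e)$, counts the $\Delta$-tubes through $B$ as $\lesssim \Delta^{qd}/m$ using the two-sided uniform rectangle mass, and deduces that $B$ would be $(q,1)$-bad if $m$ were too large. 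Your Vitali construction gives only a one-sided (upper) bound on the mass of $K_{G}\cap T$ inside a $\Delta^{q}$-ball, so this tube-count step is unavailable to you. The pair $(0,q)$ is used elsewhere in the paper (the proof of (B1)(a)), but the over-concentration at scale $\Delta^{q}$ inside a single $\Delta$-tube that (NC2) controls is detected by non-badness at $(q,1)$, not at $(0,q)$.
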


\begin{proof} Cover each tube $T \in \calT$ by a family $\calR_{T}'$ of dyadic rectangles of the form $I \times J$, where $I = \pi_{1}(T)$ (hence $\ell(I) = \Delta$) and $\ell(J) = \Delta^{q}$. By the upper $d$-regularity of $\nu$ (recall \eqref{form39} and \nref{P5}),
\begin{displaymath} \nu(R) \lesssim \Delta^{qd}, \qquad R \in \calR_{T}'. \end{displaymath} 
Now, fixing $T \in \calT$ and noting that $\Delta^{s(e)} \leq \nu(T) \leq 1$, we may use a pigeonholing argument similar to the one used in Lemma \ref{pigeon} to choose a dyadic number $0 \leq m_{T} \leq \Delta^{q d}$ and a subset $\calR_{T} \subset \calR_{T}'$ such that
\begin{itemize}
\item $\nu(R) \sim m_{T}$ for all $R \in \calR_{T}$, and
\item $\nu(\cup \calR_{T}) \approx \nu(T) \sim \Delta^{s(e)}$.
\end{itemize}
(Alternatively, one could apply Lemma \ref{pigeon} directly to the measure $\nu|_{T}$ and the family of all $(\Delta^{q},(0,1))$-tubes intersecting $T$.) We write
\begin{displaymath} T_{G} := \bigcup_{R \in \calR_{T}} R. \end{displaymath}
Next, we run one more pigeonholing argument to make the number $m_{T}$ uniform among the tubes $T \in \calT$: there exists a number $0 \leq m \leq \Delta^{q d}$, and a collection $\calT_{G} \subset \calT$ of cardinality
\begin{equation}\label{form84} |\calT_{G}| \approx |\calT| \quad \text{such that} \quad m_{T} \sim m \, \text{ for all } \, T \in \calT_{G}. \end{equation}
Write
\begin{displaymath} K_{G} := \bigcup_{T \in \calT_{G}} T_{G} = \bigcup_{T \in \calT_{G}} \bigcup_{R \in \calR_{T}} R = \bigcup_{R \in \calR_{G}} R, \end{displaymath} 
where $\calR_{G}$ stands for the union of the collections $\calR_{T}$ with $T \in \calT_{G}$; thus $\nu(R) \sim m$ for all $R \in \calR_{G}$. An immediate consequence of the choices of $\calT_{G}$ and $\calR_{T}$ is that $\nu(K_{G} \cap T) = \nu(T_{G}) \gtrapprox \nu(T) \geq \Delta^{s(e)}$ for all $T \in \calT_{G}$, so now all the points in \nref{NC1} have been addressed. 

It remains to prove \nref{NC2}, and we will do this by showing that
\begin{equation}\label{form87} m \leq \Delta^{q(d - s(e)) + s(e) - 4\alpha}, \end{equation}
if $\Delta > 0$ is small enough, and the function $f$ is sufficiently rapidly increasing. Note that \eqref{form87} implies \nref{NC2}, because $[K_{G} \cap T] \cap B(x,\Delta^{q})$ can always be covered by $\lesssim 1$ rectangles in $\calR$, and $\nu(T) \approx \Delta^{s(e)}$ for $T \in \calT_{G}$. 

The idea behind the proof of \eqref{form87} is the following. The number "$m$" represents the $\nu$-measure of a "typical" vertical rectangle of dimensions $\Delta \times \Delta^{q}$. Using the $d$-quasiregularity of $\nu$, we can calculate the number of such "typical" rectangles intersecting a "typical" ball of radius $\Delta^{q}$ (of $\nu$-measure $\approx \Delta^{dq}$). Then, if $m$ violates \eqref{form87}, it follows that a large part of the $\nu$-measure in such a "typical" $\Delta^{q}$-ball is contained in "rather few" vertical $\Delta$-tubes. Now, a "typical" $\Delta^{q}$-ball is \textbf{not} an $e$-bad ball relative to scale $\Delta$, so in fact only a very small fraction of the $\nu$-measure in such a ball can be covered by "rather few" $\Delta$-tubes. This eventually gives the upper bound \eqref{form87}.

We turn to the details. Start by noting that 
\begin{equation}\label{form22} \nu(K_{G}) = \sum_{T \in \calT_{G}} \nu(K_{G} \cap T) \stackrel{\textup{\nref{NC1}}}{\gtrapprox} |\calT_{G}| \cdot \Delta^{s(e)} \stackrel{\eqref{form84}}{\gtrapprox} \Delta^{f(n)\tau}. \end{equation}
Recalling the definition of the "$\gtrapprox$" notation from \eqref{lessapprox}, this means that $\nu(K_{G}) \gtrsim_{\alpha,Q} \Delta^{C_{\alpha,Q}\tau + f(n)\tau}$ for some constant $C_{\alpha,Q} \geq 1$. With this notation, write
\begin{displaymath} C  := 4[C_{\alpha,Q} + f(n)] \sim_{\alpha,Q} 1. \end{displaymath}
Then, by Lemma \ref{lemma1}, the set
\begin{displaymath} E := E_{\Delta,C\tau}(\Delta^{q}) = \{x \in K_{G} : \nu(K_{G} \cap B(x,\Delta^{q})) < \Delta^{C\tau}(\Delta^{q})^{d}\} \end{displaymath} 
has 
\begin{displaymath} \nu(E) \leq C_{\tau}\Delta^{(C/2)\tau} = C_{\tau}\Delta^{2[C_{\alpha,Q} + f(n)]} < \frac{\nu(K_{G})}{4}, \end{displaymath}
assuming that $\Delta > 0$ is sufficiently small in a way depending on $\alpha,Q$ (and $\tau$, which depends here and will always depend only on $\alpha,Q$). Note also that
\begin{displaymath} \nu(\textbf{Bad}(q,1,e)) < \Delta^{f(n + 1)\tau} < \frac{\nu(K_{G})}{4} \end{displaymath}
by \eqref{badMeasure} if $f(n + 1) \geq C$, and again $\Delta > 0$ is sufficiently small. Now, we infer that there exists a point
\begin{displaymath} x_{0} \in K_{G} \, \setminus \, [\textbf{Bad}(q,1,e) \cup E] \end{displaymath}
which then satisfies
\begin{equation}\label{form23} \nu(K_{G} \cap B(x_{0},\Delta^{q})) \geq \Delta^{q d + C\tau} \end{equation}
by definition of $x_{0} \notin E$. Write
\begin{displaymath} B := B(x_{0},\Delta^{q}). \end{displaymath}
Now, recall that the set $K_{G}$ is a union of the disjoint rectangles $R \in \calR_{G}$ of dimensions $\Delta \times \Delta^{q}$, each satisfying $\nu(R) \sim m$. If $T \in \calT_{G}$ is any tube such that $K_{G} \cap B \cap T \neq \emptyset$, then $K_{G} \cap B$ meets one of these rectangles, say $R_{B,T} \in \calR_{T}$, and evidently $R_{B,T} \subset 10B$. Since the tubes $T \in \calT_{G}$ are disjoint, the corresponding rectangles $R_{B,T}$ are also disjoint, and we find that
\begin{equation}\label{form24} |\{T \in \calT_{G} : K_{G} \cap B \cap T \neq \emptyset\}| \lesssim \frac{\nu(10B)}{m} \lesssim \frac{\Delta^{q d}}{m}. \end{equation}
Consider now the blow-up $\nu^{B}$. According to \eqref{form23}-\eqref{form24}, and noting that $K_{G} \subset \cup \calT_{G}$, there exists a subset of $\nu^{B}$-measure $\geq \Delta^{C\tau}$ which can be covered by $\lesssim \Delta^{q d}/m$ dyadic $\Delta^{1- q}$-tubes. We denote these tubes by $\calT_{n + 1}'$. We claim that
\begin{equation}\label{form26} \frac{\Delta^{q d}}{m} \geq (\Delta^{1 - q})^{-s(e) + 4\alpha}, \end{equation}
which implies \eqref{form87} after re-arranging terms. Assume to the contrary that 
\begin{equation}\label{form27} |\calT_{n + 1}'| \lesssim \frac{\Delta^{qd}}{m} < (\Delta^{1 - q})^{-s(e) + 4\alpha}. \end{equation}
Then, since $\nu^{B}\left( \cup \calT_{n + 1}' \right) \geq \Delta^{C\tau}$, we can apply Lemma \ref{pigeon} to find a constant $0 \leq s_{n + 1}(e) \lesssim 1$ and a subcollection $\calT_{n + 1} \subset \calT_{n + 1}'$ such that
\begin{equation}\label{form30} (\Delta^{1 - q})^{s_{n + 1}(e)} \leq \nu^{B}(T) \leq 2(\Delta^{1 - q})^{s_{n + 1}(e)}, \qquad T \in \calT_{n + 1}, \end{equation}
and 
\begin{displaymath} \nu^{B} \left( \cup \calT_{n + 1} \right) \gtrapprox \Delta^{C\tau}. \end{displaymath}
From \eqref{form27} and \eqref{form83}, we now infer that
\begin{equation}\label{form30b} (\Delta^{1 - q})^{\tfrac{C\tau}{1 - q}} = \Delta^{C\tau} \lessapprox  |\calT_{n + 1}| \cdot (\Delta^{1 - q})^{s_{n + 1}(e)} \lesssim (\Delta^{1 - q})^{-s(e) + 4\alpha + s_{n + 1}(e)}. \end{equation} 
Now, if $\tau > 0$ is sufficiently small in a way depending only on $C \sim_{\alpha,Q} 1$, and noting that $1 - q \geq 1/2$, the exponent $C\tau/(1 - q)$ on the left hand side can be taken less than $\alpha$. Thus,
\begin{equation}\label{form28} (\Delta^{1 - q})^{s_{n + 1}(e) - s(e)} \gtrapprox (\Delta^{1 - q})^{-3\alpha} \quad \Longrightarrow \quad s_{n + 1}(e) \leq s(e) - 2\alpha \leq s - \alpha, \end{equation}  
assuming that $\Delta,\tau > 0$ are sufficiently small, and recalling that $|s(e) - s| \leq \alpha/100$. Moreover, from the leftmost inequality of \eqref{form30b}, and taking $f(n + 1)$ a sufficient amount larger than $C/(1 - q)$ (depending on the implicit constants in \eqref{form30b}), we have
\begin{equation}\label{form29} |\calT_{n + 1}| \gtrapprox (\Delta^{1 - q})^{-s_{n + 1}(e) + \tfrac{C\tau}{1 - q}} \quad \Longrightarrow |\calT_{n + 1}| \geq (\Delta^{1 - q})^{-s_{n + 1}(e) + f(n + 1)\tau},  \end{equation} 
if $\Delta > 0$ is small enough. But the estimates \eqref{form30} and \eqref{form28}-\eqref{form29} combined now literally say that the ball $B = B(x_{0},\Delta^{q})$ is $e$-bad relative to the scale $\Delta = \Delta^{1}$, see Section \ref{badBalls}, and hence $x_{0} \in \textbf{Bad}(q,1,e)$, contradicting the choice of $x_{0}$. This completes the proof of \eqref{form26}, and hence the proof of \nref{NC2} and Lemma \ref{NCLemma} -- except that Lemma \ref{NCLemma} also claims that $K_{G} \subset \spt \nu$. However, this can be achieved by intersecting $K_{G}$, as above, with $\spt \nu$ without affecting either \nref{NC1} or \nref{NC2}. \end{proof}

\subsubsection{Branching of the tubes in $\calT$} Recall the tube family $\calT_{G} \subset \calT$ constructed in Lemma \ref{NCLemma}. We write
\begin{equation}\label{form86} \calT^{0} := \calT_{G}, \quad t_{0} := s(e), \quad \text{and} \quad K_{e}^{0} := K_{G} \subset \spt \nu \cap B_{0}, \end{equation}
then $|\calT^{0}| \approx \Delta^{-t_{0}}$ and $\nu(K_{e}^{0} \cap T) \approx \Delta^{t_{0}}$ for all $T \in \calT^{0}$  by \eqref{form85} and \nref{NC1}. 

As in the previous section, view the vector $e \in S$ as "fixed" -- that is, we omit it from the notation as much as we can. Now, we claim inductively that if the parameter $\tau > 0$ is taken sufficiently small, depending on $\alpha$ and $Q$, then for all $0 \leq k \leq N - 1$ there exists a number $t_{k}$ and a collection of dyadic $\Delta^{q_{N - k}}$-tubes $\calT^{k}$ with the following properties:
\begin{itemize}
\item[(B1)\phantomsection\label{B1}] $|t_{k} - s(e)| \leq 2\alpha/q_{1}$,
\item[(B2)\phantomsection\label{B2}] $|\calT^{k}| \approx (\Delta^{q_{N - k}})^{-t_{k}}$ and $\nu(K_{e}^{k} \cap T) \approx (\Delta^{q_{N - k}})^{t_{k}}$ for $T \in \calT^{k}$, where
\begin{displaymath} K_{e}^{k} := K_{e}^{k - 1} \cap \left( \cup \calT^{k} \right), \qquad 1 \leq k \leq N - 1. \end{displaymath}
\item[(B3)\phantomsection\label{B3}] If $1 \leq k \leq N - 1$ and $T^{k} \in \calT^{k}$, then 
\begin{displaymath} |\calT^{k - 1}(T^{k})| \approx \frac{|\calT^{k - 1}|}{|\calT^{k}|} \approx \Delta^{-t_{k - 1}q_{N - k + 1} + t_{k}q_{N - k}}, \end{displaymath}
where $\calT^{k - 1}(T^{k}) := \{T^{k - 1} \in \calT^{k - 1} : T^{k - 1} \subset T^{k}\}$.
\end{itemize}

\begin{remark} Recall \eqref{lessapprox}: the notation $A \lessapprox B$ means that $A \leq C \Delta^{-C\tau}B$, where the constant $C$ is allowed to depend on $\alpha$ and $Q$. In particular, once we start proving \nref{B1}-\nref{B3} by induction on $k \in \{0,\ldots,N - 1\}$, the implicit constants in the "$\approx$" notation are allowed to get worse as $k$ increases -- because $k$ will only increase up to $N - 1 \lesssim_{Q} 1$. \end{remark}

Noting that $q_{N - 0} = 1$, the choices made in \eqref{form86} evidently satisfy \nref{B1}-\nref{B3}. So, we may assume that the number $t_{k - 1}$ and the collection $\calT^{k - 1}$ have already been found for some $1 \leq k \leq N - 1$. To proceed, we use the pigeonhole principle: there exist a number $t_{k} \in [0,1]$ and collection $\calT^{k}$ of dyadic $\Delta^{q_{N - k}}$-tubes with $|\calT^{k}| \approx (\Delta^{q_{N - k}})^{-t_{k}}$ such that
\begin{equation}\label{form32} |\calT^{k - 1}(T^{k})| \approx \frac{|\calT^{k - 1}|}{|\calT^{k}|} \approx \Delta^{-t_{k - 1}q_{N - k + 1} + t_{k}q_{N - k}}, \qquad T^{k} \in \calT^{k}. \end{equation} 
The second equation in \eqref{form32} used the inductive hypothesis \nref{B2} on $\calT^{k - 1}$. We have now found the objects $t_{k},\calT^{k}$, and established \nref{B3} and the first part of \nref{B2}. The second part of \nref{B2} follows from \eqref{form32}, and the inductive hypothesis \nref{B2} for the tube family $\calT^{k - 1}$: fixing $T \in \calT^{k}$, and defining $K_{e}^{k}$ as in \nref{B2}, we find that
\begin{align*} \nu(K_{e}^{k} \cap T) & = \nu(K_{e}^{k - 1} \cap T)\\
& = \sum_{T' \in \calT^{k - 1}(T)} \nu(K^{k - 1}_{e} \cap T')\\
& \stackrel{\textup{\nref{B2}}}{\approx} |\calT^{k - 1}(T)| \cdot (\Delta^{q_{N - k + 1}})^{t_{k - 1}}\\
& \stackrel{\eqref{form32}}{\approx} (\Delta^{q_{N - k}})^{t_{k}}. \end{align*}
It remains to establish \nref{B1}, which states that the "branching" is roughly constant for all levels $0 \leq k \leq N - 1$. The proof of \nref{B1} bears close similarity to the proof of property \nref{NC2} in Lemma \ref{NCLemma}: if \nref{B1} failed, we would end up finding some bad balls where none should exist. We will prove separately that
\begin{itemize}
\item[(a)] $t_{k} \geq s(e) - 2\alpha$,
\item[(b)] $t_{k} \leq s(e) + 2\alpha/q_{1}$. 
\end{itemize}
We start with the slightly easier task (a), and make a counter assumption: 
\begin{equation}\label{form91} t_{k} < s(e) - 2\alpha. \end{equation}
Write $q := q_{N - k}$. Then, by \nref{B2}, 
\begin{displaymath} |\calT^{k}| \gtrsim (\Delta^{q})^{-t_{k} + C\tau} \quad \text{and} \quad \nu\left( K_{e}^{k} \cap T \right) \gtrsim (\Delta^{q})^{t_{k} + C\tau} \text{ for } T \in \calT^{k}, \end{displaymath}
where $C \geq 1$ is a constant depending only on $\alpha$ and $Q$. In particular, $\nu(K_{e}^{k}) \gtrsim \Delta^{2C\tau}$. Recall from \eqref{badMeasure} that
\begin{displaymath} \nu(\textbf{Bad}(0,q,e)) < \Delta^{f(n + 1)\tau}. \end{displaymath}
Now, we infer from Lemma \ref{lemma1} that the $\nu$ measure of the set
\begin{displaymath} E = E_{\Delta,6C\tau}(1) = \{x \in K_{e}^{k} : \nu(K_{e}^{k} \cap B(x,1)) < \Delta^{6C\tau}\} \end{displaymath}
is bounded by $\nu(E) \leq C_{\tau}\Delta^{3C\tau}$, and in particular $\nu(E) \leq \nu(K_{e}^{k})/4$ if $\Delta > 0$ is small enough. If the function $f$ is rapidly increasing enough, we also have $2C < f(n + 1)$, and hence we may find a point 
\begin{displaymath} x_{0} \in K_{e}^{k} \, \setminus \, \textbf{Bad}(0,q,e) \end{displaymath}
with
\begin{displaymath} \nu \left(K_{e}^{k} \cap B(x_{0},1) \right) \geq \Delta^{6C\tau}. \end{displaymath}
Write $B := B(x_{0},1)$, and assume without loss of generality here that $x_{0} = 0$, so that $\nu^{B} = \nu$ (otherwise some of the tubes below need to be translated by $x_{0}$). Applying Lemma \ref{pigeon} and its corollary \eqref{form83} to the family $\calT = \calT^{k}$ (whose union covers $K_{e}^{k}$), we find a number $0 \leq s_{n + 1}(e) \lesssim 1$ and a subset $\calT_{n + 1} \subset \calT^{k}$ such that
\begin{equation}\label{form88} |\calT_{n + 1}| \gtrapprox (\Delta^{q})^{-s_{n + 1}(e) + 6C\tau/q} \end{equation}
and
\begin{equation}\label{form90} (\Delta^{q})^{s_{n + 1}(e)} \leq \nu^{B}(T) \leq 2(\Delta^{q})^{s_{n + 1}(e)} \text{ for } T \in \calT_{n + 1}. \end{equation}
Since $\calT_{n + 1} \subset \calT^{k}$, and $|\calT^{k}| \lesssim (\Delta^{q})^{-t_{k} - C'\tau}$ by \nref{B2}, we infer from \eqref{form88} and our counter assumption \eqref{form91} that
\begin{equation}\label{form89} s_{n + 1}(e) \leq t_{k} + 7(C + C')\tau/q \leq s(e) - 2\alpha + 7C\tau/q \leq s - \alpha, \end{equation} 
if $\tau > 0$ is sufficiently small in terms of $\alpha$ and the constants $C,C' \sim_{\alpha,Q} 1$. Now, assuming that $f(n + 1)$ is larger than $6C/q$ plus the implicit constants hidden in \eqref{form88}, and then taking $\Delta > 0$ is small enough, \eqref{form88} gives
\begin{displaymath} |\calT_{n + 1}| \geq (\Delta^{q})^{-s_{n + 1}(e) + f(n + 1)\tau}. \end{displaymath}
This combined with \eqref{form90}-\eqref{form89} means that $B$ is an $e$-bad ball relative to the scale $\Delta^{q}$, recall Section \ref{badBalls}. Hence $x_{0} \in \textbf{Bad}(0,q,e)$. This contradiction proves that $t_{k} \geq s(e) - 2\alpha$.

Next, we undertake the task of verifying (b). Assume for contradiction that 
\begin{equation}\label{form33} t := t_{k} > s(e) + \frac{2\alpha}{q_{1}} \quad \Longrightarrow \quad t - s(e) > \frac{2(1 - q)\alpha}{q}, \end{equation}
where we have again written $q := q_{N - k} \geq q_{1}$. Iterating \nref{B3}, and setting $\calT^{0}(T^{k}) := \{T^{0} \in \calT^{0} : T^{0} \subset T^{k}\}$, we have
\begin{equation}\label{form34} \card \calT^{0}(T^{k}) \approx \prod_{j = 1}^{k} \frac{\card \calT_{j - 1}}{\card \calT_{j}} \approx \prod_{j = 1}^{k} \Delta^{-t_{j - 1}q_{N - j + 1} + t_{j}q_{N - j}} = \Delta^{-s(e) + tq} \end{equation} 
for all $T^{k} \in \calT^{k}$. Note here that
\begin{equation}\label{form35} -s(e) + tq = (1 - q) \cdot \left(-s(e) + \frac{q(t - s(e))}{1 - q} \right) > (1 - q) \cdot (-s(e) + 2\alpha) \end{equation}
by \eqref{form33}. This time, we use \eqref{badMeasure} in the form
\begin{displaymath} \nu(\textbf{Bad}(q,1,e)) < \Delta^{f(n + 1)\tau}. \end{displaymath}
On the other hand, by \nref{B2}, $\nu(K^{k}_{e}) \gtrsim \Delta^{C\tau}$ for some constant $C \geq 1$ depending only on $\alpha,Q$. So, we may infer from Lemma \ref{lemma1} that
\begin{displaymath} \nu(\{x \in K_{k}^{e} : \nu(K_{e}^{k} \cap B(x,\Delta^{q})) \leq \Delta^{qd + 4C\tau}\}) = \nu(E_{\Delta,4C\tau}(\Delta^{q})) \lesssim_{\tau} \Delta^{2C\tau}. \end{displaymath}
Consequently, if $f(n + 1) > 2C$ and $\Delta > 0$ is small enough, we may find a point
\begin{equation}\label{form94} x_{0} \in K^{k}_{e} \, \setminus \, \textbf{Bad}(q,1,e) \end{equation}
satisfying
\begin{equation}\label{form36} \nu(K_{e}^{k} \cap B(x_{0},\Delta^{q})) \geq \Delta^{qd + 4C\tau}. \end{equation} 
Write $B := B(x_{0},\Delta^{q})$, and observe that the set $K_{e}^{k} \cap B$ can be covered by at most three $\Delta^{q}$-tubes in the collection $\calT^{k}$, say $T^{k_{1}},T^{k_{2}},T^{k_{3}}$. Consequently
\begin{displaymath} K_{e}^{k} \cap B \subset \left( \cup \calT^{0}(T^{k_{1}}) \right) \cup \left( \cup \calT^{0}(T^{k_{2}}) \right) \cup \left( \cup \calT^{0}(T^{k_{3}}) \right). \end{displaymath}
Then, combining \eqref{form34}-\eqref{form35}, we infer that $K_{e}^{k} \cap B$ can be covered by a total of
\begin{equation}\label{form37} \lesssim (\Delta^{1 - q})^{-s(e) + 2\alpha - C'\tau} \end{equation}
$\Delta$-tubes in $\calT^{0}$, say $\calT_{B}$, where $C' \sim_{\alpha,Q} 1$. Recalling \eqref{form36}, this means that there exists a collection $\calT_{n + 1}'$ of $\Delta^{1 - q}$-tubes, namely the images of the tubes in $\calT_{B}$ under the homothety $B \to B_{0}$, satisfying the cardinality bound \eqref{form37}, such that
\begin{equation}\label{form38} \nu^{B}\left( \cup \calT_{n + 1}' \right) = \nu^{B(x_{0},\Delta^{q})}\left( \cup \calT_{n + 1}' \right) \geq \Delta^{4C\tau}. \end{equation}
This implies that $B$ is an $e$-bad ball relative to the scale $\Delta$ by an argument we have already seen a few times. Namely, combining \eqref{form37}-\eqref{form38} and using Lemma \ref{pigeon} (and its corollary \eqref{form83}), we can find a number $0 \leq s_{n + 1}(e) \lesssim 1$ and a subcollection $\calT_{n + 1} \subset \calT_{n + 1}'$ such that
\begin{equation}\label{form92} (\Delta^{1 - q})^{s_{n + 1}(e)} \leq \nu^{B}(T) \leq 2(\Delta^{1 - q})^{s_{n + 1}(e)}, \qquad T \in \calT_{n + 1}, \end{equation}
and 
\begin{equation}\label{form93} |\calT_{n + 1}| \geq (\Delta^{1 - q})^{-s_{n + 1}(e) + 5C\tau}. \end{equation}
Since $\calT_{n + 1} \subset \calT_{n + 1}'$, we moreover have from \eqref{form37} that
\begin{displaymath} (\Delta^{1 - q})^{-s_{n + 1}(e) + 5C\tau} \leq |\calT_{n + 1}| \leq |\calT_{n + 1}'| \lesssim (\Delta^{1 - q})^{-s(e) + 2\alpha - C'\tau}, \end{displaymath}
whence $s_{n + 1}(e) \leq s(e) - 2\alpha + C''\tau \leq s - \alpha$, assuming $\tau > 0$ small enough. If $f(n + 1) \geq 5C\tau$, a combination of \eqref{form92}-\eqref{form93} now means that $B$ is an $e$-bad ball relative to the scale $\Delta$, recall Section \ref{badBalls}, and hence $x_{0} \in \textbf{Bad}(q,1,e)$. This contradicts the choice of $x_{0}$ in \eqref{form94} and completes the proof of (b), namely that $t_{k} \leq s(e) + 2\alpha/q_{1}$. The proof of the properties \nref{B1}-\nref{B3} is also now complete.

We now set
\begin{equation}\label{form44} K_{e} := K_{e}^{N - 1}, \end{equation}
so in particular $K_{e} \subset K_{G}$ (from Lemma \ref{NCLemma}) and $\nu(K_{e}) \gtrapprox 1$ for all $e \in S$, using \nref{B2}.

\section{Proof of the main theorem}\label{dimensionSection}

\subsection{Preliminaries} The constructions from the previous section only assumed that $\mu \in \calM(B_{0})$ was a $d$-quasiregular measure, and that $\sigma$ was an arbitrary Borel probability measure on $S^{1}$. We now specialise the considerations to prove Theorem \ref{mainTechnical}, whose statement is repeated below:
\begin{thm} Let $0 \leq d \leq 2$, and let $\mu \in \calM(B_{0})$ be a $d$-quasiregular measure, and write $K := \spt \mu$. Then
\begin{displaymath} \Hd \{e \in S^{1} : \Ad \pi_{e}(K) < \min\{d,1\}\} = 0. \end{displaymath}
\end{thm}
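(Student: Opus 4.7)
The plan is a contradiction argument, following the blueprint of Section \ref{outline}. Suppose
$$\Hd \{e \in S^{1} : \Ad \pi_{e}(K) < \min\{d,1\}\} = \epsilon_{0} > 0,$$
pick $D < \min\{d,1\}$ so that $S_{\ast} := \{e : \Ad \pi_{e}(K) \leq D\}$ still has $\Hd S_{\ast} \geq \epsilon_{0}/2$, and let $\sigma$ be a Frostman measure of exponent $\epsilon_{0}/4$ on $S_{\ast}$. With $\sigma$ fixed, choose first $\alpha > 0$ much smaller than $\min\{d,1\} - D$, then the rational grid $Q = \{0, 2^{-2N}, 2^{-N+2}, \ldots, 1\}$ with $N$ large enough to resolve $\alpha$ and $\epsilon_{0}$, and only at the end take $\tau > 0$ (equivalently the initial scale $\delta > 0$) small enough to absorb every $\Delta^{-C\tau}$ loss. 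Feeding $(\mu,\sigma)$ into the induction of Section \ref{tangentSection} produces the blow-up $\nu = \mu^{B}$ (still $d$-quasiregular by Lemma \ref{lemma2}), a terminal scale $\Delta$, a subset $S \subset S_{\ast}$ with $\sigma(S) \gtrapprox 1$, and a common dimensional parameter $s$ with $|s - s(e)| \leq \alpha/100$ for every $e \in S$. Since $|\calT(e)| \leq N(\pi_{e}(\spt \nu), \Delta)$ and $\Ad \pi_{e}(\spt \nu) \leq \Ad \pi_{e}(K) \leq D$ (monotonicity of Assouad dimension and its invariance under the homothety $T_{B}$), we additionally get
$$s \leq D + O(\alpha + \tau),$$
which will drive the final contradiction. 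The outputs of Lemma \ref{NCLemma} and \nref{B1}--\nref{B3} supply non-concentration and uniform-branching across all intermediate scales $\Delta^{q_{k}}$.

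Fix $e_{0} \in S$, rotate so $e_{0} = (1,0)$, and let $K_{e_{0}} = K_{e_{0}}^{N-1}$ with $\nu(K_{e_{0}}) \gtrapprox 1$. The next and geometrically most substantial stage is to construct from $\nu$ a genuine product measure $\nu_{1} \times \nu_{2}$ on $\R^{2}$ satisfying (up to harmless $\Delta^{-C\tau}$ losses):
\begin{itemize}
\item[(1)] $\nu_{1}$ is supported on $\spt \pi_{e_{0}}(\nu)$ and inherits the quantitative porosity forced by $\Ad \pi_{e_{0}}(K) \leq D < 1$;
\item[(2)] $\nu_{2}$ is supported on a typical vertical $\pi_{e_{0}}$-fibre of $\nu$, so that upper $d$-regularity of $\nu$ combined with \nref{B2} at scale $\Delta$ gives
$$\log N(\spt \nu_{2}, \Delta) \geq (d - s - O(\alpha)) \log(1/\Delta);$$
\item[(3)] for every $e$ in an appropriate neighbourhood of $e_{0}$, the $\Delta$-covering number of $\pi_{e}(\spt \nu_{1} \times \spt \nu_{2})$ is comparable to that of $\pi_{e}(\spt \nu)$.
\end{itemize}
The construction follows the quasi-product scheme of Section \ref{outline}: inside a generic vertical $\rho$-tube $T = J \times [0,1]$ (with $\rho$ an intermediate scale of the form $\Delta^{q_{k}}$) slice $T \cap \spt \nu$ into horizontal $\rho$-slabs $\bar P_{h}$, $h \in \calD$, and observe that once the $\Delta$-covering number of $\pi_{e_{0}}(\bar P)$ matches each $|\bar P_{h}|$ -- a condition guaranteed by the constant-branching output \nref{B1} at the intermediate scale -- the sets $\bar P_{h}$ must satisfy $|\bar P_{h} \cap \bar P_{h'}| \approx |\bar P_{h}|$ for nearly every pair and so may be replaced by a single common set without disturbing any projection in direction $|e - e_{0}| \lesssim \rho$. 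This is where the "(a')"-type intermediate-scale information enters.

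With $\nu_{1} \times \nu_{2}$ in hand, the contradiction is short. Because $\sigma(S) \gtrapprox 1$ and $\sigma$ is $\epsilon_{0}/4$-Frostman, we may pick $e \in S$ inside the neighbourhood of $e_{0}$ required by (3) yet at positive distance from $\{(1,0),(0,1)\}$ (a Frostman measure cannot concentrate at a point). Using (3), the identity $\pi_{e_{0}}(\nu_{1} \times \nu_{2}) = \nu_{1}$ and the near-constancy $|s(e) - s(e_{0})| \leq \alpha/50$,
$$\log N(\pi_{e}(\spt \nu_{1} \times \spt \nu_{2}), \Delta) \approx s \log(1/\Delta) \approx \log N(\spt \nu_{1}, \Delta).$$
This is precisely the near-entropy-equality hypothesis of Shmerkin's inverse theorem \cite[Theorem 2.1]{Sh} applied to $\nu_{1} \times \nu_{2}$ in direction $e$. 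Its conclusion partitions the dyadic scales in $[\Delta,1]$ into a set $\calU$ on which $\nu_{1}$ is uniform (close to Lebesgue on a union of dyadic intervals) and a set $\calV$ on which $\nu_{2}$ is singular (atomic in entropy). The quantitative porosity of $\spt \nu_{1}$ forbids uniformity at \emph{every} scale, so $\calU$ is negligible and $\calV$ covers essentially all scales, forcing $\log N(\spt \nu_{2}, \Delta) \lesssim \alpha \log(1/\Delta)$. Combined with (2), this yields $d - s \lesssim \alpha$; together with $s \leq D + O(\alpha + \tau)$ we conclude $d - D \lesssim \alpha$, contradicting $\alpha \ll d - D$. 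Hence $\Hd\{e : \Ad \pi_{e}(K) < \min\{d,1\}\} = 0$ and Theorem \ref{mainTechnical} follows. The genuinely hard step is the quasi-product construction and the verification of (3) for \emph{all} nearby $e$; by comparison, the application of Shmerkin's theorem and the accounting of $N$, $\alpha$, $\tau$ are routine.
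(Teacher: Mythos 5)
Your setup (counter-assumption, Frostman measure, parameter hierarchy, the induction producing $\nu,\Delta,S,s$, the bound $s \leq D + O(\alpha)$, and the quasi-product scheme fed into Shmerkin's inverse theorem) matches the paper's outline, but the end-game has a genuine gap: the single application of Shmerkin's theorem you describe does \emph{not} contradict the counter-assumption $\calH^{\epsilon_{0}}_{\infty}(S_{0})>0$. What it proves, once the details are filled in, is a \emph{local concentration} statement: for a fixed $e_{1} \in S$ and a fixed arc $J_{1}$ of length $\Delta^{p}$, the portion of $S$ in $J_{1}$ whose ``slabs'' $K_{e}$ overlap $K_{e_{1}}$ substantially must actually live inside $B(e_{1}, \Delta^{q-\epsilon_{1}})$ with $q = 2p$. (Your derivation of $d-D\lesssim\alpha$ under the hypothesis that $e$ lies in the annulus $\Delta^{q-\epsilon_{1}}<|e-e_{1}|\lesssim\Delta^{p}$ is precisely this local step, stated in different clothes; the ``contradiction with $\alpha\ll d-D$'' only tells you the annulus is empty of useful directions, not that $S_{0}$ has dimension zero.) Comparing a mass $\gtrapprox\Delta^{p}$ to the Frostman bound $\lesssim\Delta^{(q-\epsilon_{1})\epsilon_{0}}$ fails for small $\epsilon_{0}$, so the local step alone is not enough. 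The paper's Section \ref{conclusion} then iterates this concentration $N-2$ times along the grid $Q = \{q_{2},\ldots,q_{N}\}$: each step squares the arc length while losing only a factor $\Delta^{\epsilon_{1}}$ in mass, and after all steps a $\Delta$-arc carries mass $\gtrapprox\Delta^{O(N2^{-N})}$, which finally contradicts the Frostman bound $\lesssim\Delta^{\epsilon_{0}}$ because $10N\cdot 2^{-N}<\epsilon_{0}$. Your proposal omits this iteration, which is indispensable.

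A second, related gap is the sentence ``we may pick $e\in S$ inside the neighbourhood of $e_{0}$ required by (3) yet at positive distance from $\{(1,0),(0,1)\}$ (a Frostman measure cannot concentrate at a point).'' In the rescaled picture obtained by the affine map $A$ of \eqref{affine}, ``positive distance'' for the effective direction $e'$ means $|e-e_{0}|\gtrsim\Delta^{q-\epsilon_{1}}$, while (3) requires $|e-e_{0}|\lesssim\Delta^{p}$; both thresholds shrink to zero with $\Delta$, so no absolute nonatomicity of $\sigma$ produces a point in the annulus. The paper secures this by first pigeonholing a $\sigma$-heavy arc $J_{1}$ of length $\Delta^{q_{2}}$, then using the double-integral $\iint \nu(K_{e}\cap K_{e'})\,d\sigma\,d\sigma$ to locate a fixed $e_{1}\in J_{1}$ for which $\nu(K_{e}\cap K_{e_{1}})\gtrapprox 1$ on a substantial subset $S_{1}\subset J_{1}$; the counter-assumption to Proposition \ref{prop2} then supplies the $e$ you need. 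Both pieces are missing from your proposal, and without them the Shmerkin step has nothing to bite on. Finally, a smaller inaccuracy: the paper does not produce a genuine product measure $\nu_{1}\times\nu_{2}$; it produces a pair of discrete measures $\mu_{h},\mu_{v}$ supported on grids, whose product dominates (up to $\Delta^{-C_{N}\alpha}$ losses) a weighted discretisation $\nu_{G}$ of the relevant piece of $\nu$. The covering-number comparability in your item (3) is obtained for $\spt\nu_{G}$, not for $\spt(\nu_{1}\times\nu_{2})$, and it is proved via the inclusion \eqref{form65} together with the bad-ball argument giving \eqref{form58}, not by ``replacing the $\bar P_{h}$ by a single common set.''
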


\subsubsection{Some standard reductions}\label{s:reductions} First, by the countable stability of Hausdorff dimension, it suffices to fix a number $0 < D < \min\{d,1\}$ and prove that $\Hd S_{0} = 0$, where 
\begin{displaymath} S_{0} := \{e \in S^{1} : \Ad \pi_{e}(K) < D\}. \end{displaymath}
We make a counter assumption: $\calH^{\epsilon_{0}}_{\infty}(S_{0}) > 0$ for some $\epsilon_{0} > 0$. Then we fix a scale $\delta > 0$, which needs to be assumed small in a manner depending, eventually, on 
\begin{equation}\label{data} \epsilon_{0}, \: d - D,\: 1 - D,\: \calH^{\epsilon_{0}}_{\infty}(S_{0}), \quad \text{and} \quad C_{E} \end{equation}
in the upcoming estimate in \eqref{form43}. We stated in Section \ref{preliminaries} that $\delta$ also needs to be chosen small enough relative to $\alpha$ and $Q$, but these parameters will only depend on the constants in \eqref{data}. Then, we choose, using Frostman's lemma, a Borel probability measure $\sigma \in \calM(S_{0})$ satisfying 
\begin{equation}\label{frostman} \sigma(B(e,r)) \leq C_{\sigma}r^{\epsilon_{0}}, \qquad e \in S^{1}, \: \delta < r \leq 1, \end{equation} 
for some constant $C_{\sigma} \geq 1$ depending only on $\calH^{\epsilon_{0}}_{\infty}(S_{0})$. The reader may check that $S_{0}$ is Borel, and apply the standard version of Frostman's lemma. But since we only need \eqref{frostman} for the scales $\delta < r \leq 1$, no measurability is really needed: the fact that $\calH^{\epsilon_{0}}_{\infty}(S_{0}) = c > 0$ can be used to find a \emph{$(\delta,\epsilon_{0})$-set} $P \subset S_{0}$ of cardinality $|P| \sim_{c} \delta^{-\epsilon_{0}}$, see the proof of \cite[Proposition A.1]{FaO}, and then the choice $\sigma = |P|^{-1}\calH^{0}|_{P} \in \calM(S_{0})$ satisfies \eqref{frostman}. We also need to quantify the fact that $\Ad \pi_{e}(K) < D$ for $e \in S_{0}$. In fact, we may assume that the following inequality holds for $\sigma$ almost all $e \in S_{0}$, and for all $x \in \R$ and $0 < r < R < \infty$:
\begin{equation}\label{form43} N(\pi_{e}(K) \cap B(x,R),r) \leq C_{E}\left(\frac{R}{r} \right)^{D}, \end{equation}
Here $C_{E} \geq 1$ is a constant independent of $e \in S_{0}$. Of course, by definition of $e \in S_{0}$, the inequality \eqref{form43} holds for with a constant depending on $e$, but we may restrict (and re-normalise) $\sigma$ to a positive measure set to make the constant uniform.

\subsection{Fixing the parameters and refining the tube families from \nref{B1}-\nref{B3}}\label{refining} We now let $\alpha > 0$ and $N \in \N$ be parameters depending on the difference $D - d$ and $\epsilon_{0} > 0$, and we let $Q$ be the collection of dyadic rationals
\begin{displaymath} Q := \{q_{0},q_{1},\ldots,q_{N}\} := \{0,q_{\text{spec}},2^{-N + 2},2^{-N + 3},\ldots,2^{-1},1\}. \end{displaymath}
For concreteness, set
\begin{equation}\label{qSpec} q_{\text{spec}} := 2^{-2N}, \end{equation}
and note that $q_{1} = q_{\text{spec}}$ with this notation; the strange choice of starting with $2^{-N + 2}$ is only needed to achieve $|Q| = N + 1$. We choose $N \geq 1$ so large that
\begin{equation}\label{N} 10N \cdot 2^{-N} < \epsilon_{0}. \end{equation}
As before, we let $\tau > 0$ be a small parameter depending on $\alpha,Q$. The role of $\tau$ will be to mitigate various constants depending on $\alpha$ and $Q$. We now perform the inductive construction from Section \ref{tangentSection}, relative to the measures $\mu$ and $\sigma$, thus finding the objects
\begin{itemize}
\item $\nu = \mu^{B(x_{0},R_{0})} \in \calM(\R^{2})$,
\item $\Delta > 0$ and $0 \leq s \lesssim 1$,
\item $S \subset \spt \sigma \subset S^{1}$ with $\sigma(S) \gtrapprox 1$,
\item $0 \leq s(e) \leq s + \alpha/100$ for $e \in S$. 
\end{itemize}
We also construct the sets $K_{e} \subset \spt \nu \cap B_{0}$, $e \in S$, as in the previous section, recall \eqref{form44}. We present here the properties of $K_{e}$ that we will use (and justify them afterwards):
\begin{itemize}
\item[(K1)\phantomsection\label{K1}] For every $0 \leq k \leq N - 1$, the set $K_{e}$ can be covered by a collection $\calT^{k}(e)$ of dyadic $(\Delta^{q_{N - k}},e)$-tubes such that $\cup \calT^{k - 1}(e) \subset \cup \calT^{k}(e)$ for $1 \leq k \leq N - 1$, and
\begin{displaymath} |\calT^{k}(e)| \approx (\Delta^{q_{N - k}})^{-t_{k}(e)} \quad \text{and} \quad \nu(K_{e} \cap T) \approx (\Delta^{q_{N - k}})^{t_{k}(e)} \text{ for } T \in \calT^{k}(e). \end{displaymath}
Moreover, $|t_{k}(e) - s(e)| \leq \alpha/q_{\text{spec}}$ for $0 \leq k \le N - 1$, and $|\calT^{k - 1}(T^{k})| \approx |\calT^{k - 1}|/|\calT^{k}|$ for $1 \leq k \leq N - 1$,  where $\calT^{k - 1}(T^{k}) = \{T^{k - 1} \in \calT^{k - 1} : T^{k - 1} \subset T^{k}\}$.
\item[(K2)\phantomsection\label{K2}] If $T \in \calT^{0}(e)$ and $x \in \R^{2}$, then $\nu(T) \approx \Delta^{s(e)}$, and
\begin{displaymath} \frac{\nu([K_{e} \cap T] \cap B(x,\Delta^{q_{\text{spec}}}))}{\nu(T)} \lesssim (\Delta^{q_{\text{spec}}})^{d - s(e) - 4\alpha/q_{\text{spec}}}. \end{displaymath}
\end{itemize}
Fix $e \in S$ and, for the moment, write $\tilde{\calT}^{k} := \tilde{\calT}^{k}(e)$, $0 \leq k \leq N - 1$, for the tube collections constructed in the previous section, satisfying \nref{B1}-\nref{B3}. Claim \nref{K2} works for all $T \in \tilde{\calT}^{0}(e) = \calT_{G}$ by Lemma \ref{NCLemma}, noting that $K_{e} \subset K_{G}$ by \eqref{form86} and \nref{B2}.

Some of the claims in \nref{K1} do not work directly for the collections $\tilde{\calT}^{k}(e)$, but they will work for suitable subsets $\calT^{k}(e) \subset \tilde{\calT}^{k}(e)$. The first problem is that nothing in the construction of the collections $\tilde{\calT}^{k}(e)$ guarantees \emph{a priori} that $\cup \tilde{\calT}^{k - 1}(e) \subset \cup \tilde{\calT}^{k}(e)$ for $1 \leq k \leq N - 1$. To obtain this inclusion (claimed in \nref{K1}), we refine the collections $\tilde{\calT}^{k}(e)$ once more "from top down" into the final collections $\calT^{k}(e)$. We remind the reader here that all tubes considered are dyadic, and we omit "$e$" from the notation for the moment. Set $\calT^{N - 1} := \tilde{\calT}^{N - 1}$. Then, let $\calT^{N - 2} := \{T \in \tilde{\calT}^{N - 2} : T \subset \cup \calT^{N - 1}\}$. By \nref{B3}, we infer that
\begin{equation}\label{form95} |\calT^{N - 2}| \approx |\calT^{N - 1}| \cdot \frac{|\tilde{\calT}^{N - 2}|}{|\tilde{\calT}^{N - 1}|} = |\tilde{\calT}^{N - 2}|. \end{equation}
Now, we continue in the same way, including in $\calT^{N - j}$ only those tubes from $\tilde{\calT}^{N - j}$ contained in $\cup \calT^{N - j + 1}$. Repeating the calculation in \eqref{form95}, and assuming inductively that $|\calT^{N - j + 1}| \approx |\tilde{\calT}^{N - j + 1}|$, we find that $|\calT^{N - j}| \approx |\tilde{\calT}^{N - j}|$ for $1 \leq j \leq N - 1$. This completes the construction of the subfamilies $\calT^{k} \subset \tilde{\calT}^{k}$, $0 \leq k \leq N - 1$. It is immediate from the construction that
\begin{equation}\label{form97} \cup \calT^{k} = \bigcap_{j = k}^{N - 1} \cup \tilde{\calT}^{j}, \qquad 0 \leq k \leq N - 1. \end{equation}
It follows from \eqref{form97} and the formula 
\begin{equation}\label{form96} K_{e}^{k} = K_{e}^{k - 1} \cap \left(\cup \tilde{\calT}^{k} \right), \qquad 1 \leq k \leq N - 1, \end{equation}
in \nref{B2} that $K_{e} = K_{e}^{N - 1}$ (as defined in \eqref{form44}) is covered by the tubes in $\calT^{k}$:
\begin{displaymath} K_{e} = K_{e}^{N - 1} = K_{e}^{N - 2} \cap \left( \cup \tilde{\calT}^{N - 1} \right) = \ldots = K_{e}^{0} \cap \bigcap_{j = 1}^{N} \cup \tilde{\calT}^{j} \subset \bigcap_{j = 0}^{N} \tilde{\calT}^{j} \subset \cup \calT^{k}, \quad 0 \leq k \leq N - 1. \end{displaymath}
Now, everything about \nref{K1}-\nref{K2} is clear, except the lower bound for $\nu(K_{e} \cap T)$ in \nref{K1}. In \nref{B2}, we established that $\nu(K_{e}^{k} \cap T) \gtrapprox (\Delta^{q_{N - k}})^{t_{k}(e)}$ for all $T \in \tilde{\calT}^{k}$, but it is generally possible that $K_{e} \subsetneq K_{e}^{k}$ and even $K_{e} \cap T = \emptyset$ for some $T \in \tilde{\calT}^{k}$. This is, in fact, the main reason why we needed to refine $\tilde{\calT}^{k}$ into $\calT^{k}$. Namely, if $T \in \calT^{k}$, we can apply \eqref{form96} repeatedly, and finally \eqref{form97}, to obtain
\begin{align*} K_{e} \cap T & = K_{e}^{N - 1} \cap T = K_{e}^{N - 2} \cap \left(\cup \calT^{N - 1} \right) \cap T\\
& = \ldots = K_{e}^{k} \cap \left( \bigcap_{j = k + 1}^{N - 1} \cup \tilde{\calT}^{j} \right) \cap T = K_{e}^{k} \cap T, \end{align*} 
noting in the last equation that $T \subset \cup \calT^{k}$ is contained in the big intersection by \eqref{form97}. Thus, $\nu(K_{e} \cap T) = \nu(K_{e}^{k} \cap T) \gtrapprox (\Delta^{q_{N - k}})^{t_{k}(e)}$ for $T \in \calT^{k}$, as desired.

\subsubsection{Heuristics: how to contradict the positive dimensionality of $\sigma$?}\label{contradictionHeuristic} We now explain, a little heuristically, how we will contradict the Frostman condition \eqref{frostman} for any $\epsilon_{0} > 0$. A completely rigorous argument is given at the very end of the paper, in Section \ref{conclusion}. Recall that $\sigma(S) \gtrapprox 1$. Hence, there exists an arc $J_{1} \subset S^{1}$ of length 
\begin{equation}\label{form98} \calH^{1}(J_{1}) = \Delta^{q_{2}} = \Delta^{2^{-N + 2}} \quad \text{with} \quad \sigma(J_{1} \cap S) \gtrapprox \Delta^{q_{2}}. \end{equation}
(We use $q_{2}$ here because $q_{1} = q_{\text{spec}}$ will play a somewhat different role than the other elements in $Q$.) After this, we can completely forget about what happens outside $J_{1}$; we aim to show that there is another arc $J_{2} \subset J_{1}$ of length 
\begin{displaymath} \calH^{1}(J_{2}) \sim \Delta^{q_{3}} = \Delta^{-2^{-N + 3}} = (\Delta^{q_{2}})^{2} \end{displaymath}
such that $\sigma(J_{2}) \gtrapprox \sigma(J_{1})$. Then, we will repeat the trick $N - 1$ times to find a single $\Delta^{q_{N}} = \Delta$-arc $J_{N - 1}$ which satisfies $\sigma(J_{N - 1}) \gtrapprox \sigma(J_{N - 2}) \gtrapprox \ldots \gtrapprox \sigma(J_{1})$. It follows from \eqref{form98} that
\begin{displaymath} \Delta^{2^{-N + 2}} = \Delta^{q_{2}} \lessapprox \sigma(J_{1}) \lessapprox \sigma(J_{N - 1}) \lesssim \Delta^{\epsilon_{0}}, \end{displaymath}
using also the Frostman condition \eqref{frostman}. Since $2^{-N + 2}$ is a lot smaller than $\epsilon_{0}$ by the choice made in \eqref{N}, this will give a contradiction.

\subsection{The core argument begins}\label{core} We start by observing that 
\begin{equation}\label{form45} s \leq D + \alpha. \end{equation}
Indeed, this readily follows from the estimate in \nref{K1} for the tube collection $\calT^{0}(e)$, namely
\begin{displaymath} |\calT^{0}(e)| \gtrapprox \Delta^{-t_{0}(e)} = \Delta^{-s(e)}. \end{displaymath}
(Recall from \eqref{form86} that $t_{0}(e) = s(e)$). Since all the tubes in $\calT^{0}$ have positive $\nu$-measure, each of them contains a point in $\spt \nu$. Consequently, recalling that $\nu = \mu^{B(x_{0},R)}$,
\begin{displaymath} \Delta^{-s(e)} \lessapprox N(\pi_{e}(\spt \nu \cap B_{0}),\Delta) = N(\pi_{e}(\spt \mu \cap B(x_{0},R)),R\Delta) \leq C_{E}\left(\frac{R}{R\Delta} \right)^{D} = C_{E}\Delta^{-D}. \end{displaymath}
Since $|s(e) - s| \leq \alpha/100$, we deduce \eqref{form45} if $\Delta > 0$ is sufficiently small. We then pick $0 < \alpha < (d - D)/2$ so \eqref{form45} implies 
\begin{equation}\label{form45b} s < \frac{d + D}{2}. \end{equation}

We now fix any rational 
\begin{displaymath} p := q_{i} \in \{q_{2},\ldots,q_{N - 1}\} \quad  \text{and also write} \quad q := q_{i + 1} = 2p \in Q. \end{displaymath}
We also fix an arc $J_{1} \subset S^{1}$ of length $\ell(J_{1}) = \Delta^{p}$, and another auxiliary parameter 
\begin{equation}\label{epsilonOne} \epsilon_{1} := 2^{-N}. \end{equation}
(If we were short on letters, we could easily replace $\epsilon_{1}$ by $q_{1} = 2^{-N + 2}$ below, but since this $\epsilon_{1}$ has a different role to play than $q_{1}$, we prefer to give it a different letter.) We claim that if $\alpha$ is chosen sufficiently small, depending on $N$ -- which only depends on $\epsilon_{0}$ -- then there exists an arc $J_{2} \subset J_{1}$ of length $\ell(J_{2}) = \Delta^{q} = (\Delta^{p})^{2}$ such that
\begin{equation}\label{form99} \sigma(J_{2} \cap S) \gtrapprox \Delta^{\epsilon_{1}}\sigma(J_{1} \cap S). \end{equation}
We begin the efforts to find $J_{2}$. Since 
\begin{displaymath} \nu(B_{0}) \leq 1, \quad K_{e} \subset B_{0}, \quad \text{and} \quad \nu(K_{e}) \gtrapprox 1 \end{displaymath}
for all $e \in J_{1} \cap S$, we may estimate as follows:
\begin{align*} \int_{J_{1} \cap S} \int_{J_{1} \cap S} \nu(K_{e} \cap K_{e'}) \, d\sigma(e) \, d\sigma(e') & = \int_{B_{0}} \int_{J_{1} \cap S} \int_{J_{1} \cap S} \chi_{K_{e} \cap K_{e'}}(x) \, d\sigma(e) \, d\sigma(e') \, d\nu(x)\\
& = \int_{B_{0}} \left(\int_{J_{1} \cap S} \chi_{K_{e}}(x) \, d\sigma(e) \right)^{2} \, d\nu(x)\\
& \geq \left( \int_{B_{0}} \int_{J_{1} \cap S} \chi_{K_{e}}(x) \, d\sigma(e) \, d\nu(x) \right)^{2}\\
& = \left( \int_{J_{1} \cap S} \nu(K_{e}) \, d\sigma(e) \right)^{2} \gtrapprox \sigma(J_{1} \cap S)^{2}. \end{align*} 
This first implies the existence of $e_{1} \in J_{1} \cap S$ with
\begin{displaymath} \int_{J_{1} \cap S} \nu(K_{e} \cap K_{e_{1}}) \, d\sigma(e) \gtrapprox \sigma(J_{1} \cap S), \end{displaymath}
and then the existence of a subset 
\begin{equation}\label{form100} S_{1} \subset J_{1} \cap S \quad \text{with} \quad \sigma(S_{1}) \gtrapprox \sigma(J_{1} \cap S) \end{equation}
such that
\begin{equation}\label{form46} \nu(K_{e} \cap K_{e_{1}}) \gtrapprox 1, \qquad e \in S_{1}. \end{equation}
Proving the next proposition is the main remaining challenge: it states that a substantial fraction of $\sigma$-mass in $J_{1} \cap S$ is contained surprisingly close to $e_{1}$:
\begin{proposition}\label{prop2} If the parameters 
\begin{displaymath} \alpha = \alpha(C_{E},d,D,N) > 0 \quad \text{and} \quad \tau = \tau(\alpha,C_{E},d,D,N) > 0 \end{displaymath}
are sufficiently small, and $\Delta > 0$ is sufficiently small (that is, $\delta > 0$ was chosen sufficiently small depending on $C_{E},d,D,N$), then
\begin{equation}\label{form47} S_{1} \subset B(e_{1},\Delta^{q - \epsilon_{1}}). \end{equation}
\end{proposition}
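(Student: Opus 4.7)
The plan is to argue by contradiction: suppose there is some $e_{*} \in S_{1}$ with $|e_{*} - e_{1}| > \Delta^{q - \epsilon_{1}}$. After rotating coordinates so that $e_{1} = (1,0)$, the separation $|e_{*} - e_{1}|$ lies in the window $(\Delta^{q - \epsilon_{1}}, \Delta^{p}]$, since $e_{*}, e_{1} \in J_{1}$ and $\ell(J_{1}) = \Delta^{p}$. The overall strategy mirrors the heuristic in Section \ref{mainOutline}: inside a typical vertical $\Delta^{p}$-tube we extract from $\nu$ an approximate product measure $\nu_{1} \times \nu_{2}$ to which Shmerkin's inverse theorem \cite[Theorem 2.1]{Sh} applies in direction $e_{*}$; the porosity of $\pi_{e_{1}}(\spt \mu)$ coming from \eqref{form43} will rule out "uniform" behaviour of $\nu_{1}$ on every scale, forcing $\dim \nu_{2}$ to be nearly zero, which contradicts the fibre-dimension lower bound produced by the branching \nref{K1}.

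The first step is a localisation: using $\nu(K_{e_{*}} \cap K_{e_{1}}) \gtrapprox 1$ from \eqref{form46} and the $d$-quasiregularity of $\nu$, an averaging argument produces a vertical $\Delta^{p}$-tube $T$ such that $\nu(K_{e_{*}} \cap K_{e_{1}} \cap T) \gtrapprox \Delta^{pd}$ and such that the branching cardinalities and covering bounds from \nref{K1} are still roughly attained by $\calT^{k}(e_{1}) \cap T$ and $\calT^{k}(e_{*}) \cap T$ for the relevant scales $\Delta^{q_{N - k}} \in \{\Delta^{p}, \Delta^{q}, \Delta\}$. The second step is the quasi-product construction of \cite[Section 1.3]{Or} inside $T$ at resolution $\Delta^{q}$: letting $\calD$ be a $\Delta^{p}$-net in the vertical coordinate of $K_{e_{1}} \cap T$, one obtains a set $\bar{P} = \bigcup_{h \in \calD} \bar{P}_{h} \times \{h\}$ whose $\Delta^{q}$-projections in every direction $e$ with $|e - e_{1}| \leq \Delta^{p}$ are comparable to those of $K_{e_{1}} \cap T$. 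The crucial use of the uniform branching of \nref{K1} at the consecutive scales $q_{i} = p$ and $q_{i + 1} = q$ is to certify that $N(\pi_{e_{1}}(\bar{P}), \Delta^{q}) \approx |\bar{P}_{h}|$ for nearly all $h \in \calD$; this forces $|\bar{P}_{h} \cap \bar{P}_{h'}| \approx |\bar{P}_{h}|$ for nearly all pairs, so $\bar{P}$ is an approximate product. Affinely rescaling $T$ to $B_{0}$ turns $\bar{P}$ into the support of an approximate product measure $\nu_{1} \times \nu_{2}$ at resolution $\delta' := \Delta^{1 - p}$.

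The third step is to unpack the properties of $\nu_{1}$ and $\nu_{2}$. The horizontal branching in \nref{K1} yields $\dim_{\delta'} \nu_{1} \approx s$ (with error $O(\alpha / q_{\mathrm{spec}})$), while the fibrewise branching yields $\dim_{\delta'} \nu_{2} \gtrsim (d - s) - O(\alpha / q_{\mathrm{spec}})$, which is strictly positive once $\alpha$ is small in terms of $d - D$, by \eqref{form45b}. The uniform bound \eqref{form43} for $\pi_{e_{1}}(K)$ endows $\spt \nu_{1}$ with a quantitative porosity on every scale $\geq \delta'$, which, combined with \nref{K2} and \eqref{form45b}, prevents $\nu_{1}$ from being $\eta$-uniform at any single scale for some $\eta = \eta(d, D) > 0$. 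For the rescaled direction $\tilde{e}_{*}$ corresponding to $e_{*}$ one has $|\tilde{e}_{*} - (1,0)| \sim \Delta^{p - \epsilon_{1}}$, comfortably bounded away from the vertical on the full range of scales $[\delta', 1]$, and item (3) of Section \ref{mainOutline} gives the matching of $\delta'$-entropies $\dim_{\delta'} \pi_{\tilde{e}_{*}}(\nu_{1} \times \nu_{2}) \approx s \approx \dim_{\delta'} \nu_{1}$. Shmerkin's inverse theorem then splits the $\log(1/\delta')$ scales into a "uniform for $\nu_{1}$" part and a "singular for $\nu_{2}$" part; porosity kills the first part, so $\dim_{\delta'} \nu_{2} \lessapprox 0$, contradicting the positive lower bound above. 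Choosing $\alpha$ small in terms of $d - D$ and $N$, then $\tau$ small in terms of $\alpha$ and $N$, absorbs all $\lessapprox$-errors, and the slack $\epsilon_{1} = 2^{-N}$ provides enough room for the direction separation.

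The main obstacle will be turning the heuristic "approximate product plus Shmerkin" into a clean quantitative argument: specifically, (i) showing that the $\bar{P}_{h}$'s approximately coincide for most $h$, which is where the choice $q = 2p$ and the uniformity of branching across consecutive elements of $Q$ given by \nref{K1} are essential, and (ii) calibrating Shmerkin's parameters so that its error terms remain smaller than both $(d - D)/2$ and the porosity constant $\eta$. The remaining ingredients are standard pigeonholing and moment computations on top of the structure already encoded in \nref{K1}-\nref{K2}.
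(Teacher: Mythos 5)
Your proposal follows the same overall route as the paper: assume $|e - e_{1}| > \Delta^{q-\epsilon_{1}}$, locate a heavy vertical $\Delta^{p}$-tube $T_{0}$, replace $\nu$ there by a quasi-product $\mu_{h} \times \mu_{v}$ whose $\pi_{e'}$-projection is controlled, exploit the $D$-porosity of $\pi_{e_{1}}(K)$ coming from \eqref{form43} on one factor and a non-concentration estimate on the other, and feed the resulting $L^{2}$-energy comparison into Shmerkin's inverse theorem to get a contradiction. This is exactly the structure of Sections \ref{s:productStructure}--\ref{conclusion}, including the correct scale window and the role of $\epsilon_{1}$.

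Two points in your sketch are under-specified relative to what the paper actually uses, and both concern the "quasi-product coherence" step, which you correctly flag as the main obstacle. First, you attribute the certification of $N(\pi_{e_{1}}(\bar{P}),\Delta^{q}) \approx |\bar{P}_{h}|$ for most $h$ to the branching uniformity in \nref{K1} and the choice $q = 2p$. That is not enough: the paper establishes the analogous statement ($\eta \geq d - 4\alpha/q_{\mathrm{spec}}$ in \eqref{form58}, which drives the absolute-continuity bound \eqref{form63}) by choosing reference points $x_{R} \notin \textbf{Bad}(p,q,e_{1})$ and deriving a contradiction from the smallness of $\textbf{Bad}(p,q,e_{1})$ in \eqref{badMeasure}. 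This is precisely the extra "tangents at moderate scales" information called (a') in Section \ref{mainOutline}, and it sits outside the data captured in \nref{K1}--\nref{K2} alone; any attempt to run your intersection argument using only \nref{K1} will leave a gap here. Second, your claim that $\nu_{2}$ has (entropy) dimension $\gtrsim d - s$ at scale $\delta'$ overstates what the paper proves and what is needed: the actual non-concentration \eqref{form78}/\eqref{form113} only gives a Frostman exponent of order $q_{\mathrm{spec}}(d - s)$ for $\theta\mu_{v}$ on $\Delta^{p}$-intervals, i.e., the estimate lives at the coarse scale $\Delta^{q_{\mathrm{spec}}}$, and this much weaker bound is exactly what makes the set $\calS$ in Theorem \ref{shmerkin} nonempty. (A small bookkeeping slip: after rescaling $T_{0}$, the relevant resolution is $\Delta^{p}$, not $\Delta^{1-p}$.) With those two points repaired your argument becomes the paper's.
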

Note that \eqref{form100} and \eqref{form47} imply \eqref{form99}, because by \eqref{form47} the set $S_{1} \subset J_{1}$ can be covered by $\lesssim \Delta^{-\epsilon_{1}}$ arcs of length $\Delta^{q}$, contained in $J_{1}$, and one of these arcs, say $J_{2}$, must satisfy 
\begin{displaymath} \sigma(J_{2} \cap S) \geq \sigma(J_{2} \cap S_{1}) \gtrsim \Delta^{\epsilon_{1}}\sigma(S_{1}) \stackrel{\eqref{form100}}{\gtrapprox} \Delta^{\epsilon_{1}}\sigma(J_{1} \cap S). \end{displaymath}
This is \eqref{form99}.

\subsection{Finding a product-like structure inside $K_{e} \cap K_{e_{1}}$}\label{s:productStructure} In proving Proposition \ref{prop2}, we may assume without loss of generality that $e_{1} = (1,0)$, and then we fix $e \in S_{1}$. Recall again the various objects in \nref{K1}-\nref{K2} of Section \ref{refining}. Now we wish to emphasise their dependence on the choice of $e \in S_{1}$, so we write generally write $\calT^{k}(e)$ and $t_{k}(e)$, except for $e = e_{1}$ we continue to write
\begin{displaymath} \calT^{k} := \calT^{k}(e_{1}) \quad \text{and} \quad t_{k} := t_{k}(e_{1}), \quad 0 \leq k \leq N - 1. \end{displaymath} 
Most of the arguments below will take place on the scales $\Delta^{p}$ and $\Delta^{q} = (\Delta^{p})^{2}$, so it will be convenient to have abbreviated notation for tubes of these particular widths. Recall that $\calT^{k}(e)$ is a collection of $\Delta^{q_{N - k}}$-tubes. Let $k_{p} \in \{1,\ldots,N - 2\}$ be the index such that
\begin{equation}\label{form101} q_{N - k_{p}} = p \quad \text{and} \quad q_{N - k_{p} + 1} = q. \end{equation}
We will write
\begin{itemize}
\item $\calT^{th}(e) := \calT^{k_{p}}(e)$ and $t_{th} := t_{k_{p}}(e)$, 
\item $\calT^{nar}(e) := \calT^{k_{p} - 1}(e)$ and $t_{nar} := t_{k_{p} - 1}(e)$,
\end{itemize} 
where "$th$" is short for "thick" and "$nar$" is short for "narrow". As stated, we further omit writing the "$e$" if $e = e_{1}$. 

Recall that all tubes in this paper are subsets of $B_{0} \subset [-1,1)^{2}$, so we can cover $\cup \calT^{th}$ by dyadic subsquares of $[-1,1)^{2}$ of side-length $\Delta^{p}$, which we denote by $\calD_{p}$ in the sequel. Write also
\begin{equation}\label{form50} D_{p} := \Delta^{p} \cdot \Z \cap [-1,1) = \{-1,-1 + \Delta^{p},\ldots,1 - \Delta^{p}\} \end{equation}
for the set of left endpoints of dyadic subintervals of $[-1,1)$ of side-length $\Delta^{p}$. We distinguish some particularly "heavy" squares in $\calD_{p}$. First, write 
\begin{equation}\label{form111} \calG_{p}:= \{R \in \calD_{p} : \nu(R \cap K_{e} \cap K_{e_{1}}) \geq \Delta^{dp + C\tau}\}, \end{equation}
where the constant $C \geq 1$ is determined by the implicit constant in \eqref{form46}, and the implicit constant in the inequality
\begin{displaymath} |\{R \in \calD_{p} : R \cap \spt \nu \neq \emptyset\}| \leq C_{\tau} (\Delta^{p})^{d - \tau}, \end{displaymath}
which follows from \eqref{form39} and Lemma \ref{lemma2}. Then, if $C$ was chosen sufficiently large, 
\begin{displaymath} \sum_{R \in \calD_{p} \setminus \calG_{p}} \nu(R \cap K_{e} \cap K_{e_{1}}) < |\{R \in \calD_{p} : R \cap \spt \nu \neq \emptyset\}| \cdot \Delta^{dp + C\tau} < \frac{\nu(K_{e} \cap K_{e_{1}})}{2}, \end{displaymath}
so at least half of the $\nu$-measure in $K_{e} \cap K_{e_{1}}$ is covered by $G_{p} := \cup \calG_{p}$:
\begin{equation}\label{form54} \nu(G_{p} \cap K_{e} \cap K_{e_{1}}) \gtrapprox 1. \end{equation}
Before proceeding, we perform another refinement of the heavy squares $\calG_{p}$. Namely, we call a square $R \in \calG_{p}$ \emph{bad} if
\begin{displaymath} \nu([R \cap K_{e} \cap K_{e_{1}}] \cap \textbf{Bad}(p,q,e_{1})) \geq \frac{\nu(R \cap K_{e} \cap K_{e_{1}})}{2}. \end{displaymath}
Then
\begin{displaymath} \sum_{R \in \calG_{p} \text{ is bad}} \nu(R \cap K_{e} \cap K_{e_{1}}) \leq 2\nu(\textbf{Bad}(p,q,e_{1})) < 2\Delta^{f(n + 1)\tau} \end{displaymath}
by \eqref{badMeasure} and the disjointness of the squares in $\calG_{p}$. If the function $f$ is sufficiently rapidly increasing, depending on the implicit constant $\sim_{\alpha,Q} 1$ in the exponent of \eqref{form54}, we infer that at most half of the $\nu$-measure of $G_{p} \cap K_{e} \cap K_{e_{1}}$ is covered by the bad squares $R \in \calG_{p}$. Thus, replacing $\calG_{p}$ by the non-bad squares (without changing notation), \eqref{form54} remains true for $G_{p} = \cup \calG_{p}$. Hence, we may assume that
\begin{equation}\label{form55} \nu([R \cap K_{e} \cap K_{e_{1}}] \, \setminus \, \textbf{Bad}(p,q,e_{1})) \gtrapprox \Delta^{dp}, \qquad R \in \calG_{p}. \end{equation}
We claim: it follows from \eqref{form55} that there exists a point $x_{R} \in R \, \setminus \, \textbf{Bad}(p,q,e_{1})$ with the property that
\begin{equation}\label{form56} \nu(B(x_{R},\Delta^{p}) \cap [R \cap K_{e} \cap K_{e_{1}}]) \gtrapprox \Delta^{dp}. \end{equation}
To see this, simply form a $(\Delta^{p}/2)$-net inside the set $(R \cap K_{e} \cap K_{e_{1}}) \, \setminus \, \textbf{Bad}(p,q,e_{1})$. The net evidently just contains $\lesssim 1$ points, since $\ell(R) = \Delta^{p}$. Hence, by \eqref{form55}, one of the net points -- called $x_{R}$ -- must even satisfy
\begin{displaymath} \nu(B(x_{R},\Delta^{p}) \cap [(R \cap K_{e} \cap K_{e_{1}}) \, \setminus \, \textbf{Bad}(p,q,e_{1})]) \gtrapprox \Delta^{dp}, \end{displaymath}
which is a little better than \eqref{form56}.

Recalling that $|\calT^{th}| \lessapprox (\Delta^{p})^{-t_{th}}$ by property \nref{K1}, we now use \eqref{form54} to single out one particularly "heavy" tube $T \in \calT^{th}$. Namely, writing
\begin{displaymath} \calT^{th}_{heavy} := \{T \in \calT^{th} : \nu(T \cap G_{p} \cap K_{e} \cap K_{e_{1}}) \gtrapprox (\Delta^{p})^{t_{th}}\}, \end{displaymath}
and choosing the implicit constant are appropriately (depending on the constants in \nref{K1} and \eqref{form54}), at most half of the $\nu$-mass of $G_{p} \cap K_{e} \cap K_{e_{1}}$ can be covered by the tubes $T \notin \calT^{th}_{heavy}$. Thus, we may find and fix a tube
\begin{equation}\label{form52} T_{0} \in \calT^{th}_{heavy} \quad \text{with} \quad \nu(T_{0} \cap G_{p} \cap K_{e} \cap K_{e_{1}}) \gtrapprox (\Delta^{p})^{t_{th}}. \end{equation}
After this point, the other tubes in $\calT^{th}$ can be completely forgotten. Recalling that $T_{0}$ is a dyadic tube, we note that $T_{0} \cap G_{p}$ is a union of a certain subfamily of $\calG_{p}$, which we denote by $\calG_{T_{0}}$. Since $\nu(R) \lesssim \Delta^{dp}$ for all $R \in \calG_{T_{0}}$ by \eqref{form39} and Lemma \ref{lemma2}, we can infer from \eqref{form52} a lower bound for the cardinality of $\calG_{T_{0}}$:
\begin{displaymath} (\Delta^{p})^{t_{th}} \lessapprox \nu(T_{0} \cap G_{p} \cap K_{e} \cap K_{e_{1}}) \leq \sum_{R \in \calG_{T_{0}}} \nu(R) \lesssim |\calG_{T_{0}}| \cdot \Delta^{dp}, \end{displaymath}
or in other words
\begin{displaymath} |\calG_{T_{0}}| \gtrapprox (\Delta^{p})^{t_{th} - d}. \end{displaymath}
At this point, we extract from $\calG_{T_{0}}$ an arbitrary sub-collection of cardinality $\approx (\Delta^{p})^{t_{th} - d}$, and we keep denoting this collection by $\calG_{T_{0}}$. Thus,
\begin{itemize}
\item[(G1)] $|\calG_{T_{0}}| \approx (\Delta^{p})^{t_{th} - d}$, where $|t_{th} - s(e_{1})| \leq \alpha/q_{\text{spec}}$ by \nref{K1},
\item[(G2)] $R \subset T_{0}$ for all $R \in \calG_{T_{0}}$,
\item[(G3)] $\nu(R \cap K_{e} \cap K_{e_{1}}) \approx \Delta^{dp}$ for all $R \in \calG_{T_{0}}$ since $\calG_{T_{0}} \subset \calG_{p}$.
\end{itemize}
Note here that 
\begin{displaymath} t_{th} \leq s(e_{1}) + \frac{2\alpha}{q_{\text{spec}}} \leq s + \frac{\alpha}{100} + \frac{2\alpha}{q_{1}} \leq D + \frac{3\alpha}{q_{\text{spec}}}. \end{displaymath}
In particular, choosing $\alpha$ small enough, depending on $q_{\text{spec}}$ and $d - D$, we may arrange that $t_{th} - d < 0$, and in particular that $|\calG_{T_{0}}| \geq 1$.

We write $\pi_{1},\pi_{2} \colon \R^{2} \to \R$ for the coordinate projections,
\begin{displaymath} \pi_{1}(x,y) = x \quad \text{and} \quad \pi_{2}(x,y) = y. \end{displaymath}
Recall now the set $D_{p} \subset [-1,1)$ of dyadic rationals from \eqref{form50}, and let $D_{v} \subset D_{p}$ (here "$v$" stands for "vertical") be the left endpoints of the dyadic intervals $\{\pi_{2}(R) : R \in \calG_{T_{0}}\}$, see Figure \ref{fig2}. In fact, it is convenient to introduce the notation $l(I)$ for the left endpoint of an arbitrary (bounded) interval $I \subset \R$, so then we can explicitly write
\begin{equation}\label{form102} D_{v} := \{l(\pi_{2}(R)) : R \in \calG_{T_{0}}\}. \end{equation}
Then
\begin{equation}\label{form51} |D_{v}| = |\calG_{T_{0}}| \approx (\Delta^{p})^{t_{th} - d} \quad \text{with} \quad |t_{th} - s(e_{1})| \leq \frac{\alpha}{q_{\text{spec}}} \end{equation}
by (G1). Next, since $\pi_{1}(T_{0})$ is a dyadic interval of length $\Delta^{p}$, we may apply a rescaling of the form
\begin{equation}\label{affine} A(x,y) = (\Delta^{-p}x,y) + (a_{0},0) \end{equation}
to the effect that $\pi_{1}(A(T_{0})) = [0,1)$, see Figure \ref{fig2}. For notational convenience later on, we assume without loss of generality that $a_{0} = 0$; this corresponds to assuming that $\pi_{1}(T_{0}) = [0,\Delta^{p})$, and yields the simple expression
\begin{equation}\label{form109} \pi_{1}(A(T)) = \Delta^{-p}\pi_{1}(T), \qquad T \in \calT^{nar}(T_{0}). \end{equation}
We now consider the tubes in $\calT^{nar}(T_{0}) = \{T \in \calT^{nar} : T \subset T_{0}\}$. They are dyadic tubes of width $\Delta^{q} = \Delta^{2p}$ with $\pi_{1}$-projection contained in $\pi_{1}(T_{0}) = [0,\Delta^{p})$, so $\{\pi_{1}(A(T)) : T \in \calT^{nar}(T_{0})\}$ is a collection of dyadic subintervals of $[0,1)$ of length $\Delta^{p}$. We write
\begin{equation}\label{form103} D_{h} := \{l(\pi_{1}(A(T))) : T \in \calT^{nar}(T_{0})\} \subset D_{p}. \end{equation}
Here "$h$" is stands for "horizontal". Recall from \nref{K1} and the notational conventions made below \eqref{form101} that
\begin{equation}\label{form61} |D_{h}| = |\calT^{q}(T_{0})| \approx \Delta^{-qt_{nar} + pt_{th}} = \Delta^{-p(2t_{nar} - t_{th})}. \end{equation}
Since further $|t_{th} - s(e_{1})| \leq 2\alpha/q_{\text{spec}}$ and $|t_{nar} - s(e_{1})| \leq 2\alpha/q_{\text{spec}}$ by \nref{K1}, we have the estimate
\begin{equation}\label{form49} |(2t_{nar} - t_{th}) - s(e_{1})| = |2(t_{nar} - s(e_{1})) + (s(e_{1}) - t_{th})| \leq \frac{6\alpha}{q_{\text{spec}}}. \end{equation}
Informally, the combined message from \eqref{form51} and \eqref{form49} is that $D_{v}$ contains roughly $\Delta^{p(s - d)}$ points and $D_{h}$ contains roughly $\Delta^{-ps}$ points, so the product set $D_{h} \times D_{v} \subset [-1,1)^{2}$ contains roughly $\Delta^{-pd}$ points, which are $\Delta^{p}$-separated.
\begin{figure}[h!]
\begin{center}
\includegraphics[scale = 0.4]{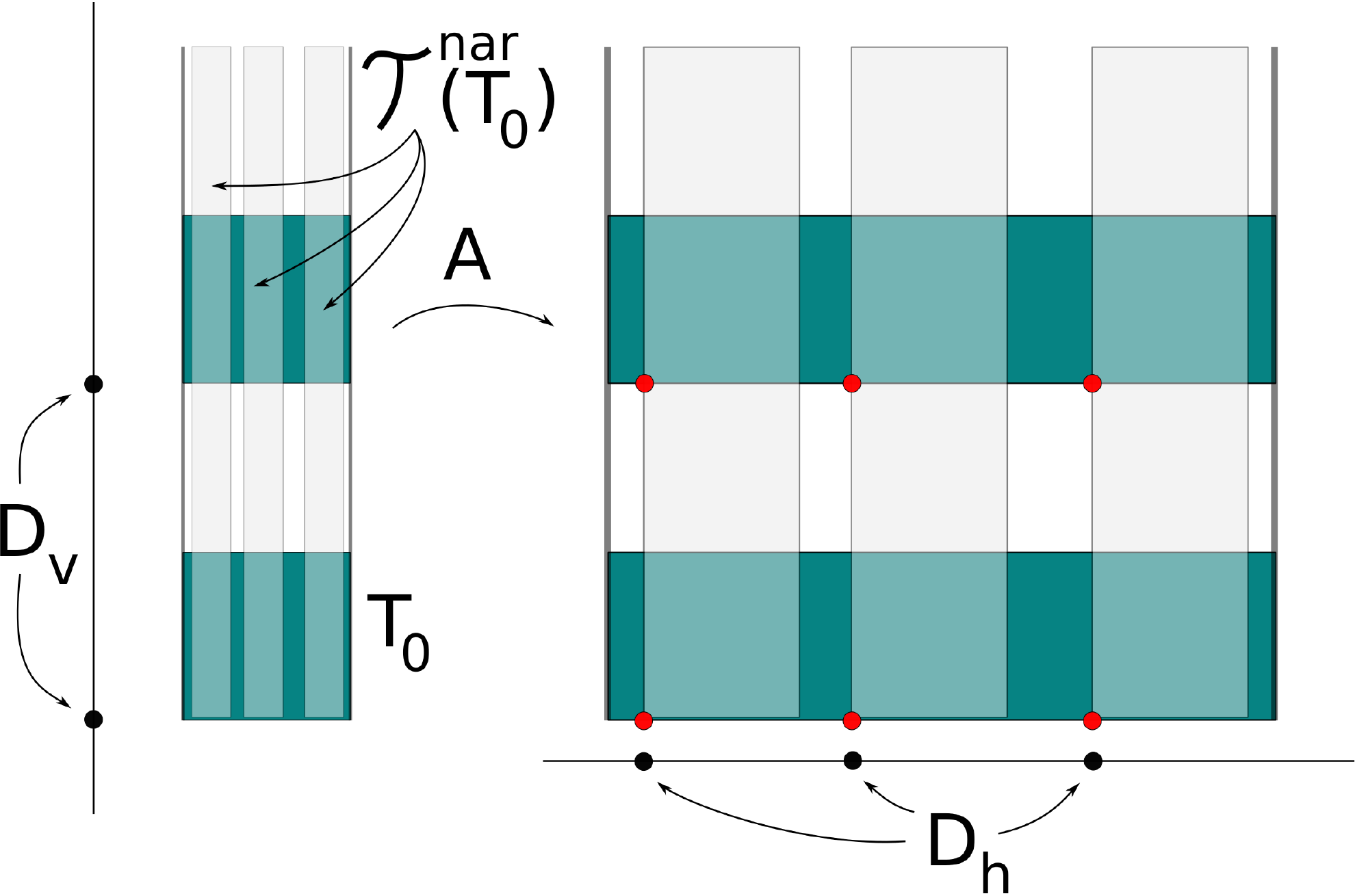}
\caption{The tubes in $\calT^{nar}(T_{0})$ and their images under $A$.}\label{fig2}
\end{center}
\end{figure}

\subsection{Absolute continuity with respect to a product measure}\label{productMeasures} We now consider the following discrete measures:
\begin{equation}\label{muhmuv} \mu_{h} := \frac{1}{|D_{h}|} \sum_{x \in D_{h}} \delta_{x} \quad \text{and} \quad \mu_{v} := \frac{1}{|D_{v}|} \sum_{x \in D_{v}} \delta_{x}. \end{equation} 
We also write $w_{x} := |D_{h}|^{-1}$ and $w^{y} := |D_{v}|^{-1}$,  so that the product measure $\mu_{h} \times \mu_{v}$ on $D_{h} \times D_{v}$ can be written in the form
\begin{displaymath} \mu_{h} \times \mu_{v} = \sum_{(x,y) \in D_{h} \times D_{v}} w_{x}w^{y} \cdot \delta_{(x,y)}. \end{displaymath}
We record that, by \eqref{form51} and \eqref{form61}-\eqref{form49}, we have
\begin{equation}\label{form62} w_{x}w^{y} \approx \Delta^{dp + O_{N}(1)\alpha}. \end{equation} 
Here, and in the sequel, the notation $O_{N}(1)$ refers to a constant with absolute value $\sim_{N} 1$. For example, in the case \eqref{form62} one could explicitly estimate that 
\begin{displaymath} w_{x}w^{y} \stackrel{\eqref{form51} \& \eqref{form61}}{\approx} \Delta^{p(d - t_{th}) + p(2t_{nar} - t_{th})}, \end{displaymath}
where 
\begin{displaymath} \Delta^{dp + 7\alpha p/q_{\text{spec}}} = \Delta^{p[d - s(e_{1})] + ps(e_{1}) + 7\alpha p/q_{\text{spec}}} \leq \Delta^{p(d - t_{th}) + p(2t_{nar} - t_{th})} \leq \Delta^{dp - 7\alpha p/q_{\text{spec}}}, \end{displaymath} 
and $q_{\text{spec}} = 2^{-2N}$. Trying to track the constants in this fashion would soon become exceedingly cumbersome. 

It may appear that the measure $\mu_{h} \times \mu_{v}$ has nothing to do with the "original" measure $\nu$ -- or even its push-forward $A(\nu)$ -- but in fact it does, and this is the next point of investigation. Roughly speaking, we wish to argue that the subset
\begin{displaymath} A(T_{0} \cap G_{p} \cap K_{e} \cap K_{e_{1}}) \end{displaymath} 
has large $(\mu_{h} \times \mu_{v})$-measure, at least after it has been appropriately discretised to $D_{h} \times D_{v}$. Recall that $T_{0} \cap G_{p} \cap K_{e} \cap K_{e_{1}}$ readily has large $\nu$-measure by \eqref{form52}, so we roughly face the problem of showing that $A(\nu) \ll \mu_{h} \times \mu_{v}$ quantitatively. 

We tackle the problem by defining another discrete measure on $D_{h} \times D_{v}$ which \emph{a priori} more faithfully represents $A(\nu)$ than $\mu_{h} \times \mu_{v}$. Consider a point $(x,y) \in D_{h} \times D_{v}$. Then, recalling \eqref{form102} and \eqref{form103}, we have 
\begin{displaymath} x = l(\pi_{1}(A(T))) \quad \text{and} \quad y = l(\pi_{2}(R)) \end{displaymath}
for some $T \in \calT^{nar}(T_{0})$ and some $R \in \calG_{T_{0}}$. We define
\begin{equation}\label{form60} w_{(x,y)} := \frac{\nu([R  \cap K_{e} \cap K_{e_{1}} \cap B(x_{R},\Delta^{p})] \cap T)}{(\Delta^{p})^{t_{th}}} \end{equation}
for these $R = R_{y}$ and $T = T_{x}$, where $x_{R} \in R \, \setminus \, \textbf{Bad}(q_{1},q_{2},e_{1})$ is the point selected at \eqref{form56}. Then, we set
\begin{displaymath} \nu' := \sum_{(x,y) \in D_{h} \times D_{v}} w_{(x,y)} \cdot \delta_{(x,y)}. \end{displaymath}
How close is $\nu'$ to the product measure $\mu_{h} \times \mu_{v}$? The latter gives weight $(|D_{h}||D_{v}|)^{-1} \approx \Delta^{dp + O_{N}(1)\alpha}$ to each pair $(x,y) \in D_{h} \times D_{v}$, so we would like to argue the weights $w_{(x,y)}$ "typically" have the same order of magnitude. This can be accomplished by one more "finding a bad ball" type argument, which we have already seen a few times.

Fix $y \in D_{v}$, and let $R \in \calG_{T_{0}}$ be the square such that $y = l(\pi_{2}(R))$. Then,
\begin{align} \sum_{x \in D_{h}} w_{(x,y)}  & = \sum_{T \in \calT^{nar}(T_{0})} \frac{\nu([R  \cap K_{e} \cap K_{e_{1}} \cap B(x_{R},\Delta^{p})] \cap T)}{(\Delta^{p})^{t_{th}}} \notag\\
&\label{form53} \stackrel{\textup{\nref{K1}}}{=} \frac{\nu(R \cap K_{e} \cap K_{e_{1}} \cap B(x_{R},\Delta^{p}))}{(\Delta^{p})^{t_{th}}} \gtrapprox (\Delta^{p})^{d - t_{th}}, \end{align} 
using first that the tubes $T \in \calT^{nar}(T_{0})$ cover $K_{e_{1}} \cap R \subset K_{e_{1}} \cap T_{0}$ by \nref{K1}, and then recalling \eqref{form56}. Now, using the pigeonhole principle, we find a "typical" value of the weights in $w_{(x,y)}$, $x \in D_{h}$. In other words, first inferring the trivial upper bound
\begin{displaymath} w_{(x,y)} \leq \frac{\nu(R)}{(\Delta^{p})^{t_{th}}} \leq (\Delta^{p})^{d - t_{th}}, \end{displaymath}
from the Frostman condition for $\nu$, we find $\eta \in [0,d - t_{th}]$ and a further subset $D^{y}_{h} \subset D_{h}$ with the properties that
\begin{equation}\label{form59} \sum_{x \in D_{h}^{y}} w_{(x,y)} \approx \sum_{x \in D_{h}} w_{(x,y)} \gtrapprox (\Delta^{p})^{d - t_{th}} \quad \text{and} \quad \Delta^{p\eta} \leq w_{(x,y)} \leq 2\Delta^{p\eta} \text{ for all } x  \in D_{h}^{y}. \end{equation}
Here the number $\Delta^{p\eta}$ should be interpreted as the "typical value" of the constants $w_{(x,y)}$, $x \in D_{h}$, written as a power of $\Delta$ for clarity. Next, using once more the Frostman estimate for $\nu$, we infer that
\begin{displaymath} |D_{h}^{y}| \cdot \Delta^{p\eta} \leq \sum_{x \in D_{h}} w_{(x,y)} \leq \sum_{T \in \calT^{nar}(T_{0})} \frac{\nu(R \cap T)}{(\Delta^{p})^{t_{th}}}  \leq (\Delta^{p})^{d - t_{th}}, \end{displaymath}
whence 
\begin{equation}\label{form57} |D_{h}^{y}| \lesssim (\Delta^{p})^{d - t_{th} - \eta}. \end{equation}
Now, we claim that 
\begin{equation}\label{form58} \eta \geq d - \frac{4\alpha}{q_{\text{spec}}}. \end{equation}  
Assume to the contrary: $\eta < d - 4\alpha/q_{\text{spec}}$. Recalling from \eqref{form51} (or \nref{K1}) that $|t_{th} - s(e_{1})| \leq 2\alpha/q_{\text{spec}}$, and that $|s(e_{1}) - s| < \alpha$, we see that
\begin{equation}\label{form104} \eta + t_{th} - d < s(e_{1}) - \frac{2\alpha}{q_{\text{spec}}} < s - 2\alpha.  \end{equation} 
Now, let $\calT_{y}$ be the collection of tubes in $\calT^{nar}(T_{0})$ corresdponding to the points in $D_{h}^{y}$. More precisely, recall that every $x \in D_{h}^{y} \subset D_{h}$ has the form $x = l(\pi_{1}(A(T)))$ for some $T \in \calT^{nar}(T_{0})$, and we denote the tubes of $\calT^{nar}(T_{0})$ so obtained by $\calT_{y}$. With this notation,
\begin{align} \sum_{T \in \calT_{y}} \nu(T \cap B(x_{R},\Delta^{p})) & \geq (\Delta^{p})^{t_{th}}\sum_{T \in \calT_{y}} \frac{\nu([R \cap K_{e} \cap K_{e_{1}} \cap B(x_{R},\Delta^{p})] \cap T)}{(\Delta^{p})^{t_{th}}} \notag\\
&\label{form120} = (\Delta^{p})^{t_{th}} \sum_{x \in D_{h}^{y}} w_{(x,y)} \stackrel{\eqref{form59}}{\approx} (\Delta^{p})^{t_{th}} \sum_{x \in D_{h}} w_{(x,y)} \gtrapprox \Delta^{dp},  \end{align}
recalling \eqref{form53} in the last estimate. In other words, the collection $\calT_{y}$ of $(\Delta^{q},e_{1})$-tubes of cardinality
\begin{displaymath} |\calT_{y}| = |D_{h}^{y}| \stackrel{\eqref{form57} \& \eqref{form104}}{\lesssim} (\Delta^{p})^{-s + 2\alpha} \stackrel{q = 2p}{=} (\Delta^{q - p})^{-s + 2\alpha} \end{displaymath}
covers a set of $\nu$-measure $\gtrapprox \Delta^{pd}$ inside the ball $B(x_{R},\Delta^{p}) =: B$. Consequently, a set of $\nu^{B}$-measure $\gtrapprox 1$ can be covered by a family $\calT_{n + 1}(e_{1})$ of $(\Delta^{q - p},e_{1})$-tubes of cardinality $|\calT_{n + 1}(e_{1})| \lesssim (\Delta^{q - p})^{-s + 2\alpha}$. Now we may repeat an argument we have already seen many times (for example right after \eqref{form38}): assuming that $f$ is rapidly increasing enough -- depending on the implicit constants in the lower bound on line \eqref{form120} -- and using Lemma \ref{pigeon}, we infer that $B$ is an $e_{1}$-bad ball relative to the scale $\Delta^{q}$. In particular $x_{R} \in \textbf{Bad}(p,q,e_{1})$, contrary to the choice of $x_{R}$ above \eqref{form56}. This contradiction establishes \eqref{form58}.

The upshot is that for any $y \in D_{v}$ fixed, there exists a subset $D_{h}^{y} \subset D_{h}$ such that \eqref{form59} holds, and
\begin{displaymath} w_{(x,y)} \sim \Delta^{p\eta} \leq (\Delta^{p})^{d - 4\alpha/q_{\text{spec}}} \lessapprox \Delta^{-C_{N}\alpha}w_{x}w^{y}, \end{displaymath} 
recalling \eqref{form62} in the final estimate; here $C_{N} \geq 1$ is a constant depending only on $N$. We write
\begin{displaymath} G := \bigcup_{y \in D_{v}} D_{h}^{y}, \end{displaymath}
and then consider the restriction of $\nu'$ to $G$:
\begin{displaymath} \nu_{G} := \sum_{(x,y) \in G} w_{(x,y)} \cdot \delta_{(x,y)}. \end{displaymath} 
Then $\nu_{G} \ll \mu_{h} \times \mu_{v}$ with density 
\begin{equation}\label{form63} \frac{d\nu_{G}}{d(\mu_{h} \times \mu_{v})} \lessapprox \Delta^{-C_{N}\alpha}, \end{equation}
and
\begin{equation}\label{form64} \nu_{G}(D_{h} \times D_{v}) = \sum_{y \in D_{v}} \sum_{x \in D_{h}^{y}} w_{(x,y)} \stackrel{\eqref{form59}}{\gtrapprox} |D_{v}| \cdot (\Delta^{p})^{d - t_{th}} \stackrel{\eqref{form51}}{\gtrapprox} 1. \end{equation}
From \eqref{form63}-\eqref{form64}, we finally infer that
\begin{equation}\label{form67} (\mu_{h} \times \mu_{v})(\spt \nu_{G}) \gtrapprox \Delta^{C_{N}\alpha}. \end{equation}

\subsection{Projecting the measures $\mu_{h} \times \mu_{v}$ and $\nu_{G}$} Recall that we are in the process of proving Proposition \ref{prop2}: for a fixed vector $e \in S_{1} \subset J_{1}$, we are trying to show that 
\begin{equation}\label{form68} |e - (1,0)| = |e - e_{1}| \leq \Delta^{q - \epsilon_{1}}, \end{equation}
where $\epsilon_{1} = 2^{-N}$ according to the choice made in \eqref{epsilonOne}. This will be true if $\alpha = \alpha(N) > 0$ and $\tau = \tau(\alpha,N) > 0$ are chosen sufficiently small. We now make a counter assumption:
\begin{equation}\label{form106} \Delta^{q - \epsilon_{1}} < |e - e_{1}| \lesssim \Delta^{p}, \end{equation}
where the upper bound follows from $e \in J_{1}$.

So far, the role of the vector $e$ has been passive, but now we concentrate on it. Recall from \nref{K1} that the set $K_{e}$ is contained in the union of the $(\Delta^{q},e)$-tubes in the collection $\calT^{nar}(e)$. We want to say something a little sharper concerning the intersection $K_{e} \cap T_{0}$: because $|e - e_{1}| \leq \diam(S_{1}) \lesssim \Delta^{p}$, and $T_{0} \in \calT^{th} = \calT^{th}(e_{1})$ is a tube of width $\Delta^{p}$, we first note that 
\begin{displaymath} |\{T \in \calT^{th}(e) : T \cap T_{0} \neq \emptyset\}| \lesssim 1. \end{displaymath}
The tube $T_{0}$ and one of the $\lesssim 1$ tubes in $T \in \calT^{th}(e)$ with $T \cap T_{0} \neq \emptyset$ are shown in Figure \ref{fig1}. So, $K_{e} \cap T_{0}$ is covered by the union of the tubes in $\calT^{nar}(e)$ contained in one of $\lesssim 1$ tubes in $\calT^{th}(e)$. We denote this collection by $\calT^{nar}(e,T_{0})$. Recalling \nref{K1}, and that $q = 2p$, we then infer that
\begin{equation}\label{form66} |\calT^{nar}(e,T_{0})| \lessapprox \Delta^{-t_{nar}(e)q + t_{th}(e)p} = \Delta^{-p(2t_{nar}(e) - t_{th}(e))} = (\Delta^{p})^{-s + O_{N}(1)\alpha}, \end{equation}
since (repeating \eqref{form49}), we have
\begin{displaymath} |(2t_{nar}(e) - t_{th}(e)) - s(e)| \leq \frac{6\alpha}{q_{\text{spec}}}. \end{displaymath}
\begin{figure}[h!]
\begin{center}
\includegraphics[scale = 0.4]{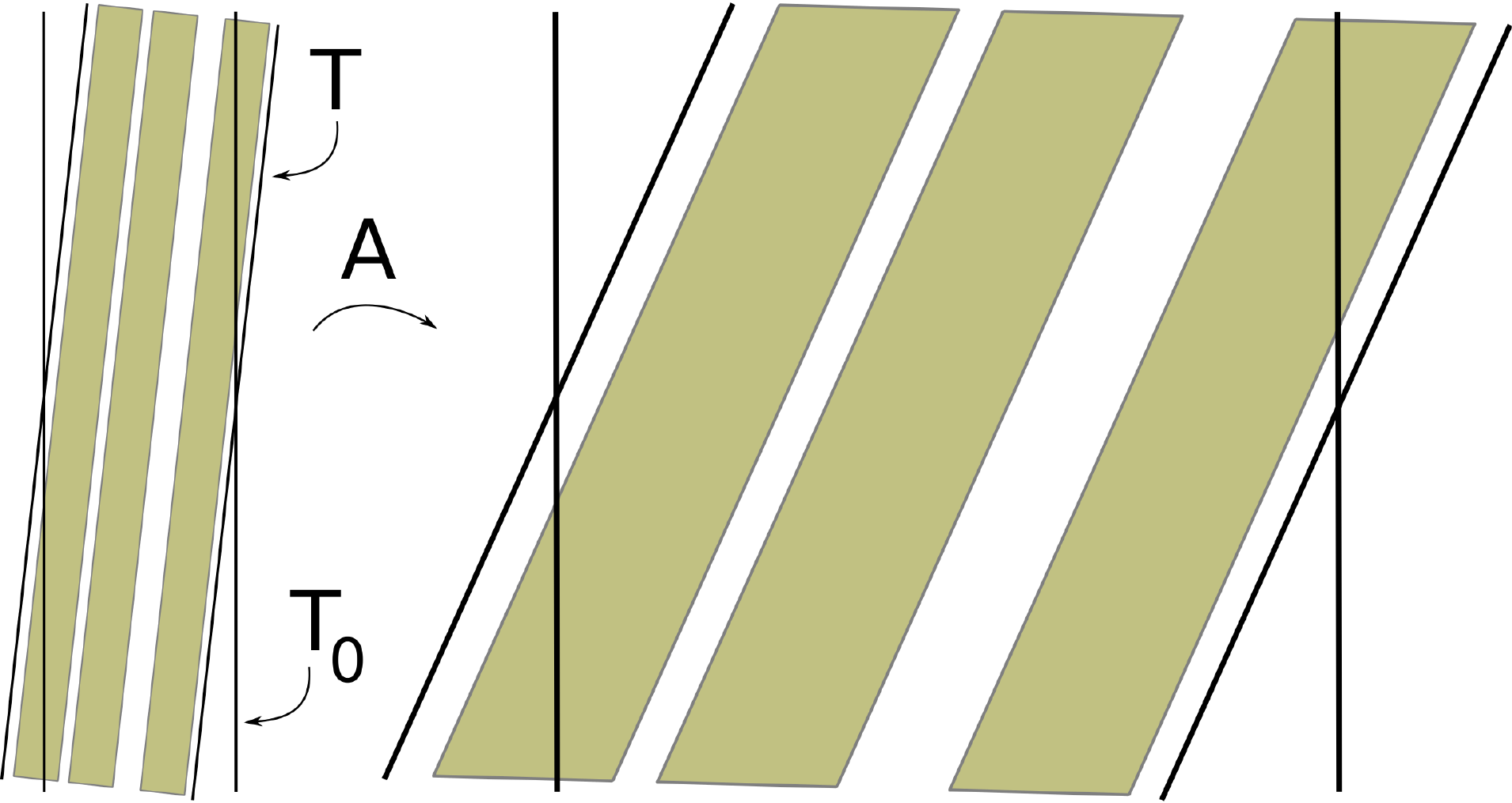}
\caption{Some tubes in $\calT^{nar}(e,T_{0})$ and their images under the map $A$.}\label{fig1}
\end{center}
\end{figure}

We next claim that
\begin{equation}\label{form65} \spt \nu_{G} = \{(x,y) \in D_{h} \times D_{v} : w_{(x,y)} > 0\} \subset \bigcup_{T \in \calT^{nar}(e,T_{0})} A(T)(C\Delta^{p}), \end{equation}
where $C \geq 1$ is an absolute constant, and $A(T)(C\Delta^{p})$ means the $C\Delta^{p}$-neighbourhood of $A(T)$. The sets $A(T)(C\Delta^{p})$ are not exactly tubes in the strict sense of this paper, but they are each contained in an \emph{ordinary $(C\Delta^{p},e')$-tube}, where
\begin{equation}\label{form69} |e_{1} - e'| \sim \Delta^{-p}|e_{1} - e|. \end{equation} 
By an ordinary $(w,e')$-tube, we mean a set of the form $\pi_{e'}^{-1}(I)$, where $I \subset \R$ and $\ell(I) = w$. For a proof of these claims on the geometry of $A(T)$, see Figure \ref{fig3}.
\begin{figure}[h!]
\begin{center}
\includegraphics[scale = 0.4]{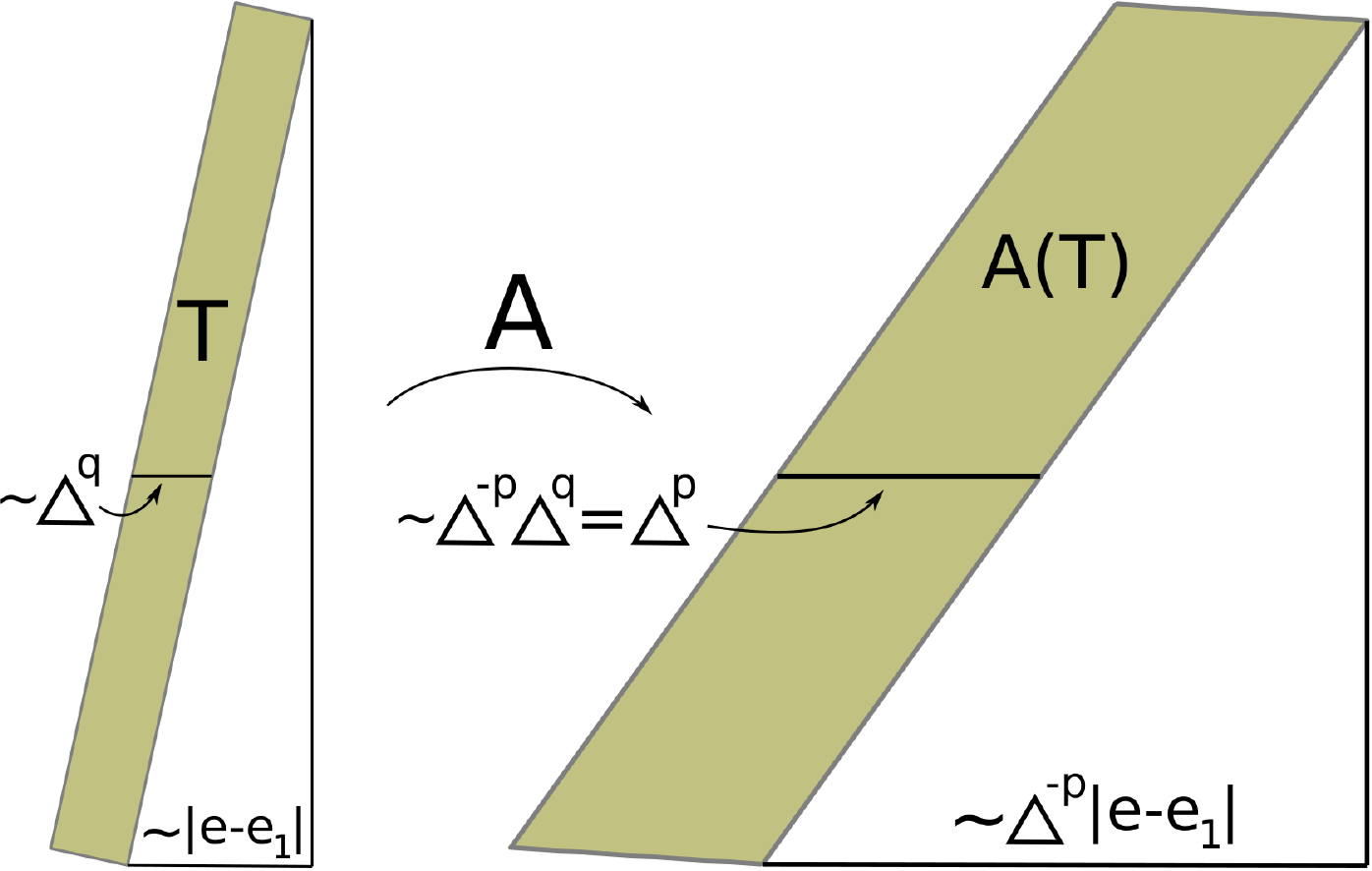}
\caption{The geometry of the images $A(T)$ for $T \in \calT^{nar}(e,T_{0})$.}\label{fig3}
\end{center}
\end{figure}


In combination with \eqref{form66}, \eqref{form65} will therefore have the following corollary:
\begin{proposition}\label{prop1} $N(\pi_{e'}(\spt \nu_{G}),\Delta^{p}) \lessapprox (\Delta^{p})^{-s - C_{N}\alpha}$, where $e' \in S^{1}$ satisfies \eqref{form69}. \end{proposition}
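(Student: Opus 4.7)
The proof should be a short bookkeeping exercise that simply chains together the inclusion \eqref{form65}, the geometric observation that each $A(T)(C\Delta^{p})$ sits inside an ordinary $(C\Delta^{p},e')$-tube, and the cardinality bound \eqref{form66}. Here is my proposed outline.

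The plan is to project the cover of $\spt \nu_{G}$ supplied by \eqref{form65} through $\pi_{e'}$ and count. More precisely, by \eqref{form65},
\begin{displaymath} \spt \nu_{G} \subset \bigcup_{T \in \calT^{nar}(e,T_{0})} A(T)(C\Delta^{p}), \end{displaymath}
and the paragraph containing \eqref{form69} asserts that for each $T \in \calT^{nar}(e,T_{0})$ the neighbourhood $A(T)(C\Delta^{p})$ is contained in some ordinary $(C\Delta^{p},e')$-tube $\pi_{e'}^{-1}(I_{T})$, where $I_{T} \subset \R$ is an interval of length $\leq C\Delta^{p}$. Applying $\pi_{e'}$ therefore yields
\begin{displaymath} \pi_{e'}(\spt \nu_{G}) \subset \bigcup_{T \in \calT^{nar}(e,T_{0})} I_{T}, \end{displaymath}
and each interval $I_{T}$ can be covered by $\lesssim 1$ open balls of radius $\Delta^{p}$ in $\R$.

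Consequently,
\begin{displaymath} N(\pi_{e'}(\spt \nu_{G}),\Delta^{p}) \lesssim |\calT^{nar}(e,T_{0})| \stackrel{\eqref{form66}}{\lessapprox} (\Delta^{p})^{-s + O_{N}(1)\alpha}. \end{displaymath}
Since $O_{N}(1)\alpha$ is bounded in absolute value by some $C_{N}\alpha$ with $C_{N} \sim_{N} 1$, and since $\Delta^{p} \in (0,1)$, this upper bound is in turn $\leq (\Delta^{p})^{-s - C_{N}\alpha}$ after possibly enlarging $C_{N}$. This gives exactly the inequality claimed in Proposition \ref{prop1}.

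There is essentially no obstacle: the nontrivial content has already been packaged into \eqref{form65}, \eqref{form66} and \eqref{form69}, so the proof is a one-line application of each. The only matter deserving a brief comment is to verify that the constant $C_{N}$ in the statement absorbs both the implicit $O_{N}(1)$ exponent from \eqref{form66} and the sign; this is immediate because everything is being bounded from above and $\Delta^{p} < 1$.
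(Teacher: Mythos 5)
Your proof is correct and is precisely the argument the paper intends: it presents Proposition \ref{prop1} as an immediate corollary of \eqref{form65}, \eqref{form69}, and \eqref{form66}, and your chaining of these three facts (cover $\spt\nu_{G}$ by the sets $A(T)(C\Delta^{p})$, absorb each into an ordinary $(C\Delta^{p},e')$-tube, project, count via \eqref{form66}, and enlarge $C_{N}$ to absorb the sign of the $O_{N}(1)$ exponent) matches the paper's reasoning exactly.
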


We then prove \eqref{form65}. Pick $(x,y) \in D_{h} \times D_{v}$ with $w_{(x,y)} > 0$. Let $R \in \calG_{T_{0}}$ and $T_{e_{1}} \in \calT^{nar}(T_{0})$ be such that
\begin{displaymath} y = l(\pi_{2}(R)) \quad \text{and} \quad x = l(\pi_{2}(A(T_{e_{1}}))). \end{displaymath}
Then $(x_{0},y_{0}) := A^{-1}(x,y) = (\Delta^{p}x,y) \in R \cap T_{e_{1}}$. Moreover, recalling the definition \eqref{form60},
\begin{displaymath} w_{(x,y)} > 0 \quad \Longrightarrow \quad \nu(R \cap K_{e} \cap T_{e_{1}}) > 0, \end{displaymath} 
so in particular there exists a tube $T_{e} \in \calT^{nar}(e,T_{0})$ such that $\nu(R \cap T_{e_{1}} \cap T_{e}) > 0$. Now, pick a point $(x',y') \in R \cap T_{e_{1}} \cap T_{e}$, and note that  
\begin{displaymath} |x_{0} - x'| \leq \Delta^{q} \quad \text{and} \quad |y_{0} - y'| \leq \Delta^{p}. \end{displaymath}
It follows that 
\begin{displaymath} |(x,y) - A(x',y')| = |A(x_{0},y_{0}) - A(x',y')| = |(\Delta^{-p}(x_{0} - x'),y_{0} - y')| \lesssim \Delta^{p}. \end{displaymath}
Because $A(x',y') \in A(T_{e})$, we infer that also $(x,y) \in A(T_{e})(C\Delta^{p})$ for some absolute constant $C \geq 1$. This proves \eqref{form65}. 

We next aim to use Proposition \ref{prop1} to derive a contradiction from the lower bound in \eqref{form106}. First, from \eqref{form69} and \eqref{form106}, we infer that
\begin{displaymath} |e' - e_{1}| \sim \Delta^{-p}|e - e_{1}| \geq \Delta^{q - p - \epsilon_{1}} = \Delta^{p - \epsilon_{1}} \quad \text{and} \quad |e' - e_{1}| \lesssim 1. \end{displaymath}
Now, fix $\theta \in \R$ such that $(1,\theta) \| e'$. Then, $ \Delta^{p - \epsilon_{1}} \lesssim |\theta| \lesssim 1$, and for slight notational convenience, we work under the assumption that
\begin{equation}\label{form110} \theta \gtrsim \Delta^{p - \epsilon_{1}}. \end{equation}
We now wish to compute some $L^{2}$-norms of the measures $\mu_{h}$ and $\mu_{h} \ast \theta \mu_{v}$, where $\theta \mu_{v}$ refers to the push-forward of $\mu_{v}$ under the map $x \mapsto \theta x$. These are discrete measures, so their $L^{2}$-norm, literally speaking, is infinity. However, we can obtain useful information by mollifying the measures first at scale $\Delta^{p}$. To this end, let $\psi := \tfrac{1}{2}\chi_{[-1,1]}$, and for $\rho > 0$, define $\psi_{\rho}(x) := \rho^{-1} \psi(x/\rho) = \tfrac{1}{2\rho} \chi_{[-\rho,\rho]}$. Then, recalling that $\mu_{h}$ was a (normalised) sum of Dirac measures supported on the $\Delta^{p}$-separated set $D_{h}$, see \eqref{muhmuv}, it is easy to see that
\begin{equation}\label{form108} \|\mu_{h} \ast \psi_{\Delta^{p}}\|_{L^{2}} \sim \left(\frac{1}{\Delta^{p}|D_{h}|}\right)^{1/2} \approx (\Delta^{p})^{(s - 1)/2 + O_{N}(1)\alpha}, \end{equation}
recalling from \eqref{form61}-\eqref{form49} that $|D_{h}| \approx (\Delta^{p})^{-s + O_{N}(1)\alpha}$ in the last estimate. Next, we investigate the $L^{2}$-norm of the convolution $\mu_{h} \ast \theta \mu_{v}$. From the choice of $\theta$, namely $(1,\theta) \| e'$, one can easily verify that
\begin{displaymath} \mu_{h} \ast \theta \mu_{v} = |(1,\theta)|_{\sharp}[\pi_{e'}(\mu_{h} \times \mu_{v})], \end{displaymath}  
so (using also $|(1,\theta)| \sim 1$), we infer that
\begin{equation}\label{form107} \|(\mu_{h} \ast \theta \mu_{v}) \ast \psi_{\Delta^{p}}\|_{L^{2}} \sim \|\pi_{e'}(\mu_{h} \times \mu_{v}) \ast \psi_{\Delta^{p}}\|_{L^{2}}.  \end{equation}
To estimate the quantity on the right hand side, we start by noting that the support of the measure
\begin{displaymath} \pi_{e'}[(\mu_{h} \times \mu_{v})|_{\spt \nu_{G}}] \ast \psi_{\Delta^{p}} \end{displaymath}
is contained in the $2\Delta^{p}$-neighbourhood of the set $\pi_{e'}(\spt \nu_{G})$, and hence, by Proposition \ref{prop1}, has Lebesgue measure no larger than $\lessapprox (\Delta^{p})^{1 - s - C_{N}\alpha}$. Consequently, using \eqref{form67} (plus the fact that neither push-forward nor convolution with $\psi_{\Delta^{p}}$ affects total variation), and then the Cauchy-Schwarz inequality, we obtain
\begin{displaymath} \Delta^{C_{N}\alpha} \stackrel{\eqref{form67}}{\lessapprox} \|\pi_{e'}[(\mu_{h} \times \mu_{v})|_{\spt \nu_{G}}] \ast \psi_{\Delta^{p}}\|_{L^{1}} \lessapprox \left( (\Delta^{p})^{1 - s - C_{N}\alpha} \right)^{1/2}\|\pi_{e'}(\mu_{h} \times \mu_{v}) \ast \psi_{\Delta^{p}}\|_{L^{2}}.  \end{displaymath} 
Combining this estimate with \eqref{form108}-\eqref{form107}, we have now established that
\begin{equation}\label{form70} \|(\mu_{h} \ast \theta\mu_{v}) \ast \psi_{\Delta^{p}}\|_{L^{2}} \gtrapprox \Delta^{C_{N}\alpha} \|\mu_{h} \ast \psi_{\Delta^{p}}\|_{L^{2}}. \end{equation} 
The estimate \eqref{form70} will soon place us in a position to apply Shmerkin's inverse theorem, \cite[Theorem 2.1]{Sh}, the relevant parts of which are also stated as Theorem \ref{shmerkin} below. Before doing so, we make some remarks. First, note from \eqref{form109}-\eqref{form103} that
\begin{displaymath} \Delta^{p} \cdot [\spt \mu_{h}] = \Delta^{p} \cdot D_{h} = \{l(\pi_{1}(T))) : T \in \calT^{nar}(T_{0})\}. \end{displaymath}
In particular, since $\nu(K_{e_{1}} \cap T) > 0$ for all $T \in \calT^{nar}(T_{0})$ by \nref{K1}, we have
\begin{equation}\label{form71} \Delta^{p} \cdot [\spt \mu_{h}] \subset \pi_{1}(T_{0}) \cap [\pi_{1}(K_{e_{1}})](\Delta^{q}). \end{equation}
Now, we apply the facts that $e_{1} \in S_{1} \subset \spt \sigma$ and $\nu = \mu^{B(x_{0},R_{0})}$, which imply that \eqref{form43} holds for $e_{1}$, and for $\spt \nu$ in place of $K$:
\begin{displaymath} N(\pi_{e_{1}}(\spt \nu \cap B_{0}) \cap B(x,R),r) \leq C_{E} \left(\frac{R}{r} \right)^{D} \end{displaymath}
for all $x \in \R$ and $0 < r < R < \infty$. In particular, the estimate above holds for all $B(x,R) \subset \pi_{1}(T_{0})$ and all $\Delta^{q} < r < R \leq \Delta^{p}$. It follows from this, $K_{e_{1}} \subset \spt \nu \cap B_{0}$, and \eqref{form71} that 
\begin{equation}\label{form72} N([\spt \mu_{h}] \cap B(x,R),r) \lesssim C_{E} \left(\frac{R}{r} \right)^{D}, \qquad x \in \R, \: \Delta^{p} \leq r < R \leq 1. \end{equation}
Here $0 < D < \min\{d,1\}$, so \eqref{form72} means that the support of $\mu_{h}$ is porous on all scales between $\Delta^{p}$ and $1$. This is good news in view of applying Shmerkin's inverse theorem, but we also need to know something about the measure $\theta \mu_{v}$, namely that it cannot be concentrated on a very small number of $\Delta^{p}$-intervals.

\subsection{Non-concentration of $\theta \mu_{v}$} The goal in this section is to show that
\begin{equation}\label{form78} (\theta \mu_{v})(I) \lessapprox \Delta^{q_{\text{spec}}(d - s) - C_{N}\alpha} \end{equation}
for any interval $I \subset \R$ of length $\ell(I) = \Delta^{p}$. Here we need to know that 
\begin{displaymath} q_{\text{spec}} = 2^{-2N} < 2^{-N} = \epsilon_{1}, \end{displaymath}
recall the choices \eqref{qSpec} and \eqref{epsilonOne}. Then, recalling from \eqref{form45b} that $s < (d + D)/2$, taking $\alpha, \Delta > 0$ sufficiently small in terms of $N$, we will find that
\begin{equation}\label{form113} (\theta \mu_{v})(I) \leq \Delta^{q_{\text{spec}}(d - D)/3} \qquad \text{ for all } I \subset \R \text{ with } \ell(I) = \Delta^{p}. \end{equation} 
Recall from \eqref{form110} that $\theta \gtrsim \Delta^{p - \epsilon_{1}}$, so \eqref{form78} will follow once we manage to prove that
\begin{equation}\label{form73} \mu_{v}(I) \lessapprox \Delta^{q_{\text{spec}}(d - s(e_{1})) - 4\alpha/q_{\text{spec}}} \end{equation}
for all intervals $I \subset \R$ of length $\Delta^{\epsilon_{1}}$. Furthermore, since $q_{\text{spec}} < \epsilon_{1}$, it suffices to verify \eqref{form73} for all dyadic intervals of length $\Delta^{q_{\text{spec}}}$. We fix one such interval $I$. Recall from \eqref{muhmuv} the definition of $\mu_{v}$:
\begin{displaymath} \mu_{v} = \frac{1}{|D_{v}|} \sum_{x \in D_{v}} \delta_{x} \stackrel{\eqref{form51}}{\approx} (\Delta^{p})^{d - t_{th}} \sum_{x \in D_{v}} \delta_{x}. \end{displaymath}
For each $x \in D_{v} \cap I$, let $R \in \calG_{T_{0}}$ be the heavy square such that $R \subset T_{0}$ and $x = l(\pi_{2}(R))$; then, since $I$ is a dyadic interval, we have $\pi_{2}(R) \subset I$, and hence $R \subset \pi_{2}^{-1}(I)$. It follows that 
\begin{displaymath} |D_{v} \cap I| \leq \card \{R \in \calG_{T_{0}} : R \subset \pi_{2}^{-1}(I)\}. \end{displaymath} 
Next, by the definition of heavy squares in \eqref{form111}, we recall that
\begin{displaymath} \nu(R \cap K_{e_{1}}) \gtrapprox \Delta^{dp}, \end{displaymath}
and consequently
\begin{equation}\label{form75} \mu_{v}(I) \approx (\Delta^{p})^{d - t_{th}} \cdot |D_{v} \cap I| \lessapprox (\Delta^{p})^{-t_{th}} \nu(T_{0} \cap K_{e_{1}} \cap \pi_{2}^{-1}(I)). \end{equation}
We recall from \nref{K1} that the set $T_{0} \cap K_{e_{1}}$ is covered by the $(\Delta,e_{1})$-tubes in $\calT^{0}(T_{0}) = \{T \in \calT^{0} : T \subset T_{0}\}$, and consequently
\begin{equation}\label{form76} \nu(T_{0} \cap K_{e_{1}} \cap \pi_{2}^{-1}(I)) = \sum_{T \in \calT^{0}(T_{0})} \nu(T \cap K_{e_{1}} \cap \pi_{2}^{-1}(I)). \end{equation}
Further, by iterating the branching estimate $|\calT^{k - 1}(T^{k})| \approx |\calT^{k - 1}|/|\calT^{k}|$ in \nref{K1} in the same manner as we did in \eqref{form34}, we have
\begin{equation}\label{form77} |\calT^{0}(T_{0})| \approx \frac{|\calT^{0}|}{|\calT^{th}|} \approx \Delta^{-s(e_{1}) + pt_{th}},  \end{equation} 
recalling also the choice of the number $t_{th}$ from under \eqref{form101}. We now fix a tube $T \in \calT^{0}(T_{0})$, and note that the intersection $T \cap \pi_{2}^{-1}(I)$ can be covered by $\lesssim 1$ balls of radius $\Delta^{q_{\text{spec}}}$. Now, we finally use the non-concentration estimate from \nref{K2}, which we repeat here for convenience:
\begin{displaymath} \frac{\nu([K_{e_{1}} \cap T] \cap B(x,\Delta^{q_{\text{spec}}}))}{\nu(T)} \lesssim (\Delta^{q_{\text{spec}}})^{d - s(e) - 4\alpha/q_{\text{spec}}}, \qquad x \in \R^{2}, \: T \in \calT^{0}. \end{displaymath}
Recalling (this is also stated in \nref{K2}) that $\nu(T) \approx \Delta^{s(e_{1})}$ for $T \in \calT^{0}$, we may combine the estimate above with \eqref{form75}-\eqref{form77} to obtain
\begin{align*} \mu_{v}(I) \lessapprox (\Delta^{p})^{-t_{th}} \cdot |\calT^{0}(T_{0})| \cdot \Delta^{s(e_{1}) + q_{\text{spec}}(d - s(e_{1})) - 4\alpha/q_{\text{spec}}} \lessapprox \Delta^{q_{\text{spec}}(d - s(e_{1})) - 4\alpha/q_{\text{spec}}}, \end{align*}
which is precisely \eqref{form73}.

\subsection{Applying Shmerkin's inverse theorem} Now, we have gathered all the pieces to apply Shmerkin's inverse thereorem \cite[Theorem 2.1]{Sh}, whose statement (in reduced form) we also include right here for the reader's convenience. We explain the notions appearing in the theorem afterwards.
\begin{thm}[Shmerkin]\label{shmerkin} Given $\beta > 0$ and $m_{0} \in \N$, there are $\kappa > 0$ and $m \geq m_{0}$ such that the following holds for all large enough $\ell$. Let $\Delta = 2^{-\ell m}$, and let $\mu,\nu$ be $\Delta$-measures such that
\begin{equation}\label{inverseHyp} \|\mu \ast \nu\|_{L^{2},\textup{Sh}} \geq \Delta^{\kappa}\|\mu\|_{L^{2},\textup{Sh}}. \end{equation}
Then, there exist sets $A \subset \spt \mu$ and $B \subset \spt \nu$ such that
\begin{itemize}
\item[\textup{(A)}] there is a sequence $(R_{s}^{\mu})_{s = 0}^{\ell} \subset \{1,\ldots,2^{m}\}^{\ell + 1}$, such that 
\begin{displaymath} N(A \cap I,2^{-(s + 1)m}) = R_{s}^{\mu} \end{displaymath}
for all dyadic intervals $I$ of length $2^{-ms}$ intersecting $A$,
\item[\textup{(B)}] there is a sequence $(R_{s}^{\nu})_{s = 0}^{\ell} \subset \{1,\ldots,2^{m}\}^{\ell + 1}$, such that 
\begin{displaymath} N(B \cap I,2^{-(s + 1)m}) = R_{s}^{\nu} \end{displaymath}
for all dyadic intervals $I$ of length $2^{-ms}$ intersecting $B$.
\end{itemize}
For each $s \in \{0,\ldots,\ell\}$, either $R_{s}^{\nu} = 1$ or $R_{s}^{\mu} \geq 2^{(1 - \beta)m}$, and the set $\calS = \{s : R_{s}^{\mu} \geq 2^{(1 - \beta)m}\}$ satisfies
\begin{equation}\label{form116} m|\calS| \geq \log \|\nu\|_{L^{2},\textup{Sh}}^{-2} + \beta \log_{2} \Delta. \end{equation}
\end{thm}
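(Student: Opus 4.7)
The plan. Theorem \ref{shmerkin} is imported as a black box from \cite{Sh}, so I would not attempt to reprove it from scratch. Instead I would summarise the strategy, which in turn builds on Hochman's entropy inverse theorem \cite{Ho}. Conceptually the statement is that if convolution with $\nu$ fails to increase the $L^{2}$ norm of $\mu$ by more than a factor $\Delta^{-\kappa}$, then at every dyadic scale of resolution $m$ one of the two measures must be structurally degenerate --- $\nu$ a near-singleton within each scale-$s$ parent, or $\mu$ nearly uniform among its scale-$m$ children.

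First I would introduce a \emph{multi-scale regularisation}. The norm $\|\cdot\|_{L^{2},\mathrm{Sh}}$ is a discretised $L^{2}$ norm at scale $\Delta = 2^{-\ell m}$, and by iterated pigeonholing over the dyadic hierarchy $\{2^{-sm}\}_{s = 0}^{\ell}$ I would pass to subsets $A \subset \spt \mu$ and $B \subset \spt \nu$ on which the covering numbers $N(\cdot \cap I, 2^{-(s + 1)m})$ are exactly constant across every dyadic parent $I$ of side-length $2^{-sm}$. The losses are polylogarithmic and can be absorbed by picking $m$ large relative to $m_{0}$, so the resulting sequences $(R_{s}^{\mu})_{s}$ and $(R_{s}^{\nu})_{s}$ carry the full structural content of the original measures.

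Next I would establish a \emph{per-scale dichotomy}. Suppose that at some scale $s$ both $R_{s}^{\nu} \geq 2$ and $R_{s}^{\mu} < 2^{(1 - \beta)m}$ hold simultaneously. I would show that convolution with the restriction of $\nu$ to a scale-$s$ interval genuinely spreads $\mu$ by a multiplicative factor $2^{\eta(\beta)m}$ at that scale, for some explicit $\eta(\beta) > 0$. Telescoping these multiplicative gains across all such scales and comparing with $\|\mu\|_{L^{2},\mathrm{Sh}}$ would contradict \eqref{inverseHyp}, provided $\kappa$ is taken small enough in terms of $\beta$ and $m$. The accounting identity \eqref{form116} then falls out by matching log-branchings of $\nu$ to its $L^{2}$ deficit: on scales outside $\mathcal{S}$ the equality $R_{s}^{\nu} = 1$ forces all of the "$L^{2}$ information" of $\nu$ to be concentrated on $\mathcal{S}$, giving the claimed lower bound on $m|\mathcal{S}|$.

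The hard part, and the reason Shmerkin's result is quoted rather than reproved, is the per-scale inverse statement. The required multiplicative gain is a discretised Pl\"unnecke--Ruzsa / sum-product estimate of the same depth as Hochman's entropy inverse theorem: the genuine obstruction is additive-combinatorial content for which no short elementary derivation is known. The $L^{2}$ route taken in \cite{Sh} bypasses some of Hochman's entropy bookkeeping via Plancherel, but the core combinatorial input is of comparable difficulty, and the quantitative dependence of $\kappa$ on $\beta$ is correspondingly weak.
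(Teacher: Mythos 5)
The paper does not prove Theorem \ref{shmerkin}; it imports it verbatim (in reduced form) from \cite[Theorem 2.1]{Sh}, which is exactly what you propose to do. Your summary of Shmerkin's argument is accurate enough for context (aside from a small slip: convolution with a probability measure cannot \emph{increase} the $L^{2}$ norm, so the hypothesis \eqref{inverseHyp} is really a non-\emph{decrease} condition), and since the paper itself offers no proof of this statement, treating it as a cited black box is the correct and only approach.
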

Now, we explain the concepts appearing above. First, for $\Delta \in 2^{-\N}$, a \emph{$\Delta$}-measure is any probability measure in $\calM(\Delta \cdot \Z \cap [-1,1))$. In our case, we will actually be concerned with $\Delta^{p}$-measures, such as $\mu_{h}$. For a $\Delta$-measure $\mu \in \calM(\Delta \cdot \Z \cap [-1,1))$, Shmerkin defines the (non-standard) $L^{2}$-norm
\begin{displaymath} \|\mu\|_{L^{2},\textup{Sh}}^{2} := \sum_{x \in \Delta \cdot \Z \cap [-1,1)} \mu(\{x\})^{2}. \end{displaymath} 
It is easy to see that
\begin{equation}\label{form119} \|\mu\|_{L^{2},\textup{Sh}}^{2} \sim \Delta \cdot \|\mu \ast \psi_{\Delta}\|_{L^{2}}^{2}, \qquad \mu \in \calM(\Delta \cdot \Z \cap [-1,1)). \end{equation} 
Of the measures we are interested in presently, $\mu_{h}$ is already a $\Delta^{p}$-measure, but $\theta \mu_{v}$ is not. However, we can associate to $\theta \mu_{v}$ a $\Delta^{p}$-measure in the following canonical way:
\begin{displaymath} (\theta \mu_{v})' := \sum_{x \in D_{p}} (\theta \mu_{v})([x,x + \Delta^{p})) \cdot \delta_{x}.  \end{displaymath} 
Then, it follows from \eqref{form113} that $(\theta \mu_{v})'(\{x\}) \leq \Delta^{\eta p}$ for $\eta = q_{\textup{spec}}(d - D)/3$, and consequently (noting that $(\theta \mu_{v})'$ is a probability measure)
\begin{displaymath} \|(\theta \mu_{v})'\|_{L^{2},\textup{Sh}}^{2} = \sum_{2^{-j} \leq \Delta^{\eta p}} \mathop{\sum_{x \in D_{p}}}_{2^{-j - 1} < (\theta \mu_{v})'(\{x\}) \leq 2^{-j}} (\theta \mu_{v})'(\{x\})^{2} \leq 2\Delta^{\eta p}. \end{displaymath}
As a technical corollary, noting also that $\Delta^{-p} \geq |D_{p}|/2$, we record that
\begin{equation}\label{form115} \log \|(\theta \mu_{v})'\|_{L^{2},\textup{Sh}}^{-2} \geq \eta \log \Delta^{-p} - 2 = \eta \log |D_{p}| - 3. \end{equation} 
We further record the following consequence of \eqref{form70} and \eqref{form119}:
\begin{equation}\label{form114} \|\mu_{h} \ast (\theta \mu_{v})'\|_{L^{2},\textup{Sh}} \gtrapprox \Delta^{C_{N}\alpha}\|\mu_{h}\|_{L^{2},\textup{Sh}}, \end{equation}  
Then, we apply Shmerkin's inverse theorem to the measures $\mu_{h}$ and $(\theta \mu_{v})'$, for any 
\begin{displaymath} 0 < \beta < \min\{\eta/2,(1 - D)\} = \min\{q_{\text{spec}}(d - D)/6,(1 - D)\}, \end{displaymath}
and for some large $m_{0} \in \N$ to be prescribed in a moment, depending only on $1 - D$ and the constant $C_{E}$ in \eqref{form72}. The inverse theorem then produces the constants
\begin{displaymath} m = m(\beta,m_{0}) \geq m_{0} \quad \text{and} \quad \kappa = \kappa(\beta,m_{0}) > 0. \end{displaymath}
Note that the choice of $\beta$ can be made depending only on $q_{\text{spec}} = 2^{-2N},d - D$ and $1 - D$, and $N$ further only depends on $\epsilon_{0}$ (recall the choice made in \eqref{N}). So, $\kappa$ only depends on $\epsilon_{0}$, $d - D$, $1 - D$ and the constant $C_{E}$ in \eqref{form72}. We may assume that $\Delta^{p}$ has the form
\begin{displaymath} \Delta^{p} = 2^{-\ell m} \quad \text{for some} \quad \ell \in \N. \end{displaymath}
This can be achieved by adding one more requirement for $\delta > 0$ at the start of Section \ref{preliminaries} (instead of asking that $\delta^{g^{-n}} \in 2^{-\N}$ for all $n \lesssim 1/\alpha$, we rather require that $\delta^{g^{-n}} \in 2^{-m\N}$ for the $m$ above, which only depends on $\epsilon_{0},d - D,1 - D$, and the constant $C_{E}$ in \eqref{form72}).

Then, we pick $\alpha$ so small that $C_{N}\alpha < \kappa$ in \eqref{form114}. Then, \eqref{form114} implies -- for $\Delta > 0$ small enough, and finally picking $\tau > 0$ small enough depending on $\alpha,N$ -- that the main hypothesis \eqref{inverseHyp} of Theorem \ref{shmerkin} is valid. It follows from the theorem that \eqref{form116} is valid. Then, combining \eqref{form116} and \eqref{form115}, we find that
\begin{displaymath} m|\calS| \geq \log \|(\theta \mu_{v})'\|_{L^{2},\textup{Sh}}^{-2} + \beta \log \Delta^{p} \geq (\eta - \beta) \log |D_{p}| - 3 \geq \tfrac{\eta}{2} \log |D_{p}| - 3. \end{displaymath}
If $\Delta^{p}$ is small enough, and hence $|D_{p}|$ is large enough, the inequality above implies that $m|\calS| > 0$, and hence $\calS \neq \emptyset$. (Choosing $\Delta^{p}$ small enough depending on $\eta$ is legitimate: recall that $\eta = q_{\mathrm{spec}}(d - D)/3$, and then from Section \ref{refining} that $q_{\mathrm{spec}} = 2^{-2N}$ and $p \in [2^{-N + 2},1]$, where $N$ only depends on $D - d$ and $\epsilon_{0}$. Also, recall from \eqref{nu} and \eqref{form40} that $\Delta = \delta_{n} = \delta^{\kappa}$, where $\kappa > 0$ is a constant depending only on $\alpha,Q$, and $\delta > 0$ is an "initial scale", chosen as early as in Section \ref{preliminaries}. This scale was allowed to depend on all the parameters $\alpha,d,D,\epsilon_{0},Q$. Therefore, we can arrange $\log |D_{p}| \gg 2/\eta$ by choosing $\delta > 0$ initially small enough, depending only on $\alpha,d,D,\epsilon_{0},Q$.) Recalling Theorem \ref{shmerkin}(A), it follows that there exists $s \in \mathcal{S} \subset \{0,\ldots,\ell\}$ such that
\begin{equation}\label{form117} N([\spt \mu_{h}] \cap I,2^{-(s + 1)m}) \geq R_{s}^{\mu} \geq 2^{(1 - \beta)m} \end{equation}
for some dyadic interval $I \subset \R$ of length $\ell(I) = 2^{-ms} \in [\Delta^{p},1]$. On the other hand, by \eqref{form72} applied with $R = 2^{-ms}$ and $r = 2^{-(s + 1)m}$, we find that
\begin{equation}\label{form118} N([\spt \mu_{h}] \cap I,2^{-(s + 1)m}) \lesssim C_{E} \left(\frac{2^{-ms}}{2^{-(s + 1)m}} \right)^{D} = C_{E} \cdot 2^{mD}. \end{equation} 
Finally, since $\beta < 1 - D$, we see that the inequalities \eqref{form117}-\eqref{form118} are incompatible if $m \geq m_{0}$ is sufficiently large (depending on $1 - D$ and $C_{E}$, as promised). We have reached a contradiction, and proved \eqref{form68}, namely that $|e - e_{1}| \leq \Delta^{p - \epsilon_{1}}$, and hence Proposition \ref{prop2}. As explained after \eqref{form47}, this implies the existence of the arc $J_{2} \subset J_{1}$ satisfying \eqref{form99}.

\subsection{Conclusion of the proof}\label{conclusion} We now complete the proof of Theorem \ref{mainTechnical} roughly in the way described in Section \ref{contradictionHeuristic}. We pick any initial arc $J_{1} \subset S^{1}$ of length $\ell(J_{1}) \sim \Delta^{q_{2}}$ and $\sigma(J_{1}) \gtrsim \Delta^{q_{2}}$. Then, we apply \eqref{form99} repeatedly to find a sequence of arcs $J_{1} \supset J_{2} \supset \ldots \supset J_{N - 1}$ with the properties that
\begin{itemize}
\item $\ell(J_{j}) = \Delta^{q_{j + 1}}$ for $1 \leq j \leq N - 1$, and
\item $\sigma(J_{j + 1}) \gtrapprox \Delta^{\epsilon_{1}}\sigma(J_{j})$ for $1 \leq j \leq N - 2$.
\end{itemize}
In particular $J := J_{N - 1}$ is an arc of length $\Delta$ satisfying
\begin{displaymath} \sigma(J) \gtrapprox \Delta^{N\epsilon_{1}} \sigma(J_{1}) \gtrsim \Delta^{N \cdot 2^{-N} \cdot 2^{-N + 2}} = \Delta^{(N + 3) \cdot 2^{-N}}. \end{displaymath}
On the other hand, $\sigma(J) \leq C_{\sigma}\Delta^{\epsilon_{0}}$ by \eqref{frostman}. Recalling from \eqref{N} that $10 N \cdot 2^{-N} < \epsilon_{0}$, we have reached a contradiction, assuming that $\Delta,\tau > 0$ are small enough. The proof of Theorem \ref{mainTechnical} is complete.

\bibliographystyle{plain}
\bibliography{references}

\end{document}